\documentclass[11pt,leqno]{amsart}
\usepackage{graphicx}
\baselineskip=16pt

\usepackage{indentfirst,csquotes}

\usepackage[margin=1in]{geometry}

\usepackage{amssymb,amsthm,amsmath}
\usepackage{xcolor,paralist,hyperref,fancyhdr,etoolbox}
\newtheorem{theorem}{Theorem}
\newtheorem{definition}{Definition}
\newtheorem{example}{Example}
\newtheorem{assumption}{Assumption}
\newtheorem{lemma}{Lemma}
\newtheorem{proposition}{Proposition}
\newtheorem{corollary}{Corollary}

\usepackage{enumitem}
\usepackage{soul}
\usepackage{color-edits}
\usepackage{amstext,amsfonts,amssymb}
\usepackage{graphicx,color}
\usepackage[normalem]{ulem}
\usepackage{algorithm}
\usepackage{algpseudocode}
\usepackage{tikz}
\usepackage{comment}

\usepackage{natbib}
 \bibpunct[, ]{(}{)}{,}{a}{}{,}%
 %
 %
 %
 %
 %

\hypersetup{ colorlinks=true, linkcolor=black, filecolor=black, urlcolor=black }

\usepackage{lipsum}

\begin{document}

\title[Drift Optimization of Stochastic Models]{Drift Optimization of Regulated Stochastic Models\\Using Sample Average Approximation}
\author[Z.Zhou]{Zihe Zhou}
\author[H.Honnappa]{Harsha Honnappa}
\author[R.Pasupathy]{Raghu Pasupathy}

\date{\today}
\address{315 N Grant St, West Lafayette, IN 47907-2023, United States}
\email{zhou408@purdue.edu}
\maketitle

\let\thefootnote\relax
\footnotetext{MSC2020: Primary 90C15, Secondary 93E20, 60H30.} 

\begin{abstract}
This paper introduces a {\it drift optimization} model of stochastic optimization problems driven by regulated stochastic processes. A broad range of problems across operations research, machine learning and statistics can be viewed as optimizing the ``drift'' associated with a process by minimizing a cost functional, while respecting path constraints imposed by a Lipschitz continuous regulator. Towards an implementable solution to such infinite-dimensional problems, we develop the fundamentals of a Sample Average Approximation (SAA) method that incorporates (i) path discretization, (ii) function-space discretization, and (iii) Monte Carlo sampling, and that is solved using an optimization recursion such as mirror descent. We start by constructing pathwise directional derivatives for use within the SAA method, followed by consistency and complexity calculations. The characterized complexity is expressed as a function of the number of optimization steps, and the computational effort involved in (i)--(iii), leading to guidance on how to trade-off the computational effort allocated to optimization steps versus the ``dimension reduction'' steps in (i)--(iii).
\end{abstract} 

\bigskip

\section{Introduction}\label{sec:Intro}
This paper introduces and studies a {\it drift optimization} model of a broad class of stochastic optimization problems driven by regulated stochastic processes. 
A range of problems, spanning operations research, machine learning and statistics, can be modeled as

\begin{align*}\label{opt}
\tag{OPT}
 &~\text{min:}~J(F) := \underset{C \times \mathbb R^d}\int \tilde J\circ\Gamma(z+F)\pi^x(dz) \otimes \mu_0(dx),\\
\nonumber&\text{subject to:}~
F \in \mathcal F,
\end{align*}
where $C$ is the space of $\mathbb{R}^d$-valued continuous functions having compact domain $[0,T]$, and equipped with the norm $\|x\| = \max_{1 \leq i \leq d} \sup_{t \in [0,T]} |x_i(t)|$, the feasible set $\mathcal F$ is a convex, compact subset of $C$, and $\pi^x$ is the path measure corresponding to the $\mathbb{R}^d$-valued stochastic process $Z = (Z_t : t \in [0,T])$ with $Z_0$ having distribution $\mu_0$.  The ``regulator'' $\Gamma : C \to C$ is a Lipschitz continuous function that maps the process $Z + F$ to a subset of $\mathbb{R}^d$ (such as an orthant), and $\tilde J : C \to \mathbb R$ is a real-valued cost functional. 


Observe that (OPT) could be solved as a (stochastic) optimal control problem, typically solved using dynamic programming. This requires the solution of the corresponding Hamilton-Jacobi-Bellman (HJB) equation, and yields ``closed loop'' optimal control policies; for more discussion on this see Section~\ref{sec:prior}.

In contrast, the feasible set in (OPT) is {\it not} closed loop. Adopting a mathematical programming approach, we view (OPT) as a (single-stage) stochastic optimization problem over a path-valued (or infinite dimensional) feasible space. From that perspective, stochastic optimization over Banach space-valued variables is a closely related formalism to the current setting. SAA for finite-dimensional single and multistage stochastic optimization has been extensively studied; therefore, we primarily refer to standard textbooks on this topic~\citep[Chapter 8]{Shapiro_2021}. On the other hand, there is a surprising dearth of studies on SAA over more general spaces, and most of the extant work focuses on PDE constrained optimization. We highlight, in particular, the recent work of~\cite{Milz2023,Milz2023b}, which considers a regularized stochastic optimization problem over an infinite-dimensional reflexive Banach space. These papers establish consistency of the Clarke-stationary points of the corresponding SAA, and the method is applied to a semi-linear PDE constrained optimization problem. On the one hand, drift optimization is more specific because of the focus on additive shifts of the driving stochastic process. On the other hand, the model also allows for general constraint sets, though any eventual duality results will require the definition of constraints through operator (in)equalities. 

\subsection{Examples}
~\eqref{opt} subsumes a wide range of optimization problems, and is best illustrated through the following examples that arise in queueing, probabilistic machine learning, and optimal transport, respectively.

\begin{example}~\label{ex:rbm-opt} Let $\mathcal F = W_0^{1,2}$ be the Sobolev space consisting of $\mathbb R$-valued absolutely continuous functions with $L^2$-integrable derivatives and initial value 0. Let $Z = \sigma B$, where $B$ is a Wiener process with measure $\pi^x$ and $\sigma > 0$, and let $\mu_0 = \delta_0$ be the Dirac measure concentrated at zero.
When $\Gamma$ is chosen to be the Skorokhod regulator map that we later elaborate, the random variable $X^F := \Gamma(Z+F)$ is a so-called {\it reflected Brownian motion} (RBM) with drift $F$. Consider a cost functional over $z \in C$, $z \mapsto \tilde{J}(z):= a_1 \int_0^T g(z(s)) ds + a_2 G(z(T))$, where $(a_1,a_2) \in \mathbb R^2$, $g:\mathbb R \to \mathbb R$ and $G : \mathbb R \to \mathbb R$ are well-defined functions. The corresponding optimization problem with decision variable $F$ defines a class of {``open-loop''} optimal control problems over the Banach space $W_0^{1,2}$, driven by an RBM. This class of problems arises in nonstationary queueing network control, scheduling, and inventory control. See~\cite{armony-atar-honnappa} for a diffusion optimization problem over non-decreasing functions $F$, studied in the context of appointment scheduling problems in queueing systems. 
\end{example}

\begin{example} Consider the \emph{information projection} problem:
    $\inf_{\pi \in \mathcal P} \big\{ \mathbb{E}_{\pi}[\Phi] + D_{KL}(\pi\|\pi_0)\big\},$
    where $\Phi$ is a real-valued functional on $C$, $\pi_0$ is the Wiener measure, $\mathcal{P}$ is the class of Gaussian measures that are equivalent to $\pi_0$, and $D_{KL}(\pi\|\pi_0) = \int \log \frac{d \pi}{d \pi_0} \, d\pi$ is the Kullback-Leibler divergence between $\pi$ and $\pi_0$. The measure $\pi^*$, that is equivalent to $\pi_0$, has density \[\frac{d \pi^*}{d \pi_0} = \frac{e^{-\Phi}}{\mathbb E_{\pi_0}[e^{-\Phi}]}.\] The solution of the information projection problem is the `closest' Gaussian measure to the (possibly non-Gaussian) $\pi^*$.
    Information projection is therefore intimately connected with the Schr\"odinger Bridge problem~\citep{leonard2014survey} and optimal transport~\citep{villani2009optimal}, as well as variational inference methods in computational statistics. \citet[Theorem 1.1]{selk2021information} show that this information projection problem  is equivalent to an `open-loop' (or state independent) KL-weighted control~\citep{bierkens2014explicit} of the form
    \[\inf_{F \in \mathcal{F}} \{\mathbb E_{\pi_0}[\Phi(z + F)] + D_{KL}(\pi_F \|\pi_0)\},\] where $\pi_F$ corresponds to the pushforward of $\pi_0$ under the `shift' $F$. It is immediate that this problem is of the form~(OPT), with $\tilde J(z+F) = \Phi(z+F) + D_{KL}(\pi_F\|\pi_0)$, $\Gamma(x) = x$, and $\mu_0 = \delta_{0}$.
\end{example}

\begin{example}
\cite{mikami2004monge} characterizes the optimal transport map $s^*$ in Monge's problem~\citep{villani2009optimal} through a small noise limit of Doob's $h$-path process (see~\citep[Sec. 39]{rogerswilliams}) with an initial distribution $\mu_0$ and a terminal one $\mu_1$. Specifically, the $h$-path process is the minimizer of 
\begin{align} \label{eq:mikami}
\inf_b \, \mathbb E \left[ \int_0^1 |b(t)|^2 dt \right]
\end{align}
where $b$ is in the class of progressively measureable processes for which the distribution of $X_\epsilon (1) := X_0 + \int_0^1 b(s) ds + \sqrt{\epsilon} B(1)$ is $\mu_1$ and $X_0 \sim \mu_0$. \citep[Theorem 2.2]{mikami2004monge} proves that $X_\epsilon(1) \stackrel{\text{L}^2}{\to} s^*$ as $\epsilon \to 0$, and the optimal drift $b^*_\epsilon$ (roughly speaking) approximates $s^*(X_0) - X_0$ in the L$^2$ sense as $\epsilon \to 0$. ~\citep[Theorem 1.1]{selk2021information} shows that the optimization problem in~\eqref{eq:mikami} can be upper bounded by an information projection of the small noise Brownian motion measure onto the class of Gaussian shift measures, which is equivalent to a drift optimization problem as described in Example~2.
\end{example}

\subsection{Contributions}
Our proposed solution estimator for the infinite-dimensional problem~\eqref{opt} involves constructing and solving the following finite-dimensional SAA counterpart. 

\begin{align}\label{sampathpbpre}
~& \mbox{minimize:} \quad J_{N,h}(F) := \frac{1}{N} \sum_{j=1}^N \tilde{J}\circ \Gamma(Z_{h,j}+F), \nonumber\\ 
~& \mbox{subject to:} \quad F \in \mathcal{F}_n, \tag{MC-$n$-OPT}\\
~&\text{where}\quad Z_{h,j}\overset{\scriptsize \mbox{iid}}{\sim} \pi^x_{h}. \nonumber 
\end{align} Observe that the SAA counterpart in~\eqref{sampathpbpre} accrues errors due to three levels of finite-dimensionalization: (i) a ``path-discretized'' measure $\pi^x_h$ that approximates $\pi^x$; (ii) Monte Carlo samples $Z_{h,j}, j =1,2, \ldots, N$ from $\pi^x_h$ used to approximate the objective; and (iii) a finite-dimensional feasible region $\mathcal{F}_n$ that approximates the true feasible region $\mathcal{F}$. Furthermore,~\eqref{sampathpbpre} cannot be solved to infinite precision, and entails the use of an appropriate optimization routine, resulting in the fourth source of error due to optimization. 

As we detail below, our contributions all directly or indirectly relate to the construction of a solution to~\eqref{sampathpbpre}, or to the assessment of the solution to~\eqref{sampathpbpre} by analyzing the errors in (i)--(iii) along with the optimization error. 

\begin{enumerate}[leftmargin=*]
	\item (Pathwise Derivative) Our first result in Lemma~\ref{lem:chain-rule} derives the directional derivative of the pathwise cost functional $\tilde J \circ \Gamma$ in (OPT). In the setting where $\Gamma$ is the Skorokhod regulator map, our principal result is Proposition~\ref{prop:Sko-pathwise}, in which we first establish the pathwise directional derivative of the Skorokhod regulator at each time point $t \in [0,T]$. This result has been established in the applied probability literature, first in the scalar setting by~\cite{MM} who developed it in the context of a time-varying Markovian queueing model (see also~\cite{Whitt_2002,Honnappa2015}in the setting of non-Markovian queues), and in the multivariate setting by~\cite{MR}. The directional derivative of the Skorokhod regulator was also studied in~\cite{Lipshutz_2018,Lipshutz_2019} in convex polyhedra. 
    
    Our contribution is to derive the directional derivative as a consequence of Danskin's Theorem~\cite[Theorem 7.21]{sha04} from convex analysis. Danskin's Theorem is typically used to solve minimax problems in optimization, and this connection between minimax optimization and the directional derivative of the Skorokhod regulator map in stochastic processes is novel. Besides simplifying and clarifying the derivation of the directional derivative, we believe that our lemma also points to the deeper connection between regulation of stochastic processes and minimax optimization. 

    Building on Lemma 1, we then show that the pathwise directional derivative is an unbiased estimator of the directional derivative of the expected cost functional, in Theorem~\ref{thm:objective-derivative}. This result parallels the sensitivity analysis of regulated diffusions in~\cite{Lipshutz1}, who assume that the drift rate function is parameterized. Importantly, we establish this derivative interchange result without discretizing the domain or reverting to standard arguments for derivative interchange for discrete-time stochastic processes (see~\cite{glasserman1990gradient}, for instance). 

\item (Consistency) Along the way to establishing the consistency of the solution to~\eqref{sampathpbpre}, we consider the intermediate \emph{functional SAA} problem:
\begin{align}~\label{saaformulation}~\tag{MC-OPT}
    \text{minimize}&~\frac{1}{N} \sum_{i=1}^N \tilde J \circ \Gamma (Z_i + F),\\
    \text{subject to:}&~F \in \mathcal F, \nonumber
\end{align}
where $N$ is the sample size and $\{Z_i : i = 1,\ldots, N\}$ are i.i.d. samples from $\pi^x$. Our approach first establishes a uniform equiconvergence result over function spaces in Proposition~\ref{thm:equiconv} by showing that the Gaussian complexity~\citep{bartlett2002rademacher} of the SAA estimator of the objective is  proportional to $N^{-1/2}$ (for every $N$), assuming the diameter of the constraint set $\mathcal F$ is bounded. As a direct consequence, we obtain the consistency result in Theorem~\ref{thm:equiconv2}. Again, these results are obtained without appealing to any premature discretization of the path space and therefore represent a generalization of the standard results in the finite-dimensional SAA literature. 

\item The samples $\{Z_j, j \geq 1\}$ (MC-OPT) generally cannot be sampled directly from $\pi^x$. For instance, if $Z_1$ is a Brownian motion, its sample paths can be approximated using Euler-Maruyama or Euler-Milstein schemes~\citep{Asmussen_2007}. In other words, the problem in~\eqref{saaformulation} is ``fictitious'' from the standpoint of computation and a further approximation to~\eqref{saaformulation} is necessary for implementation. Furthermore, since $\mathcal{F}$ might be infinite-dimensional,~\eqref{saaformulation} must be optimized (only) over a finite-dimensional subspace of the constraint set $\mathcal{F}$ to allow computation using a method such as gradient descent. Accounting for such sources of error naturally results in the {\it finite SAA} problem seen in~\eqref{sampathpbpre}. 

Our third contribution pertains directly to~\eqref{sampathpbpre}, and appears as a  convergence rate result (Theorem~\ref{thm:saarate}) that accounts for all sources of error. In particular, Theorem~\ref{thm:saarate} quantifies the expected decay rate of the true optimality gap of a solution obtained by executing \emph{mirror descent} on a finite-dimensional approximation of~\eqref{saaformulation} generated using approximations to $\{Z_j, j \geq 1\}$. The resulting rate clarifies the relationship among four sources of error: (i) numerical optimization error due to the use of an iterative scheme such as mirror descent; (ii) Monte Carlo sampling error;  (iii) path approximation error due to ``time'' discretization; and (iv) projection or approximation error due to the use of a finite-dimensional subspace in lieu of $\mathcal{F}$. 

Theorem~\ref{thm:saarate} naturally leads to a computational effort allocation problem posed in the service of deciding how much relative effort is to be expended toward nullifying the various sources of error, in Section~\ref{sec:allocate}. Our findings reveal that the optimal effort allocation depends on the smoothness of the function space: rougher spaces demand greater investment in function approximation, while smoother spaces permit a larger focus on sampling and optimization.
\end{enumerate}



\subsection{Roadmap} The rest of the paper is organized as follows. In Section~\ref{sec:prior} we review existing (stochastic) optimal control formalisms that could be used to solve (OPT), when/if viewed as a control problem. In Section~\ref{sec:def} we consolidate all of the key definitions, assumptions and notations that will be used throughout the paper. 
Following this, we derive the pathwise directional derivative and the interchange result, in Section~\ref{sec:dir-der}. Section~\ref{sec:equi} presents our equiconvergence and consistency result for the functional SAA~\eqref{saaformulation}. Our rate of convergence result for the finite SAA~\eqref{sampathpb} is presented in Section~\ref{sec:rate}. Leveraging this rate of convergence result, we then derive an (asymptotically) optimal fixed budget allocation for solving~\eqref{sampathpb} in Section~\ref{sec:allocate}. Finally, we conclude in Section~\ref{sec:conc} with remarks on future directions. 

\section{Positioning and Prior Art}~\label{sec:prior}
As observed in Section~\ref{sec:Intro}, we view the drift optimization problem (OPT) as a mathematical program to be solved over an infinite dimensional feasible space, and have related (OPT) and the related SAA (MC-OPT) and (MC-n-OPT) with existing SAA models over function spaces. To further position our work in the literature, we will recall a number of optimal control formalisms (each developed in separate research community's) and compare them with the drift optimization model. 

First, we contend that the following three characteristics of (OPT) distinguish it from existing optimal control formalisms:
\begin{enumerate}
    \item[(i)] the optimization variable $F$ is a function that additively changes or ``shifts'' the sample paths of the stochastic process $Z$, and need not depend on the process $Z$ in any way;
    \item[(ii)] the feasible set of functions is merely assumed is a convex subset of the class of continuous functions $C$, and therefore places no smoothness or other regularity conditions; and
    \item[(iii)] the formalism includes regulated stochastic processes, which is typically not of consideration in many existing formalisms.
\end{enumerate}

\subsection{Mean field control.} The {\it mean field control} formalism, defined in a series of papers~\citep{E1,E2,E3}, focuses exclusively on ordinary differential equation (ODE) forward system dynamics with randomized initial and terminal conditions. The setting in these papers assumes the existence of a coupled pair of random variables $(X_0,Y_0)$ with distribution measure $\mu_0$, and the objective is to compute the optimal control function $\Theta := (\theta^*_t: t \in [0,T])$ by solving the minimization problem
\begin{align*}
    \inf_{\Theta \in L^\infty([0,T],\mathbb R^d)} &~\mathbb E_{\mu_0} \left[ G(X^\theta_T,Y_0) + \int_0^T g(X^\theta_t,\theta_t) dt \right],\\
    \text{subject to:} &~dX_t^\theta = f(t,X_t^\theta,\theta_t) dt,~X^\theta_0 \stackrel{D}{=} X_0.
\end{align*}
This problem was suggested as a plausible control-theoretic formalism of deep learning training. We will not go into the details here, but direct the interested reader to~\cite{E1} where the ideas were first laid out. In nearly contemporaneous work by~\cite{chen2018neural}, the controlled and randomized forward dynamics above were developed as an asymptotic approximation to ResNets and referred to as neural ordinary differential equations (NODE). Here, we observe that the randomization of the controlled differential equation is entirely due to the random initial condition. The minimization problem above is typically solved using an SAA reformulation. Furthermore, one can fully recover the mean field control setting by viewing the solution space of NODE (parameterized by $\Theta$) as a feasible space of drift functions $\mathcal F$. This is achieved by setting the path measure as $\pi^x(dz) = \delta_0(dz)$ (i.e., the Dirac measure concentrated at the zero path) and removing the Skorokhod regulator $\Gamma$, allowing (OPT) to encompass the mean field control setting described above under an appropriate choice of the cost functional $\tilde J$. 

\subsection{Uncertain optimal control.} Closely related to the mean field control formulation above is the class of {\it uncertain optimal control problems} (UOCPs), introduced by ~\cite{PHELPS20142987,Phelps_2016} and subsequently solved using SAA. The UOCP is a stochastic optimization problem of the form
\begin{align*}
	\text{minimize} &~\mathbb E[G(X^\eta_T(\omega),\omega)],\\
	\text{subject to:} &~\dot{X}^\eta_t = b(X^\eta_t(\omega),u_t,\omega),\\&~X_0^\eta = x,\text{and}~\eta=(x,u),
\end{align*}
where $\eta$ is the optimization variable and $\omega$ is a random variable. In a UOCP, the state process $X^\eta$ is determined as the solution of the randomized ODE in the constraint. Note that the mean field control problem differs from the UOCP by assuming a fixed probability distribution measure over the initial conditions of the forward dynamics, while the initial conditions are optimized in the case of the UOCP. 

Observe that both the mean field and UOCP formalisms yield optimal control functions that are state-independent, akin to the drift optimization formalism. On the other hand, they only consider smooth forward dynamics, and do not consider stochastic process models of the dynamics.  {\it Ensemble control}, next, represents a natural first foray into problems with stochastic dynamics.


\subsection{Ensemble control.} Ensemble control~\citep{brockett,brockettb,Bartsch,MR3880221,Fleig} is a formalism that considers optimal control of an indexed collection of density  or ensemble functions $(f(t,\cdot): t \geq 0)$ that evolve according to a Fokker-Planck equation, by solving
\begin{align*}
    ~&\text{minimize}~J(f,u)\\
    ~&\text{subject to:}\\
    ~&\frac{\partial}{\partial t}f(t,x) + \frac{\partial}{\partial x}b(t,x,u(t)) -\frac{1}{2} \frac{\partial^2}{\partial x^2} f(t,x) = 0.
\end{align*}
Here, $u(t)$ is the optimization variable, and the constraints are the Fokker-Planck equation governing the evolution of the density function; note that we have written this for a scalar problem, but the formalism can be generalized. A clean interpretation of this formalism is obtained when one considers that the control is imposed on an entire population/ensemble rather than a single sample path. Under reasonable regularity conditions, there exists a (controlled) Markov diffusion process whose marginals correspond to the Fokker-Planck equation in the constraint (see~\citep{Mikami1999} and~\cite[Sec. 3.2.1]{MikamiBook}). In this sense, the ensemble control formalism parallels drift optimization wherein the drift influences every sample path in the same way (and hence the entire ensemble). However, we observe that standard ensemble control does not consider boundary conditions on the Fokker-Planck. 

\subsection{Stochastic optimal rate control.}~\label{sec:drc} Suppose the class of drift functions $\mathcal F$ consists of absolutely continuous functions. Then, there exists a locally integrable function $(f(t): t \geq 0)$ such that $F(t) = \int_0^t f(s) ds$. For example, if $f(t) = -\theta t$ for all $t > 0$ with $\theta \in \mathbb R_+$, and $\sigma(t) \equiv \sigma \in \mathbb R_+$ then $X^F(t)$ is a $(\theta,\sigma)$ reflected Brownian motion with drift, as defined in~\cite{Harrison_2013}. In this setting, for small $h > 0$ the instantaneous state of the diffusion, $X^F(t+h) - X^F(t),$  is controlled/influenced by the locally integrable drift rate function $f(t) h$, and drift optimization (OPT) can be equivalently viewed as a stochastic optimal control problem over the class of `open loop' control functions $f$. More generally, when $f$ is an adapted process, this problem has been studied under the rubric of {\it drift rate control} as defined in~\cite{Ata_2005,ata2024drift}, and solved using classical stochastic optimal control techniques. Formally, in the scalar setting, the finite horizon drift rate control problem is given by
\begin{align*}
    \text{minimize}~&~\mathbb E\left [ \alpha_1 \int_0^T g(f(X_t)) dt + \alpha_2 L(T) \right],\\
    \text{subject to:}~&~ dX_t = f(X_t) dt + dZ_t + L(t),~X(0) = x,
\end{align*}
where $L(t) := \sup_{0 \leq s \leq t} (-(x + \int_0^s f(X_r) dr + \int_0^s dZ_r))_+$, $(a)_+ := \max\{a,0\}$, and $x \in \mathbb R$ is fixed. In particular, recent work in~\cite{ata2024drift,ata2024singular} has focused on numerically solving the corresponding Hamilton-Jacobi-Bellman (HJB) equations. However, there is no existing work on using mathematical programming methods to solve for the minimal drift rate control. 

\paragraph{Remark 1.} The sample paths of the forward dynamics in both the mean field and UOCP formalisms are necessarily smooth functions, and therefore differ from (OPT), where the forward dynamics have the regularity of the driving stochastic process $Z$. In Section~\ref{sec:doss-sussmann}, we show that (OPT) can be expressed in a form similar to a UOCP when $Z$ is an Ito process, albeit constrained by a differential equation whose solutions are `rough'. 

\paragraph{Remark 2.} Observe that these control problem formalisms all assume that the objectives are constrained by the forward state dynamics --  both the mean field and UOCP model the forward dynamics by ODEs, ensemble control by the Fokker-Planck PDE, and drift rate control by an SDE. In contrast, (OPT) is constrained by the feasible set of drift functions, $\mathcal F$, and pushes the stochasticity entirely to the path measure $\pi^x$. This is a key fundamental difference between the control formalism's and the drift optimization formalism, and dictates the method of solution. In Section~\ref{sec:doss-sussmann} of the appendix, we reformulate drift optimization as an optimal control problem. 

\paragraph{Remark 3.} We believe that in many operational and managerial settings, drift optimization, in particular, provides a more natural formalism to use. Indeed, in the control formulations, the well-posedness of the system dynamics requires strict regularity conditions on the control and drift rate functions, which may render the models inappropriate for a given problem setting. On the other hand, we only require the drift functions $F \in \mathcal F$ to be continuous (and could be as ``rough'' as Brownian motion, for instance). In particular, the stochastic optimal control setting, for instance in the drift rate control formalism in Section~\ref{sec:drc}, is really only applicable in `high frequency' settings where the the control and state changes are happening at the same time scale. On the other hand, in Example~\ref{ex:rbm-opt}, for instance, the control and system state `change' on completely different time scales, and drift optimization is a more appropriate mechanistic model.  

\section{Key Definitions and Notation}~\label{sec:def}
In the definitions that follow, the space $C$ is the \emph{normed} space of $\mathbb{R}^d$-valued continuous functions. For more on convex functionals defined over normed spaces, see~\cite{1997lue}, and~\cite{2006nicper}. 

\begin{definition}[Linear Functionals] $L: C \to \mathbb{R}$ is called a linear functional on the (real) normed space $C$  if $L(\alpha F) = \alpha L(F), \alpha \in \mathbb{R}$ and $L(F_1 + F_2) = L(F_1) + L(F_2), F_1, F_2 \in C.$ A linear functional $L:C \to \mathbb{R}$ is said to be a \emph{bounded linear functional} if $\|L\| := \sup\left\{|L(F)|: \|F\|=1, F \in C\right\} < \infty.$ It can be shown that $L: C \to \mathbb{R}$ is a bounded linear functional if and only if $L$ is continuous on $C$,  and that continuity of $L$ at any point $F_0 \in C$ implies boundedness of $L$. 
\end{definition}

\begin{definition}[Dual Space, Adjoint Space, Conjugate Space]
The space $C^*$ of \emph{bounded} linear functionals on $C$ is called the \emph{dual space} of $C$. 
\end{definition}

\begin{definition}[Dual Norm] The operator norm of the functional $T \in C^*$ is called the \emph{dual norm} or \emph{conjugate norm} of $T$: $\|T\|_* := \sup\left\{\frac{|Tz|}{\|z\|}: z \in C, z \neq 0\right\} = \sup\left\{|Tz|: z \in C, \|z\| =1\right\}.$
\end{definition}

\begin{definition}[Right and Left Directional Derivatives]\label{def:dirder} The \emph{right directional derivative} $J'_+(F,v)$ and the \emph{left directional derivative} $J'_{-}(F,v)$ of the functional $J: C \to \mathbb{R}$ at the point $F \in C$, and along direction $v \in \mathcal{C}$, are given by
$J'_+(F,v) := \lim_{t \to 0^+} \frac{1}{t}\left(J(F+tv) - J(F)\right)$ and
$J'_{-}(F,v) := \lim_{t \to 0^{-} } \frac{1}{t}\left(J(F+tv) - J(F)\right).$
\end{definition}

\begin{definition}[G\^{a}teaux and Fr\'{e}chet Differentiability]\label{def:gatfre} The functional $J: C \to \mathbb{R}$ is \emph{G\^{a}teaux differentiable} at $F \in C$ if the following limit exists for each $v \in C$:
\begin{equation}\label{defn:gateaux} S_J(F)(v) := \lim_{t \to 0} \frac{1}{t} \left( J(F+tv) - J(F)\right).\end{equation}
 The functional $J: C \to \mathbb{R}$ is \emph{Fr\'{e}chet differentiable} if the limit in~\eqref{defn:gateaux} holds uniformly in $v$, meaning $$| J(F+v) - (J(F) + S_J(F)(v))| = o(\|v\|), \quad v \in C.$$ From Definition~\ref{def:dirder} and Defintion~\ref{def:gatfre}, we see that Fr\'{e}chet differentiability implies G\^{a}teaux differentiability, which in turn guarantees the existence of  directional derivatives. Also, if $C$ is finite-dimensional and $J$ is Lipschitz in some neighborhood of $F \in C$, then $J$ is Fr\'{e}chet differentiable at $F$ if and only if it is G\^{a}teaux differentiable at $F$.
\end{definition}

\begin{definition}[Subgradient and Subdifferentials of a Convex Functional]\label{def:subgradsubdiff} 
The functional $J: C \to \mathbb{R}$ is \emph{convex} if for any $\alpha \in [0,1]$, it satisfies $J(\alpha F_1 + (1-\alpha)F_2) \leq \alpha J(F_1) + (1-\alpha)J(F_2), \quad \forall F_1,F_2 \in C$. $\tilde{S}_J(F_0) \in C^*$ is called a \emph{subgradient} to $J$ at $F_0 \in C$ if 
\begin{align}\label{eq:subgrad}
  J(F) \geq J(F_0) + \tilde{S}_J(F_0)(F-F_0).  
\end{align}
The set $\partial J(F_0)$ of subgradients to $J$ at $F_0$ is called the \emph{subdifferential} of $J$ at $F_0$. Convex functionals have a subdifferential structure in the sense that if $J:C \to \mathbb{R}$ is convex, then $\partial J(F_0) \neq \emptyset$ for each $F_0 \in C$; conversely,  if $\partial J(F) \neq \emptyset$ for each $F \in C$, then $J$ is necessarily a convex functional.
\end{definition}

\begin{definition}[Mirror Map] \label{def:mirror} Suppose $\bar{\mathcal{D}} \supset C$ and $\mathcal{D} \cap C \neq \emptyset$. A map $\psi: \mathcal{D} \to \mathbb{R}$ is called a \emph{mirror map} if it satisfies the following three conditions: \begin{enumerate} \item $\psi$ is Fr\'{e}chet differentiable and strongly convex in $\mathcal{D}$; \item for each $y \in C^*$, there exists $F \in C$ such that $\nabla \psi (F) = y$; and \item $\lim_{F \to \partial \mathcal{D}} \|\psi(F)\|_* = +\infty,$ \end{enumerate}
\end{definition} where $F \to \partial \mathcal{D}$ is to be understood as $\mbox{dist}(F,\mathcal{D}) := \inf\{\|F-y\|: y \in \mathcal{D}\} \to 0.$

\subsection{Key Assumptions}
We now list key assumptions on the cost functional $\tilde J : C \to \mathbb R$ appearing in~\eqref{opt}, which will be used in the subsequent results. Recall that $\mathcal F$ is a convex, compact subset of $C$. 

%
\begin{assumption}~\label{ass:tildeJ}
    The cost functional $\tilde J : C \to \mathbb R$ is G\^{a}teaux differentiable.
\end{assumption}

\begin{assumption}~\label{ass:LipschitzinZ}
    The cost functional $\tilde J$ is $\kappa$-Lipschitz in $z \in C$, i.e., for any $F\in\mathcal F$ and $z,z' \in C$, we have $        |\tilde J(z+F)-\tilde J(z'+F)|\leq \kappa\|z-z'\|_{\infty}.$
\end{assumption}

\begin{assumption}~\label{ass:LipschitzinF}
    For a fixed path $z\in C\sim\pi^x$,  $|\tilde{J}(z+F_1) - \tilde{J}(z+F_2)| \leq K_{z} \|F_1-F_2\|_\infty,$ where $K_{z} > 0$ for every  $F_1, F_2 \in \mathcal F$, and {$\mathbb E[K_{z}^p] < +\infty$} for some $2 \leq p < +\infty$.
\end{assumption}
Observe that Assumption~\ref{ass:LipschitzinF} does not follow from Assumption~\ref{ass:LipschitzinZ}. While the latter concerns fixed ``shift'' functions, the former explicitly concerns the effects of two different shifts on the same sample path $z$.
\begin{assumption}
    The sample path objective $\tilde J : C \to \mathbb R$ is convex. 
\end{assumption}

For simplicity, we adopt the following assumption throughout the paper.
\begin{assumption}\label{ass:fix_mu0}
    The measure $\mu_0$ is a Dirac measure concentrated at $x \in \mathbb R^d$.
\end{assumption}
This will entail no loss of generality in our main results (which concern the SAA primarily), but will help keep the narrative and notation simple.

A straightforward implication of this condition is that the objective in expectation, $J(F) = \int_C \tilde J \circ \Gamma(z+F) \pi^x(d\,z)$, is also convex. 

\begin{assumption}\label{ass:GammaLipschitz}
    The regulator function $\Gamma:C\rightarrow C$ is $L_\Gamma$-Lipschitz on $C$. 
\end{assumption}
This assumption is easily satisfied by the Skorokhod regulator which is $2$-Lipschitz continuous in the space $C$. As a direct consequence of Assumption~\ref{ass:LipschitzinZ} and Assumption~\ref{ass:GammaLipschitz}, the composed operator $\tilde{J}\circ\Gamma(z + F))$ is $\kappa L_\Gamma$ Lipschitz in $z$ for a fixed $F$. Similarly, by Assumption~\ref{ass:LipschitzinF} and Assumption~\ref{ass:GammaLipschitz}, $\tilde{J}\circ\Gamma(z + F))$ is $K_{z} L_\Gamma$ Lipschitz in $F$ for a fixed sample path $z$.

\begin{assumption}~\label{ass:gamma}
    The regulator function $\Gamma$ is G\^{a}teaux differentiable.
\end{assumption}


\begin{assumption}~\label{ass:composed} The cost functional $\tilde J : C \to \mathbb{R}$ is integrable, that is,
\(
    \int_C \left|\tilde{J}(z)\right| d\pi^x(z) < +\infty.
\)\end{assumption}

\begin{assumption}~\label{ass:covering}
The covering number of the space $\mathcal F$ satisfies $\log N(\epsilon,\mathcal{F},\|\cdot\|_{\infty}) \leq \epsilon^{-1/\alpha}$ for some $\alpha > 1$ and $\epsilon > 0$.
\end{assumption}

\section{Unbiased Derivative Estimator}~\label{sec:dir-der}
As observed in Section~\ref{sec:Intro}, our strategy to solve the problem in~\eqref{opt} involves multiple levels of approximation, the first of which approximates the integral in~\eqref{opt} through Monte Carlo:
\begin{align}~\label{saafunctional}~\tag{MC-OPT}
    \text{minimize}&~J_N(F):=\frac{1}{N} \sum_{i=1}^N \tilde J \circ \Gamma (Z_i + F),\\
    \text{subject to:}&~F \in \mathcal F, \nonumber
\end{align}
where $N$ is the sample size and $\{Z_i : i = 1,\ldots, N\}$ are i.i.d. samples from the distribution $\pi^x$. The question we consider in this section pertains to the directional derivative of the pathwise cost functional appearing in~\eqref{saafunctional}. 

We demonstrate that the directional derivative of the pathwise cost functional is an unbiased estimator of the expected cost functional $J$. We start with the general setting in Assumption~\ref{ass:tildeJ} and Assumption~\ref{ass:gamma}. Let $Y^F = Z+F$, where $F \in \mathcal F$ and $Z \sim \pi^x$ with $x\in\mathbb R^d$. Let $\pi_F^x$ represent the path measure corresponding to $Y^F$. For notational simplicity, we will use $Y$ in place of $Y^F$. 
For any path $y$ sampled from $\pi_F^x$, let $D_u\tilde{J}\circ\Gamma(y)~(\equiv (D_u \tilde J \circ \Gamma) (y))$ represent the directional derivative of $\tilde{J}\circ\Gamma(y)$ with respect to $y$ along the direction $u\in C$.
\begin{lemma}~\label{lem:chain-rule}
     Suppose $\tilde{J}$ satisfies Assumption~\ref{ass:tildeJ}, then  $D_u\tilde{J}\circ\Gamma(y)$ exists and satisfies
     \(
         D_u\tilde{J}\circ\Gamma\,(y) = \left(D_{D_u\Gamma(y)}\tilde{J} \right)\,(\Gamma(y)),
    \)
     where $D_u\Gamma(y)$ is the directional derivative of the regulator map.
\end{lemma}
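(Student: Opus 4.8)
The statement is a chain rule for directional derivatives of the composition $\tilde J \circ \Gamma$, so the natural plan is to expand the difference quotient defining $D_u(\tilde J\circ\Gamma)(y)$ and interpolate through $\Gamma(y)$. First I would write, for $t>0$ small,
\[
\frac{1}{t}\bigl(\tilde J(\Gamma(y+tu)) - \tilde J(\Gamma(y))\bigr).
\]
By Assumption~\ref{ass:gamma}, $\Gamma$ is G\^ateaux differentiable at $y$ along $u$, so $\Gamma(y+tu) = \Gamma(y) + t\, D_u\Gamma(y) + r(t)$ where $\|r(t)\| = o(t)$ in the sup-norm on $C$. Substituting, the difference quotient becomes $\tfrac1t\bigl(\tilde J(\Gamma(y) + t\,D_u\Gamma(y) + r(t)) - \tilde J(\Gamma(y))\bigr)$, and I want to pass to the limit $t\to 0^+$ and recognize $(D_{D_u\Gamma(y)}\tilde J)(\Gamma(y))$.

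The key steps, in order, would be: (1) record the G\^ateaux expansion of $\Gamma$ with the $o(t)$ remainder $r(t)$; (2) split the difference quotient as
\[
\frac1t\bigl(\tilde J(\Gamma(y)+tw+r(t)) - \tilde J(\Gamma(y)+tw)\bigr) + \frac1t\bigl(\tilde J(\Gamma(y)+tw) - \tilde J(\Gamma(y))\bigr),
\]
where $w := D_u\Gamma(y)$; (3) show the second term converges to $(D_w\tilde J)(\Gamma(y))$ directly by Assumption~\ref{ass:tildeJ} (G\^ateaux differentiability of $\tilde J$ at the point $\Gamma(y)$ along $w$, i.e.\ the existence of $S_{\tilde J}(\Gamma(y))(w)$); (4) show the first term vanishes. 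For step (4) I would lean on the Lipschitz hypothesis: under Assumption~\ref{ass:LipschitzinZ}, $|\tilde J(\Gamma(y)+tw+r(t)) - \tilde J(\Gamma(y)+tw)| \le \kappa\|r(t)\| = o(t)$, so dividing by $t$ sends it to $0$. (One can alternatively reduce the needed Lipschitz control to a neighborhood of $\Gamma(y)$.) Combining (3) and (4) and noting the argument is symmetric for $t\to 0^-$ gives existence of $D_u(\tilde J\circ\Gamma)(y)$ and the claimed identity.

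The main obstacle is step (4): controlling the contribution of the remainder $r(t)$. G\^ateaux differentiability of $\tilde J$ by itself only gives a one-directional limit and says nothing about how $\tilde J$ behaves when its argument is perturbed by a lower-order but not-exactly-collinear term $r(t)$; this is exactly where the extra Lipschitz regularity (Assumption~\ref{ass:LipschitzinZ}, together with Assumption~\ref{ass:GammaLipschitz} to ensure the composition inherits it) does the real work, since it converts the sup-norm bound $\|r(t)\| = o(t)$ into an $o(t)$ bound on the functional values. If one did not want to invoke Lipschitzness, the statement would generally fail, so I expect the proof to explicitly flag that Assumption~\ref{ass:tildeJ} alone is not enough and that the Lipschitz assumptions already in force are what make the interpolation legitimate. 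Everything else—linearity of the limit, the $t\to 0^-$ case, and well-definedness of $w\in C$ so that $(D_w\tilde J)(\Gamma(y))$ makes sense—is routine.
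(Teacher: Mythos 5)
Your proof is correct, and it is in fact more careful than what the paper provides: the paper offers no argument at all for this lemma, simply declaring it ``a mere restatement of the chain rule for G\^{a}teaux differentiable functions'' that ``requires no further elaboration.'' Your proposal rightly points out that this dismissal is too quick --- the chain rule does \emph{not} hold for general compositions of G\^{a}teaux differentiable maps, precisely because the remainder $r(t)$ in the expansion $\Gamma(y+tu)=\Gamma(y)+t\,D_u\Gamma(y)+r(t)$ is $o(t)$ in norm but need not point along the direction $D_u\Gamma(y)$, and G\^{a}teaux differentiability of $\tilde J$ only controls perturbations along a fixed ray. Your step (4), invoking the $\kappa$-Lipschitz property of $\tilde J$ (Assumption~\ref{ass:LipschitzinZ}) to convert $\|r(t)\|=o(t)$ into an $o(t)$ bound on $|\tilde J(\Gamma(y)+tw+r(t))-\tilde J(\Gamma(y)+tw)|$, is exactly the missing ingredient, and is the standard way one upgrades G\^{a}teaux to (the needed consequence of) Hadamard differentiability for Lipschitz maps. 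The only point worth flagging explicitly is that $w=D_u\Gamma(y)$ must lie in $C$ for $(D_w\tilde J)(\Gamma(y))$ to be well defined; this is implicit in Assumption~\ref{ass:gamma} (G\^{a}teaux differentiability of $\Gamma$ as a map $C\to C$), so your argument closes. In short: your route is the honest proof of a statement the paper asserts without justification, and your observation that Assumption~\ref{ass:tildeJ} alone would not suffice is a genuine improvement on the paper's framing, which cites only that assumption in the lemma's hypothesis.
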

This is a mere restatement of the chain rule for G\^{a}teaux differentiable functions and requires no further elaboration. The next theorem shows that the pathwise derivative is an unbiased estimator.

\begin{theorem}~\label{thm:objective-derivative}
    Fix $x \in \mathbb R^d$. The pathwise directional derivative of the pathwise cost functional is an unbiased estimator of the directional derivative of the expected cost functional. That is
    $$\mathbb E\left[ D_u \tilde J \circ \Gamma (Y) \right] = D_u J(F).$$
\end{theorem}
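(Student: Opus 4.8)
The plan is to establish the derivative interchange
\[
\mathbb E\left[ D_u \tilde J \circ \Gamma (Y) \right] = \lim_{t \to 0} \frac{1}{t}\left( \mathbb E\left[ \tilde J \circ \Gamma(Y + tu) \right] - \mathbb E\left[ \tilde J \circ \Gamma(Y) \right] \right) = D_u J(F),
\]
where $Y = Z + F$, $Z \sim \pi^x$, so that $J(F+tu) = \mathbb E[\tilde J \circ \Gamma(Z + F + tu)] = \mathbb E[\tilde J \circ \Gamma(Y + tu)]$. The only real work is justifying the exchange of limit and expectation, which I would do by a dominated-convergence argument. First I would fix a sample path $y$ (equivalently, fix $z$, with $y = z + F$): by Lemma~\ref{lem:chain-rule}, under Assumption~\ref{ass:tildeJ} and Assumption~\ref{ass:gamma}, the pathwise directional derivative $D_u \tilde J \circ \Gamma(y)$ exists and equals $(D_{D_u\Gamma(y)}\tilde J)(\Gamma(y))$, so the pointwise limit of the difference quotients $t^{-1}(\tilde J \circ \Gamma(y + tu) - \tilde J \circ \Gamma(y))$ exists as $t \to 0$ for $\pi^x$-a.e.\ $z$.

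Next I would produce an integrable dominating function for the difference quotients, uniformly in $t$ in a neighborhood of $0$. For a fixed sample path $z$, write $F_1 = F + tu$ and $F_2 = F$; both lie in (a bounded neighborhood of) $\mathcal F$ for $|t|$ small, and Assumption~\ref{ass:LipschitzinF} together with Assumption~\ref{ass:GammaLipschitz} gives
\[
\left| \frac{1}{t}\left( \tilde J \circ \Gamma(z + F + tu) - \tilde J \circ \Gamma(z + F) \right) \right| \le \frac{1}{|t|}\, K_z L_\Gamma \, \|tu\|_\infty = K_z L_\Gamma \|u\|_\infty,
\]
and by Assumption~\ref{ass:LipschitzinF} we have $\mathbb E[K_z^p] < \infty$ for some $p \ge 2$, hence $\mathbb E[K_z L_\Gamma \|u\|_\infty] < \infty$. (One should be slightly careful that $F + tu$ need not lie in the convex set $\mathcal F$; I would either extend Assumption~\ref{ass:LipschitzinF} to a neighborhood of $\mathcal F$, or note that the Lipschitz bound on the composed map $\tilde J \circ \Gamma$ only needs to hold on the line segment between $F$ and $F + tu$, which follows from Assumption~\ref{ass:LipschitzinZ}/Assumption~\ref{ass:GammaLipschitz}-type bounds applied along that segment — this is the one point I would check carefully.) With the pointwise convergence from Step 1 and this uniform integrable bound, the dominated convergence theorem yields
\[
\lim_{t \to 0} \mathbb E\left[ \frac{\tilde J \circ \Gamma(Y + tu) - \tilde J \circ \Gamma(Y)}{t} \right] = \mathbb E\left[ \lim_{t \to 0} \frac{\tilde J \circ \Gamma(Y + tu) - \tilde J \circ \Gamma(Y)}{t} \right] = \mathbb E\left[ D_u \tilde J \circ \Gamma(Y) \right].
\]
Finally, I would identify the left-hand side with $D_u J(F)$ by linearity of expectation and the definition of the G\^ateaux/directional derivative of $J$, and invoke Assumption~\ref{ass:composed} to ensure all the expectations in sight are finite and the subtraction is legitimate.

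The main obstacle is the domination step, and specifically the subtlety that $K_z$ is a path-dependent constant whose $p$-th moment is finite but which is otherwise uncontrolled: I need the bound $K_z L_\Gamma \|u\|_\infty$ to be independent of $t$, which it is, but I must confirm that Assumption~\ref{ass:LipschitzinF} (or a segment-restricted version of it) genuinely applies to the perturbed points $F + tu$ and not merely to points of $\mathcal F$. If one instead wanted to work directly with $D_u \tilde J \circ \Gamma(y) = (D_{D_u \Gamma(y)} \tilde J)(\Gamma(y))$ and dominate \emph{that}, one would need a bound on $\|D_u \Gamma(y)\|$ and local Lipschitzness of $\tilde J$, which again reduces to the same Lipschitz assumptions; the difference-quotient route is cleaner because the mean value bound is immediate. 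Everything else — interchanging the finite limit with the finite-measure integral, and recognizing the resulting object as $D_u J(F)$ — is routine given Assumptions~\ref{ass:tildeJ}, \ref{ass:gamma}, and~\ref{ass:composed}.
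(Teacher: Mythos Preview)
Your proposal is correct and follows essentially the same approach as the paper: bound the difference quotients pathwise by $K_z L_\Gamma \|u\|_\infty$ using Assumptions~\ref{ass:LipschitzinF} and~\ref{ass:GammaLipschitz}, then apply dominated convergence to interchange limit and expectation. If anything you are more careful than the paper, which does not flag the issue of whether $F+tu$ lies in $\mathcal F$ and writes the domination step somewhat tersely.
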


\begin{proof}{Proof}
    First, observe that $\frac{1}{\epsilon} \left(\tilde{J}\circ\Gamma(y + \epsilon u) - \tilde{J}\circ\Gamma(y)\right)$ is integrable:
\begin{align*}
    &\int \frac{1}{\epsilon} \left(\tilde{J}\circ\Gamma(y + \epsilon u) - \tilde{J}\circ\Gamma(y)\right) d\pi_F^x(y)
    \\
    &\qquad\leq\int \frac{K_y L_{\Gamma}}{\epsilon} \epsilon\left\|u\right\| d\pi_F^x(y)\\
    &\qquad=K_y L_{\Gamma}\left\| u\right\|,
\end{align*}
where the first inequality follows from Assumption~\ref{ass:GammaLipschitz}. Thus, we conclude that
\begin{align*}
    &\mathbb E[D_u\tilde{J}\circ\Gamma(Y)]\\
   &\quad=\int \lim_{\epsilon\rightarrow 0} \frac{1}{\epsilon} \left(\tilde{J}\circ\Gamma(y + \epsilon u) - \tilde{J}\circ\Gamma(y)\right) d\pi_F^x(y)\\
   &\quad= \lim_{\epsilon\rightarrow 0} \frac{1}{\epsilon}\int\left(\tilde{J}\circ\Gamma(y + \epsilon u) - \tilde{J}\circ\Gamma(y)\right) d\pi_F^x(y)\\
   &\quad=D_u\mathbb E[\tilde{J}\circ\Gamma(Y)] = D_u J(F),
\end{align*}
where the penultimate equality is due to the Lebesgue dominated convergence theorem.
\end{proof}

Theorem~\ref{thm:objective-derivative} immediately suggests that we can use sample average approximation (SAA) to solve~(OPT), by approximating the expectation using i.i.d. Monte Carlo samples $\{Y_1,\cdots,Y_N\}$ drawn from the path measure~$\pi_F^x$, yielding the sample average objective
 \begin{align}\tag{MC-OPT}
     \min_{F \in\mathcal{F}} \frac{1}{N}\sum_{i=1}^N \tilde J \circ \Gamma (Y_i),
 \end{align}
and the sample average pathwise derivative
\begin{align*}
    {D}_u \hat{\mathbb E}[\tilde{J}\circ\Gamma(Y)] = \frac{1}{N}\sum_{i=1}^N D_u\tilde{J}\circ\Gamma(Y_i),
\end{align*}
 where $\hat{\mathbb E}$ is an expectation with respect to the empirical measure. The sample average above is an unbiased estimator of the directional derivative $D_u J(F)$. 
 We term this the {\it functional SAA}, to distinguish from the classical finite-dimensional SAA. 

\subsection{Skorkhod Regulated Processes}
Using the functional SAA in practice will require the computation of the (G\^{a}teaux) directional derivative of the regulated process $D_u \Gamma (\cdot)$. In this section, we specialize to the case of a Skorokhod regulated process~\cite[Chapter 7]{chen2001fundamentals},\cite[Ch. 14]{Whitt_2002_supplement} and derive the directional derivative $D_u \Gamma$. The Skorokhod regulated process is defined as 
\begin{align}~\label{eq:rbm}
	\Gamma\left(Y\right) (t) = Y(t) + L(t)
\end{align}
where $L$ is the multidimensional Skorokhod regulator process, defined as the fixed point $r$ of the functional given by $\pi(r)(t) = \sup_{0\leq s \leq t}\{-(Y(s) + Q r(s))_+\}$, and $Q$ is a column sub-stochastic matrix; see \citep[Sec. 14.2]{Whitt_2002} for instance. Here, for simplicity, we assume that $Q = 0$, which yields the closed form
\begin{align*}
   L(t) &= \sup_{0 \leq s\leq t}\{-Y(s)\}_+\\
   &= \Psi(Y)(t),
\end{align*}
where $\Psi:Y\in C\mapsto L\in C$ is the so-called Skorokhod regulator map.

Next, to show that the Skorokhod regulator map is G\^{a}teaux differentiable at each time point $t \in [0,T]$, we recall Danskin's theorem,
\begin{lemma}[Danskin's Theorem~\citep{BERNHARD19951163}]\label{lem:5.0}
    Let U and V be subsets of a Banach space  $\mathcal{U}$ and a topological space $\mathcal{V}$. Let $M:U\times V\to\mathbb{R}$. Denote by $D_1M(u,v;h)$ the directional derivative of $u\mapsto M(u,v)$ in the direction $h\in U$. Let $\Bar{M}(u)=\sup_{v\in V}M(u,v)$ and $\Phi_V(u) = \{v\in V: M(u,v)=\Bar{M}(u)\}$. Under following conditions:
    \begin{enumerate}
        \item V is compact.
        \item The mapping $(\delta,v)\mapsto M(u+\delta h, v)$ is upper semi-continuous.
        \item $\forall v\in V$ and $\forall \delta$ in a right neighborhood of 0, there exists a bounded directional derivative
        $$D_1M(u+\delta h,v;h)=\lim_{\epsilon\to 0^+}\frac{1}{\epsilon}[M(u+(\delta +\epsilon)h,v)-M(u+\delta h,v)].$$
        \item The map $(\delta ,v)\mapsto D_1M(u+\delta h,v;h)$ is upper semi-continuous at (0,v).
    \end{enumerate}
    Then, $\Bar{M}$ has a directional derivative at u in the direction h given by
    $D_h\Bar{M}(u)=\max_{u\in\Phi_V(u)}D_1M(u,v;h).$
\end{lemma}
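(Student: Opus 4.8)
The plan is to squeeze the right-hand difference quotient $q(\epsilon):=\epsilon^{-1}\bigl(\bar M(u+\epsilon h)-\bar M(u)\bigr)$ between two quantities that both tend, as $\epsilon\downarrow 0$, to $\mathsf d^{\ast}:=\max_{v\in\Phi_V(u)}D_1M(u,v;h)$. First I would record two preliminary facts. Putting $\delta=0$ in condition (2) makes $v\mapsto M(u,v)$ upper semicontinuous on the compact set $V$, so the supremum defining $\bar M(u)$ is attained and $\Phi_V(u)\neq\emptyset$; since $\Phi_V(u)=\{v:M(u,v)\geq\bar M(u)\}$ is a superlevel set of an upper semicontinuous function it is closed, hence compact, and by condition (4) (with $\delta=0$) the map $v\mapsto D_1M(u,v;h)$ is upper semicontinuous on it, so the maximum defining $\mathsf d^{\ast}$ is indeed attained.

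\emph{Lower bound.} Fix $v^{\ast}\in\Phi_V(u)$. From $\bar M(u+\epsilon h)\geq M(u+\epsilon h,v^{\ast})$ and $\bar M(u)=M(u,v^{\ast})$ we get $q(\epsilon)\geq\epsilon^{-1}\bigl(M(u+\epsilon h,v^{\ast})-M(u,v^{\ast})\bigr)$, which tends to $D_1M(u,v^{\ast};h)$ by condition (3) at $\delta=0$; taking the supremum over $v^{\ast}\in\Phi_V(u)$ yields $\liminf_{\epsilon\downarrow 0}q(\epsilon)\geq\mathsf d^{\ast}$. In particular $q$ is bounded below near $0$, so $\liminf_{\epsilon\downarrow 0}\bar M(u+\epsilon h)\geq\bar M(u)$, which I will reuse.

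\emph{Upper bound.} Pick $\epsilon_n\downarrow 0$ realizing $L^{\ast}:=\limsup_{\epsilon\downarrow 0}q(\epsilon)$. By condition (2) (now with $\delta=\epsilon_n$) and compactness of $V$, each $v\mapsto M(u+\epsilon_n h,v)$ attains its supremum, so choose $v_n$ with $M(u+\epsilon_n h,v_n)=\bar M(u+\epsilon_n h)$ and, by compactness, pass to a subnet with $v_n\to\bar v$. Joint upper semicontinuity of $(\delta,v)\mapsto M(u+\delta h,v)$ at $(0,\bar v)$ gives $\limsup_n\bar M(u+\epsilon_n h)=\limsup_n M(u+\epsilon_n h,v_n)\leq M(u,\bar v)\leq\bar M(u)$, which together with the reused bound forces $M(u,\bar v)=\bar M(u)$, i.e.\ $\bar v\in\Phi_V(u)$. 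Using $\bar M(u)\geq M(u,v_n)$,
\[
q(\epsilon_n)=\frac{M(u+\epsilon_n h,v_n)-\bar M(u)}{\epsilon_n}\leq\frac{M(u+\epsilon_n h,v_n)-M(u,v_n)}{\epsilon_n}.
\]
A mean value inequality for the scalar section $\delta\mapsto M(u+\delta h,v_n)$ on $[0,\epsilon_n]$ (right-differentiable on $[0,\epsilon_n)$ by (3)) bounds the right-hand side by $\sup_{\delta\in[0,\epsilon_n)}D_1M(u+\delta h,v_n;h)$; picking $\delta_n$ cofinally in this supremum, $(\delta_n,v_n)\to(0,\bar v)$, so condition (4) gives $L^{\ast}\leq\limsup_n D_1M(u+\delta_n h,v_n;h)\leq D_1M(u,\bar v;h)\leq\mathsf d^{\ast}$. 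Combining the two bounds, $\lim_{\epsilon\downarrow 0}q(\epsilon)=\mathsf d^{\ast}$, which is the assertion.

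The main obstacle is the upper bound, where two points need care: proving that the limit $\bar v$ of the near-maximizers really lands in $\Phi_V(u)$ — this is exactly where the joint (not separate) upper semicontinuity in (2) and (4) is used, in tandem with the elementary lower bound established first; and the mean value step, which requires the scalar sections $\delta\mapsto M(u+\delta h,v)$ to be sufficiently regular (continuity, to be extracted from (2)--(3), since finiteness of the right derivative rules out the downward jumps that upper semicontinuity alone would permit). If $\mathcal V$ is not metrizable, the sequential extractions above must be rephrased with nets and cofinal selections, which complicates the bookkeeping but not the logical structure.
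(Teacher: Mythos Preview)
Your argument is the standard squeeze proof of Danskin's theorem and is essentially correct, with the caveats you yourself flag (the mean value step needs continuity of $\delta\mapsto M(u+\delta h,v_n)$, and in a non-metrizable $\mathcal V$ one must work with nets). Note, however, that the paper does not supply a proof of this lemma at all: it is quoted from \cite{BERNHARD19951163} and used as a black box in the proof of Proposition~\ref{prop:Sko-pathwise}. So there is no ``paper's own proof'' to compare against; what you have written is a self-contained justification of a result the authors simply cite.
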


Recall that we seek the pathwise direciotnal derivative of the objective functional, $D_u\tilde{J}$. Lemma~\ref{lem:chain-rule} shows that it can be written in terms of the pathwise directional derivative of the Skorokhod regulator, $D_u\Gamma$. Next, we derive the explicit expression for $D_u\Gamma$ applying Danskin's Theorem, which in turn completes the derivation for $D_u\tilde{J}$.

\begin{proposition}[Pathwise Directional Derivative]\label{prop:Sko-pathwise}
The directional derivative of the regulated process $\Gamma(y)(t)$ exists pointwise for each $t \in [0,T]$. Furthermore, with probability 1, there are two cases for the directional derivative of $Z\mapsto\Gamma(y)(t)$ along direction $u\in C$ which is given by
\begin{align*}
&D_u\Gamma(y)(t)\\
&=\begin{cases}
u(t) + \sup_{s\in \Phi_t(y)} \{-u(s)\}, &\text{case 1}.\\
u(t), &\text{case 2}.
\end{cases}
\end{align*}
where 
\begin{align*}
    &\text{Case 1:} \\
    &\quad y(s): 0 < s \leq t \text{ goes below } 0\text{ at least once, or} \\
    &\quad y(0) = 0, u(0) < 0 \text{, and } y(t) > 0, \forall s \in (0,t]. \\
    &\text{Case 2:} \\
    &\quad y(t) > 0, \forall s \in [0,t], \text{ or} \\
    &\quad y(0) = 0, u(0) \geq 0\text{, and } y(t) > 0, \forall s \in (0,t]. 
\end{align*}
Here, $u\in\mathcal{F}$ and $\Phi_t(y):= \{0\leq s \leq t : \Gamma(y)(t) = y(t)-y(s)\}$ is the set of all time points $s\leq t$ where the function $y$ attains its infima.
\end{proposition}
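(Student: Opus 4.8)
The plan is to apply Danskin's Theorem (Lemma~\ref{lem:5.0}) to the representation $\Gamma(y)(t) = y(t) + L(t)$ with $L(t) = \sup_{0\le s\le t}\{-y(s)\}_+$. The key observation is that the term $\sup_{0\le s\le t}\{-y(s)\}_+$ is an optimal-value function of the form $\bar M(y) = \sup_{v\in V} M(y,v)$, where the ``inner'' variable is the time index $s$. More precisely, I would write $L(t) = \max\!\left\{0,\ \sup_{0\le s\le t}(-y(s))\right\}$ and view this as a maximum over the compact set $V = [0,t]\cup\{\star\}$ (or, more cleanly, over $[0,t]$ after first splitting into the two regimes $L(t)>0$ and $L(t)=0$), with $M(y,s) = -y(s)$ for $s\in[0,t]$. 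The first step is therefore to verify the four hypotheses of Lemma~\ref{lem:5.0}: $V$ compact is immediate; the maps $(\delta,v)\mapsto M(y+\delta u,v) = -(y(v)+\delta u(v))$ and its inner directional derivative $(\delta,v)\mapsto -u(v)$ are in fact jointly continuous (hence upper semicontinuous) because $y,u\in C$; and the inner directional derivative of $\delta\mapsto -(y+\delta u)(s)$ is simply $-u(s)$, bounded by $\|u\|$. With these verified, Danskin gives that, when $L(t)>0$ (so the maximizing set is $\Phi_t(y)\subseteq(0,t]$ up to the endpoint conventions),
\[
D_u L(t) = \max_{s\in\Phi_t(y)}\{-u(s)\} = \sup_{s\in\Phi_t(y)}\{-u(s)\},
\]
and adding the (linear, hence trivially differentiable) term $y(t)$ yields $D_u\Gamma(y)(t) = u(t) + \sup_{s\in\Phi_t(y)}\{-u(s)\}$, which is Case~1. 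When instead $y(s)>0$ for all $s\le t$ strictly bounds away from the infimum zero in a way that forces $L(t)=0$ in a neighborhood of the perturbation, the supremum term is locally constant at $0$, so $D_uL(t)=0$ and $D_u\Gamma(y)(t)=u(t)$, which is Case~2.

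The second step is to handle the boundary case $y(0)=0$ carefully, since this is where the two sub-conditions inside each of Case~1 and Case~2 come from. If $y(0)=0$ and $y(s)>0$ for all $s\in(0,t]$, then whether $L(t+\epsilon\text{-perturbation})$ stays at $0$ or becomes positive depends on the sign of the perturbation at $s=0$: if $u(0)\ge 0$ then $-(y+\epsilon u)(0)=-\epsilon u(0)\le 0$ and, by continuity of $y$ bounding it away from $0$ on $[\eta,t]$ for small $\eta$, the supremum stays $\le 0$ for small $\epsilon$, giving Case~2; if $u(0)<0$ then $-(y+\epsilon u)(0)=\epsilon|u(0)|>0$, so $s=0$ enters the active set and $\Phi_t(y)=\{0\}$ in the limit, giving $D_uL(t) = -u(0) = \sup_{s\in\{0\}}\{-u(s)\}$, i.e.\ Case~1. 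I would make this precise with an $\epsilon$--$\eta$ argument using uniform continuity of $y$ and $u$ on $[0,T]$.

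The third step is the ``with probability 1'' qualifier: the statement only asserts that, almost surely under $\pi_F^x$, the sample path $y$ falls into one of the enumerated configurations. For this I would invoke standard regularity of the driving process — for a Brownian-type $Z$, with probability one the path $y=Z+F$ either hits $0$ on $(0,t]$ or stays strictly positive there, and the degenerate events (e.g.\ the infimum being attained only asymptotically at an endpoint, or $y$ touching $0$ tangentially in a way that makes $\Phi_t(y)$ pathological) have probability zero. This is where I would lean on properties of the Skorokhod map and the a.s.\ continuity/oscillation behavior of $Z$.

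The main obstacle I anticipate is the measurability and topological bookkeeping of the active set $\Phi_t(y)$ together with the endpoint conventions: Danskin's conclusion is clean only when the ``$\max$'' over $\Phi_V(u)$ is actually attained and $\Phi_t(y)$ behaves upper-semicontinuously, and the $y(0)=0$ case sits exactly at the seam where the active set can jump. Getting the case split to match the proposition verbatim — in particular justifying that $\sup_{s\in\Phi_t(y)}\{-u(s)\}$ is the right object (attained, and equal to the Danskin $\max$) rather than merely a one-sided bound — is the delicate point; the rest is routine verification of the Danskin hypotheses and a dominated-convergence-style control using the $L_\Gamma$-Lipschitz bound already assumed.
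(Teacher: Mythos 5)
Your proposal is correct and follows essentially the same route as the paper: apply Danskin's theorem to the supremum representation of the Skorokhod regulator, resolve the boundary case $y(0)=0$ by the sign of $u(0)$, and dismiss the remaining tangential configurations as probability-zero events. The only difference is cosmetic --- the paper folds the zero branch into the inner function by writing $\Gamma(y)(t)=\sup_{0\le s\le t}\max\{y(t)-y(s),\,y(t)\}$ and taking the Danskin max over the active set of that joint expression, whereas you pull $y(t)$ out additively and split into the regimes $L(t)>0$ and $L(t)=0$; both reduce to the same routine continuity checks of the Danskin hypotheses and the same case analysis.
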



\begin{lemma}\label{lem:5.2}
The Skorokhod regulator map is pointwise Frechet differentiable. That is, for each $t\in[0,T]$, if the process $y(s):0< s\leq t$ either attains negative values at least once or $y(0)=0$, $u(0)< 0$, $y(t)>0\,\forall \,s\in(0,t]$, we have
$$|\Psi(y+u)(t)-\Psi(y)(t)-\sup_{s\in\Phi_t(y)}\{-u(s)\}|=o(||u||).$$
where $u\in\mathcal{F}$ and $\Phi_t(y) := \{0\leq s \leq t : \Psi(y)(t) = -y(s)\}$ is the set of all time points $s$ up to $t$ where the function $y$ attains its infima.
If $y(t)>0\$$ for all $s\in[0,t]$, or if $y(0)=0$, $u(0)\geq 0$, $y(t)>0$ for all $s\in(0,t]$, then we obtain
$$|\Psi(y+u)(t)-\Psi(y)(t)|=o(||u||).$$
\end{lemma}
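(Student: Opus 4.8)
The plan is to prove Lemma~\ref{lem:5.2} by leveraging Proposition~\ref{prop:Sko-pathwise}, which already gives the pointwise Gâteaux directional derivative $D_u\Psi(y)(t)$, together with the one-dimensional fact quoted in Definition~\ref{def:gatfre}: for a scalar-valued map that is Lipschitz in a neighborhood, Gâteaux differentiability is equivalent to Fréchet differentiability. Fix $t\in[0,T]$ and consider the map $\Lambda_t: C\to\mathbb{R}$, $\Lambda_t(y) := \Psi(y)(t) = \sup_{0\le s\le t}\{-y(s)\}_+$. The two cases in the statement correspond exactly to Case~1 and Case~2 of Proposition~\ref{prop:Sko-pathwise}, where the candidate derivative is the bounded linear functional $u\mapsto \sup_{s\in\Phi_t(y)}\{-u(s)\}$ (Case~1, when $\Phi_t(y)$ is the active set and $y(t)>0$ so the outer positive-part is inactive) or $u\mapsto 0$ (Case~2). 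Since $\Lambda_t$ is a supremum of $1$-Lipschitz functionals on $(C,\|\cdot\|_\infty)$ it is itself $1$-Lipschitz, hence Lipschitz in every neighborhood; so the equivalence gives Fréchet differentiability at $y$ with derivative equal to the Gâteaux derivative, which is the assertion $|\Psi(y+u)(t)-\Psi(y)(t)-\sup_{s\in\Phi_t(y)}\{-u(s)\}| = o(\|u\|)$ (resp. $o(\|u\|)$ with zero linear term).

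The one subtlety is that the "Lipschitz $+$ Gâteaux $\Rightarrow$ Fréchet" equivalence is a finite-dimensional statement, whereas $C$ is infinite-dimensional; so I would not invoke it as a black box over all of $C$. Instead I would argue directly. First, $\Lambda_t$ depends on $y$ only through its restriction to $[0,t]$, and more importantly, by the sup representation, for $u$ small the active set of $y+u$ is contained in a neighborhood of the active set $\Phi_t(y)$ of $y$; this is where the case hypotheses do the work. In Case~1 (either $y$ dips strictly below $0$ before $t$, or $y(0)=0$, $u(0)<0$, and $y>0$ on $(0,t]$) one has $\Psi(y)(t) = -\inf_{0\le s\le t} y(s) > 0$ strictly, so there is $\delta>0$ with $-y(s) < \Psi(y)(t) - \delta$ outside an $\eta$-neighborhood of $\Phi_t(y)$, and for $\|u\|<\delta/2$ the sup defining $\Psi(y+u)(t)$ is attained within that neighborhood; a direct estimate then gives $\Psi(y+u)(t) - \Psi(y)(t) = \sup_{s\in\Phi_t(y)}\{-u(s)\} + R(u)$ with $|R(u)|$ controlled by the modulus of continuity of $y$ on the shrinking neighborhood, which is $o(1)$ — but to get $o(\|u\|)$ rather than merely $o(1)$ I would exploit that the error is bounded by $\mathrm{osc}(y;\text{nbhd of radius }\eta(\|u\|))$ where $\eta(\|u\|)\to 0$, combined with the two-sided sup/inf comparison $|\Psi(y+u)(t) - \Psi(y)(t)| \le \|u\|$, pinching the linear part out. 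In Case~2, $y$ stays strictly positive on $[0,t]$ (or does so after an initial instant with $u(0)\ge 0$), so for $\|u\|$ small enough $y+u$ is still positive on $[0,t]$, whence $\Psi(y+u)(t)=\Psi(y)(t)=0$ and the estimate is trivial.

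The main obstacle is precisely upgrading the pointwise/Gâteaux directional derivative from Proposition~\ref{prop:Sko-pathwise} to a genuine $o(\|u\|)$ Fréchet bound that is uniform in the direction $u$ in the unit ball — the remainder involves $\sup$ over an active set $\Phi_t(y+u)$ that moves with $u$, and controlling $\sup_{s\in\Phi_t(y+u)}\{-u(s)\} - \sup_{s\in\Phi_t(y)}\{-u(s)\}$ requires showing the active set is upper semicontinuous at $y$ in a quantitative way, using the strict positivity of $\Psi(y)(t)$ in Case~1 (a "strict complementarity"-type condition built into the case split). I would isolate this as a short lemma: if $g:[0,t]\to\mathbb{R}$ is continuous with $\arg\max g = \Phi$ and $M=\max g$, then for any perturbation $v$ with $\|v\|_\infty\le\epsilon$, $\arg\max(g+v)\subseteq\{s: g(s)\ge M-2\epsilon\}$, and hence $|\max(g+v) - M - \sup_{\Phi}v| \le \mathrm{osc}(v;\, \{g\ge M-2\epsilon\})$; applying this with $g(s)=-y(s)$ and $v=-u$, and then bounding $\mathrm{osc}(-u; \cdot)$ by $2\|u\|$ while separately bounding it by the diameter-shrinkage argument, yields the $o(\|u\|)$ conclusion once we note the sublevel set $\{-y\ge M-2\|u\|\}$ shrinks to $\Phi_t(y)$ as $\|u\|\to 0$. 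The rest is bookkeeping across the sub-cases of the initial condition $y(0)=0$.
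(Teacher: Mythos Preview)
Your route is genuinely different from the paper's. The paper does not go through Proposition~\ref{prop:Sko-pathwise} at all: it gives a direct elementary argument, establishing the lower bound $\Psi(y+u)(t)-\Psi(y)(t)\ge \sup_{s\in\Phi_t(y)}\{-u(s)\}$ by evaluating at points of $\Phi_t(y)$, and the upper bound via the subadditivity inequality $(-y-u)_+\le(-y)_++(-u)_+$, arriving at $0\le R\le 2\|u\|$ and then asserting $R=o(\|u\|)$. Your active-set localization idea (the remainder is controlled by $\sup_{A_{\|u\|}}(-u)-\sup_{\Phi_t(y)}(-u)$, where $A_\epsilon=\{s:-y(s)\ge M-2\epsilon\}$) is more informative because it isolates exactly \emph{where} the error lives.

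However, the step where you claim that combining ``$\mathrm{osc}(-u;A_{\|u\|})\le 2\|u\|$'' with ``$A_{\|u\|}\downarrow\Phi_t(y)$'' yields $o(\|u\|)$ does not go through, and this is a real gap, not a technicality. The shrinkage of $A_{\|u\|}$ gives $\mathrm{osc}(\hat u;A_{\|u\|})\to 0$ for each \emph{fixed} direction $\hat u$, which recovers the G\^ateaux derivative, but Fr\'echet requires this to hold uniformly over the unit sphere of $C$, and it does not. Concretely, take $y(s)=-s(1-s)$ on $[0,1]$ so that $\Phi_1(y)=\{1/2\}$ and $A_\epsilon$ has width $\sim\sqrt\epsilon$; the perturbations $u_r(s)=r\sin((s-1/2)/\sqrt r)$ satisfy $\|u_r\|=r$, $\sup_{\Phi}(-u_r)=0$, yet a direct computation gives $\Psi(y+u_r)(1)-\Psi(y)(1)\sim c\,r$ for a positive constant $c$, so the remainder is $\Theta(\|u_r\|)$, not $o(\|u_r\|)$. (These $u_r$ are even equicontinuous, so restricting $u\in\mathcal F$ compact does not rescue the argument.) A related slip: the candidate ``derivative'' $u\mapsto\sup_{s\in\Phi_t(y)}\{-u(s)\}$ is not a bounded \emph{linear} functional unless $\Phi_t(y)$ is a singleton, so the Lipschitz-plus-G\^ateaux-implies-Fr\'echet heuristic is inapplicable here for a second reason.

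It is worth noting that the paper's own displayed inequalities also only yield $R=O(\|u\|)$; the concluding ``$=o(\|u\|)$'' is not justified by the bounds shown, so the gap you flagged as the ``main obstacle'' is genuine and is present in the paper's argument as well. What both approaches do establish rigorously is the G\^ateaux derivative (i.e., $R(\epsilon u)/\epsilon\to 0$ for fixed $u$), and this is all that is actually used downstream in Lemma~\ref{lem:5.3}.
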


Lemma~\ref{lem:5.2} recovers the `Fundamental Lemma' in~\cite{MM}. In the optimization setting of interest, it will be useful to establish the somewhat stronger result in Lemma~\ref{lem:5.3} next.

\begin{lemma}\label{lem:5.3}
    $\frac{\Gamma(y+\epsilon u)-\Gamma(y)}{\epsilon}\to D_u\Gamma(y)$ in $L^p$.
\end{lemma}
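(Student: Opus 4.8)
The plan is to establish $L^p$ convergence of the difference quotients by combining pointwise convergence (already available from Proposition~\ref{prop:Sko-pathwise} and Lemma~\ref{lem:5.2}) with a uniform integrability / domination argument, so that the convergence upgrades from almost-sure to $L^p$. First I would record that, by Lemma~\ref{lem:5.2} (equivalently Proposition~\ref{prop:Sko-pathwise}), for $\pi^x$-a.e.\ path $y$ and every $t \in [0,T]$ we have $\frac{\Gamma(y+\epsilon u)(t) - \Gamma(y)(t)}{\epsilon} \to D_u\Gamma(y)(t)$ as $\epsilon \to 0^+$; since $\Gamma$ is the Skorokhod map and both $\Gamma(y+\epsilon u)$ and $\Gamma(y)$ are continuous functions on the compact interval $[0,T]$, I would want to argue this convergence is in fact uniform in $t$, i.e.\ in the sup-norm on $C$, so that $\left\|\frac{\Gamma(y+\epsilon u) - \Gamma(y)}{\epsilon} - D_u\Gamma(y)\right\|_\infty \to 0$ pointwise in $y$. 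Then the goal reduces to showing $\mathbb{E}\left[\left\|\frac{\Gamma(y+\epsilon u) - \Gamma(y)}{\epsilon} - D_u\Gamma(y)\right\|_\infty^p\right] \to 0$.

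The second step is domination. By Assumption~\ref{ass:GammaLipschitz}, $\Gamma$ is $L_\Gamma$-Lipschitz on $C$, so $\left\|\frac{\Gamma(y+\epsilon u) - \Gamma(y)}{\epsilon}\right\|_\infty \le \frac{L_\Gamma \|\epsilon u\|_\infty}{\epsilon} = L_\Gamma \|u\|_\infty$, a deterministic constant independent of $\epsilon$ and $y$. Likewise, the explicit form in Proposition~\ref{prop:Sko-pathwise} gives $\|D_u\Gamma(y)\|_\infty \le 2\|u\|_\infty$ in Case~1 and $\|u\|_\infty$ in Case~2, hence $\|D_u\Gamma(y)\|_\infty \le 2\|u\|_\infty$ uniformly. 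Therefore the integrand $\left\|\frac{\Gamma(y+\epsilon u) - \Gamma(y)}{\epsilon} - D_u\Gamma(y)\right\|_\infty^p$ is bounded above by the constant $(L_\Gamma\|u\|_\infty + 2\|u\|_\infty)^p = (L_\Gamma + 2)^p\|u\|_\infty^p$, which is trivially $\pi^x$-integrable since $\pi^x$ is a probability measure. With a uniform deterministic bound and a.s.\ convergence to $0$ (in the sup-norm), the dominated convergence theorem yields $\mathbb{E}\left[\left\|\frac{\Gamma(y+\epsilon u) - \Gamma(y)}{\epsilon} - D_u\Gamma(y)\right\|_\infty^p\right] \to 0$, which is precisely the claimed $L^p$ convergence.

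The main obstacle I anticipate is the upgrade from pointwise-in-$t$ convergence to uniform-in-$t$ (sup-norm) convergence of the difference quotients on the $\pi^x$-null-complement set of paths. Proposition~\ref{prop:Sko-pathwise} and Lemma~\ref{lem:5.2} are stated \emph{pointwise} in $t$, with the case distinction (Case~1 vs.\ Case~2) depending on $t$ and on whether $y$ has dipped below $0$ by time $t$. The set of $t$ where $y$ first touches its running infimum is delicate, and near such times the difference quotient can behave irregularly; a crude pointwise-plus-compactness argument does not immediately give uniformity. One route is to invoke a second-moment or Dini-type argument: show the difference quotients are monotone or equicontinuous in $t$ for fixed $y$, so that pointwise convergence of continuous functions to a continuous limit on a compact set is automatically uniform (Dini's theorem). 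Alternatively — and this is probably cleaner — one can bypass sup-norm uniformity entirely: work with the $L^p(\pi^x)$ norm of the \emph{pointwise} quantity for each fixed $t$, apply dominated convergence at each $t$ using the same deterministic bound, and then (if the ambient notion of $L^p$ convergence in the statement is interpreted as convergence in $L^p(\pi^x; C)$ with the sup-norm) use a further dominated-convergence pass over $t$, or simply note that the sup over the rationals suffices by continuity. I would lead with the domination estimate (which is robust and short) and handle the uniformity in $t$ via Dini's theorem applied pathwise, flagging that the Case~1/Case~2 boundary is $\pi^x$-negligible so the limit function $t \mapsto D_u\Gamma(y)(t)$ is continuous for a.e.\ $y$.
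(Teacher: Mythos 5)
Your core mechanism --- pointwise convergence of the difference quotients plus domination by a deterministic constant plus dominated convergence --- is exactly the paper's argument. The paper gets its domination by noting that Lemma~\ref{lem:5.2} makes the pointwise deviation equal to $\tfrac{1}{\epsilon}\,o(\|\epsilon u\|)$, which vanishes as $\epsilon\to 0$ and is bounded since $\|u\|$ is fixed; your bound via the $L_\Gamma$-Lipschitz property of $\Gamma$ and the explicit formula for $D_u\Gamma$ is an equally valid (arguably cleaner) dominating constant. In that respect the proposal is sound.

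The genuine problem is your primary route through sup-norm (uniform-in-$t$) convergence. The limit $t\mapsto D_u\Gamma(y)(t)=u(t)+\sup_{s\in\Phi_t(y)}\{-u(s)\}$ is in general \emph{discontinuous} in $t$: the argmin set $\Phi_t(y)$ relocates in jumps as $t$ increases (when the running infimum of $y$ is attained at a new time after a flat stretch), and at such times $D_u\Gamma(y)(\cdot)$ jumps --- this is precisely why the derivative of the Skorokhod map is studied in the c\`adl\`ag space $D$ rather than $C$ in \cite{MM,MR}. Since each difference quotient $t\mapsto \frac{\Gamma(y+\epsilon u)(t)-\Gamma(y)(t)}{\epsilon}$ is continuous, uniform convergence to a discontinuous limit is impossible, so the sup-norm statement you aim for is generally false. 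Dini's theorem cannot rescue it (it requires a continuous limit), and the remark that ``the sup over the rationals suffices by continuity'' fails for the same reason. Your fallback --- apply dominated convergence to the pointwise-in-$t$ deviation, using the constant bound, and integrate --- is the correct reading of the lemma and is what the paper actually does; you should lead with that and drop the uniformity claim entirely.
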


\begin{proof}{Proof}
    Given that $\frac{\Gamma(y+\epsilon u)(t)-\Gamma(y)(t)}{\epsilon}\to D_u\Gamma(y)(t)$ pointwise for all $t\in[0,T]$. We aim to show that this convergence holds in $L^p$. To see this, observe that by Lemma~\ref{lem:5.2}, we have
\begin{align*}
    &\left| \frac{\Gamma(y+\epsilon u)(t)-\Gamma(y)(t)}{\epsilon}- D_u\Gamma(y)(t)\right|\\
    &\quad=\frac{1}{\epsilon}\left|\Gamma(y+\epsilon u)(t)-\Gamma(y)(t)- D_{\epsilon u}\Gamma(y)(t)\right|\\
    &\quad=\frac{1}{\epsilon}\left|\Psi(y+\epsilon u)(t)-\Psi(y)(t)- D_{\epsilon u}\Psi(y)(t)\right|\\
    &\quad=\frac{1}{\epsilon}o(||\epsilon u||)=o(|| u||).
\end{align*}  

Since $|| u||$ is bounded on compact support, we apply the Dominated Convergence Theorem to conclude that
\begin{align*}
    \frac{\Gamma(y+\epsilon u)-\Gamma(y)}{\epsilon}\to D_u\Gamma(y)\quad\text{in } L^p.
\end{align*}
 
\end{proof}

\paragraph{Remark.} The directional derivative of the Skorokhod regulator was derived in~\cite{MM,Whitt_2002_supplement} in the scalar setting, we provide a new proof that leverages Danskin's theorem from convex analysis. To our knowledge, this connection has not been made before in the literature. Note that the directional derivative for the full multidimensional Skorokhod regulator has been derived in~\cite{MR}, and is considerably more complicated than the scalar setting. Indeed,~\cite{MR} are able to derive the object pointwise (for each $t \in [0,T]$) in greater generality and over the space of right continuous functions that have left limits (i.e., the so-called Skorokhod space). The complication arises partly because of the definition of the Skorokhod regulator process $L$ as the unique fixed point of the functional $\pi(r)(t) = \sup_{0 \leq s \leq t} \{-(Y(s) + Q r(s))_+\}$. Our proof above relies on the condition that $Q = 0$ and the paths lie in $C$, a subset of the Skorokhod space. However, we believe that Danskin's theorem remains valid even for $Q \neq 0$ and over the full Skorokhod space. This follows from the fact that the Skorokhod space forms a Banach space when equipped with the uniform (sup) norm. To keep the discussion tightly focused on SAA for generally regulated stochastic models (and not just Skorokhod regulated processes) we will not further elaborate this result and leave the details to a separate paper.
\section{{Equiconvergence and Consistency of Functional SAA}}~\label{sec:equi}
In the previous section, we established that the pathwise directional derivative can be used to construct an unbiased estimator of the true objective's gradient. This result  legitimizes a \emph{functional SAA} scheme defined over the full function subspace $\mathcal{F}$. Recall the original problem:
\begin{align*}
\tag{OPT}
 &~\text{min:}~J(F) := \underset{C}\int \tilde J\circ\Gamma(z+F)\pi^x(dz),\\
\nonumber&\text{subject to:}~
F \in \mathcal F.
\end{align*}
Then the (intermediate) functional SAA problem is
\begin{align}~\tag{MC-OPT}
    \text{minimize}&~\frac{1}{N} \sum_{i=1}^N \tilde J \circ \Gamma (Z_i + F),\\
    \text{subject to:}&~F \in \mathcal F, \nonumber
\end{align}
where $N$ is the sample size and $\{Z_i : i = 1,\ldots, N\}$ are i.i.d. samples from $\pi^x$. In this section, we establish consistency for the functional SAA in (MC-OPT) as $N\rightarrow\infty$. We show that the consistency can be achieved without any discretization when optimizing over the function space $\mathcal {F} $ and sampling from the path measure $\pi^x$. For simplicity, throughout this section let us assume that $\Gamma$ is the identity map. We will subsequently observe that the forthcoming results extend to the regulated case.
We now state the central equiconvergence theorem which shows that the SAA objective is uniformly close to the true objective over the space $F\in\mathcal F$ with high probability. Recall from Assumption~\ref{ass:covering} that $\log N(\epsilon, \mathcal{F}, \|\cdot\|_\infty)$ represents the covering number of the feasible space $\mathcal{F}$ under the sup-norm.
\sloppy
\begin{theorem}~\label{thm:equiconv2}
Suppose the cost function $\tilde J$ satisfies Assumption~\ref{ass:tildeJ},  Assumption~\ref{ass:LipschitzinZ}, Assumption~\ref{ass:LipschitzinF} (for some $1 \leq p < +\infty$) and Assumption~\ref{ass:covering}. Let $\mathbf{Z} = (Z_1,\cdots,Z_N)$ be an i.i.d. sample drawn from $\pi^x$. Then, for any $\delta > 0$ and some $1 \leq p < +\infty$, with probability at least $1-\delta$, for any $F \in \mathcal{F}$ we have
\begin{align*} 
    J(F) &\leq \frac{1}{N} \sum_{i=1}^N \tilde J(Z_i+F) + O\left( \sqrt{\frac{\log(1/\delta)}{N}} \right).
\end{align*}
\end{theorem}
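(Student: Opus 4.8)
The plan is to establish the stated high-probability bound as a one-sided uniform law of large numbers via an empirical-process (Gaussian/Rademacher complexity) argument, combined with a McDiarmid-type concentration inequality. Write $\Delta(F) := J(F) - \frac{1}{N}\sum_{i=1}^N \tilde J(Z_i + F)$, so that the claim is that $\sup_{F \in \mathcal F} \Delta(F) = O\big(\sqrt{\log(1/\delta)/N}\big)$ with probability at least $1-\delta$. The first step is to control $\mathbb{E}\big[\sup_{F\in\mathcal F}\Delta(F)\big]$ by a symmetrization argument: introduce i.i.d.\ Rademacher (or standard Gaussian) multipliers $\sigma_i$ and bound the expected supremum by (a constant times) $\mathbb{E}\big[\sup_{F\in\mathcal F}\frac{1}{N}\sum_{i=1}^N \sigma_i \,\tilde J(Z_i + F)\big]$, i.e.\ the Gaussian (Rademacher) complexity of the function class $\{z \mapsto \tilde J(z+F) : F \in \mathcal F\}$ indexed by $\mathcal F$. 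This is precisely the object that Proposition~\ref{thm:equiconv} (the referenced equiconvergence proposition) is designed to bound, so I would invoke it directly; if instead one must reprove it here, the key is that by Assumption~\ref{ass:LipschitzinF} the map $F \mapsto \tilde J(z+F)$ is $K_z$-Lipschitz in $\|\cdot\|_\infty$ uniformly over the (compact, hence bounded-diameter) set $\mathcal F$, so a Dudley entropy-integral bound using the covering-number control $\log N(\epsilon,\mathcal F,\|\cdot\|_\infty) \le \epsilon^{-1/\alpha}$ from Assumption~\ref{ass:covering} (with $\alpha > 1$, which makes the entropy integral $\int_0^{\mathrm{diam}(\mathcal F)} \epsilon^{-1/(2\alpha)}\, d\epsilon$ finite) yields a bound of order $\mathbb{E}[K_Z]\,\mathrm{diam}(\mathcal F)/\sqrt{N}$, which is $O(N^{-1/2})$.

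The second step is to upgrade this in-expectation bound to a high-probability statement. I would apply the bounded-differences (McDiarmid) inequality to the function $(Z_1,\dots,Z_N) \mapsto \sup_{F\in\mathcal F}\Delta(F)$. Changing one coordinate $Z_i$ changes each summand $\tilde J(Z_i+F)$ by at most $2\kappa\,\mathrm{diam}(\text{support})$ in the bounded case, or more carefully by an amount controlled through Assumption~\ref{ass:LipschitzinZ} ($\kappa$-Lipschitz in $z$); combined with integrability of $\tilde J$ (Assumption~\ref{ass:composed}) and the $L^p$ moment bound on $K_z$ (Assumption~\ref{ass:LipschitzinF}), the per-coordinate perturbation is $O(1/N)$, so the sum of squared bounded differences is $O(1/N)$. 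McDiarmid then gives that $\sup_{F\in\mathcal F}\Delta(F) \le \mathbb{E}\big[\sup_{F\in\mathcal F}\Delta(F)\big] + O\big(\sqrt{\log(1/\delta)/N}\big)$ with probability at least $1-\delta$. Chaining this with the Step-1 bound $\mathbb{E}[\sup\Delta(F)] = O(N^{-1/2})$ absorbs everything into the single displayed term $O(\sqrt{\log(1/\delta)/N})$, and rearranging $\Delta(F) \le \sup_{F'}\Delta(F')$ for every fixed $F \in \mathcal F$ gives exactly the asserted inequality.

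The main obstacle I anticipate is the unboundedness of $\tilde J$: the summands $\tilde J(Z_i+F)$ need not be bounded random variables (only $p$-th-moment integrable, via $\mathbb{E}[K_z^p]<\infty$ for some $p\ge 2$ in Assumption~\ref{ass:LipschitzinF}, together with Assumption~\ref{ass:composed}), so the vanilla bounded-differences inequality does not apply verbatim. The fix is a standard truncation argument: split $\tilde J = \tilde J\mathbf{1}\{|\tilde J|\le M\} + \tilde J\mathbf{1}\{|\tilde J|>M\}$, apply the Gaussian-complexity-plus-McDiarmid machinery to the truncated (bounded) part, control the tail part in expectation using Markov's inequality and the $p$-th moment bound (which forces the choice $p \ge 2$ so the residual is $o(N^{-1/2})$ or at worst $O(N^{-1/2})$ after optimizing $M = M(N)$), and combine. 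A secondary technical point is making the symmetrization step rigorous for an unbounded index class — this again goes through after truncation, or alternatively by using the Lipschitz structure to reduce to a single "centering" random variable $\tilde J(Z_i + F_0)$ plus a uniformly Lipschitz remainder, and noting that $F\mapsto\tilde J(z+F)$ being Lipschitz lets one invoke a contraction (Ledoux–Talagrand) inequality to pass from the complexity of $\mathcal F$ in sup-norm to the complexity of the loss class. Apart from these measure-theoretic care points, the argument is the standard uniform-convergence template, and I expect the bulk of the work to be in the truncation bookkeeping rather than in any conceptually new step.
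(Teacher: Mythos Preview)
Your two–step template (symmetrization $+$ Dudley entropy integral to bound $\mathbb{E}[\sup_{F}\Delta(F)]$, then a concentration inequality to pass to high probability) is exactly the paper's architecture, and your use of Assumption~\ref{ass:LipschitzinF} together with the covering condition to make the entropy integral converge is the right mechanism for Step~1.

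The gap is in Step~2. Your truncation fix does not deliver the stated $O(\sqrt{\log(1/\delta)/N})$ rate under the hypotheses you have. Truncating $\tilde J$ at level $M$ gives bounded differences of size $2M/N$, hence a McDiarmid deviation of order $M\sqrt{\log(1/\delta)/N}$; the tail piece, controlled only through $\mathbb{E}[K_Z^p]<\infty$ (or integrability of $\tilde J$), contributes $O(M^{-(p-1)})$ in expectation. Optimizing $M$ balances these at $M\sim N^{1/(2p)}$ and yields a rate $N^{-(p-1)/(2p)}$, which is strictly slower than $N^{-1/2}$ for every finite $p$; with $p=2$ you get only $N^{-1/4}$. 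High-probability control of the tail is worse still, since you need $\mathbb{P}(\max_i |\tilde J(Z_i+F)|>M)$ small \emph{uniformly in $F$}, which forces $M$ to grow with $N$. So ``the residual is $o(N^{-1/2})$'' is not true here, and the truncation bookkeeping you anticipate being routine is in fact where the argument breaks.

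The paper sidesteps truncation entirely. It observes (its Proposition~\ref{thm:equiconv}) that by Assumption~\ref{ass:LipschitzinZ} the map
\(
\boldsymbol\xi\mapsto \varphi(\boldsymbol\xi):=\sup_{F\in\mathcal F}\{J(F)-\tfrac1N\sum_i \tilde J(\xi_i+F)\}
\)
is $(\kappa/N)$-Lipschitz on $C^N$ with the $\ell^1$ product of sup-norms, and then applies Kontorovich's sub-Gaussian McDiarmid inequality for Lipschitz functionals on metric probability spaces with finite sub-Gaussian diameter $\Delta_{\mathrm{SG}}(C)$. This directly yields
\(
\varphi(\mathbf Z)\le \mathbb{E}[\varphi(\mathbf Z)] + \sqrt{2\kappa^2\Delta_{\mathrm{SG}}^2(C)\log(1/\delta)/N}
\)
with no truncation and no loss in rate. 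The price is an extra structural assumption—$\Delta_{\mathrm{SG}}(C)<\infty$—which the paper imposes in Proposition~\ref{thm:equiconv} (it holds, e.g., for Wiener measure) but, notably, does not list in the theorem statement. If you want to repair your approach, you should either adopt this Lipschitz-concentration route, or explicitly assume sub-Gaussian tails on $\|Z\|_\infty$ so that truncating $Z$ (not $\tilde J$) at level $M\asymp\sqrt{\log(N/\delta)}$ works, at the cost of an extra $\sqrt{\log N}$ factor.
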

There are two main ingredients to prove Theorem~\ref{thm:equiconv2}. We start by bounding the Gaussian complexity of the SAA (see \eqref{eq:gc-def} below), as a direct consequence of Proposition~\ref{thm:gauss-complex}. This in turn feeds into a high probability concentration bound of the SAA around the true objective (Proposition~\ref{thm:equiconv}). All the proofs are in the Appendix~\ref{sec:equi-proofs}.

We now supply some supporting lemmas in preparation for Proposition~\ref{thm:gauss-complex}. Consider the $\mathbb R^N$-valued random field $\mathcal{G}_\cdot(\cdot)$, defined as
\(
    F \mapsto \mathcal{G}_F(\mathbf{Z}) := \left( \tilde{J}(Z_1+F), \cdots \tilde{J}(Z_N+F)\right).
\)
Let $C^N = \underset{N~\text{times}}{\underbrace{C\times\cdots\times C}}$, and for each $ \boldsymbol\xi  \in C^N$ define the set $\mathcal{B} \equiv \mathcal{B}( \boldsymbol\xi ) := \{\mathcal{G}_F( \boldsymbol\xi ) : F \in \mathcal{F}\} \subseteq \mathbb R^N$, and the pseudometric $d $ for each $(a,b) \in \mathcal{B} \times \mathcal{B} : \to [0,\infty)$
\begin{align}~\label{eq:pmetric}
    d(a,b) = \frac{1}{\sqrt{N}} \|K_{ \boldsymbol\xi }\|_{p} \|F_a - F_b\|_{\infty},
\end{align}
where $F_a,F_b \in \mathcal F$ correspond to $a,b$ (respectively) through the map $\mathcal{G}_\cdot$, for any $v \in \mathbb R^N$, $\|v\|_p := \left(\sum_{i=1}^N |v_i|^p\right)^{1/p}$ and $K_{ \boldsymbol\xi } = (K_{\xi_1},\cdots,K_{\xi_N})$ is the vector of  Lipschitz constants in Assumption~\ref{ass:LipschitzinF}.

Next, let $\{\mathcal{Y}_F(\mathbf Z) : F \in \mathcal{F}\}$ be the real-valued random field defined as
\begin{align}
    \mathcal{Y}_F(\mathbf Z) := \frac{1}{\sqrt{N}} \sum_{i=1}^N g_i \tilde{J}(Z_i+F),
\end{align}
where $g$ is a $N$-dimensional standard Gaussian random vector, as before. 

The next lemma shows that $\{\mathcal{Y}_F(\mathbf Z) : F \in \mathcal F\}$ satisfies a sub-Gaussian concentration inequality, when conditioned on $\mathbf Z$.

\begin{lemma}~\label{lem:sub-gauss}
For any $F, G \in \mathcal{F}$ such that $F\neq G$ we have 
\(
    \mathbb P \left( \left| \mathcal{Y}_{F}(\mathbf{Z}) - \mathcal{Y}_G(\mathbf{Z})\right| > u \right | \mathbf Z =  \boldsymbol\xi ) \leq 2 \exp \left( - \frac{u^2}{2 L^2 d(\mathcal{G}_F( \boldsymbol\xi ),\mathcal{G}_G( \boldsymbol\xi ))^2} \right),
\)
where 
 \(   L = \underset{y \in \mathbb R^N : \|y\|_{2} = 1}{\sup} \|y\|_{q} = 1 ~\text{for}~q \geq 2.\)
\end{lemma}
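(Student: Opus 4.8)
\textbf{Proof plan for Lemma~\ref{lem:sub-gauss}.}

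The plan is to exhibit the increment $\mathcal{Y}_F(\mathbf Z) - \mathcal{Y}_G(\mathbf Z)$, conditionally on $\mathbf Z = \boldsymbol\xi$, as a linear combination of the i.i.d. standard Gaussians $g_1,\dots,g_N$, and then apply the standard one-dimensional Gaussian tail bound. Concretely, conditional on $\mathbf Z = \boldsymbol\xi$, we have
\[
  \mathcal{Y}_F(\boldsymbol\xi) - \mathcal{Y}_G(\boldsymbol\xi) = \frac{1}{\sqrt N} \sum_{i=1}^N g_i \bigl( \tilde J(\xi_i + F) - \tilde J(\xi_i + G) \bigr) =: \sum_{i=1}^N g_i c_i,
\]
with deterministic coefficients $c_i := N^{-1/2}\bigl(\tilde J(\xi_i+F) - \tilde J(\xi_i+G)\bigr)$. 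This is a mean-zero Gaussian random variable with variance $\sigma^2 = \sum_{i=1}^N c_i^2 = \|c\|_2^2$, so the classical bound gives $\mathbb P(|\mathcal{Y}_F - \mathcal{Y}_G| > u \mid \mathbf Z = \boldsymbol\xi) \le 2\exp(-u^2/(2\sigma^2))$. It then remains to upper bound $\sigma^2$ by $L^2 d(\mathcal{G}_F(\boldsymbol\xi),\mathcal{G}_G(\boldsymbol\xi))^2$.

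For the variance bound, first apply Assumption~\ref{ass:LipschitzinF} coordinatewise: $|\tilde J(\xi_i+F) - \tilde J(\xi_i + G)| \le K_{\xi_i}\|F-G\|_\infty$, hence $|c_i| \le N^{-1/2} K_{\xi_i}\|F-G\|_\infty$. Thus, writing $a = \mathcal{G}_F(\boldsymbol\xi)$ and $b = \mathcal{G}_G(\boldsymbol\xi)$ (so $a_i - b_i = \tilde J(\xi_i+F) - \tilde J(\xi_i+G)$ and $F_a = F$, $F_b = G$), we get
\[
  \sigma^2 = \|c\|_2^2 = \frac{1}{N}\sum_{i=1}^N (a_i - b_i)^2 = \frac{1}{N}\|a-b\|_2^2 .
\]
Now relate $\|a-b\|_2$ to $\|a-b\|_p$ (the norm implicitly appearing through $d$, via $\|K_{\boldsymbol\xi}\|_p$): by the definition of $L$ as the operator norm of the identity from $(\mathbb R^N,\|\cdot\|_2)$ to $(\mathbb R^N,\|\cdot\|_q)$ — and by the $\ell^2$--$\ell^p$ norm comparison — one has $\|a-b\|_2 \le L\,\|K_{\boldsymbol\xi}\|_p\,\|F-G\|_\infty$ after bounding each $|a_i-b_i| \le K_{\xi_i}\|F-G\|_\infty$ and collecting the $K_{\xi_i}$ into $\|K_{\boldsymbol\xi}\|_p$. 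Dividing by $\sqrt N$ recovers exactly $\sigma \le L\, d(a,b)$ with $d$ as in~\eqref{eq:pmetric}, and plugging this into the Gaussian tail bound completes the argument.

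The only genuinely delicate point is the bookkeeping that ties the exponent $p$ in Assumption~\ref{ass:LipschitzinF} (and in the pseudometric $d$) to the constant $L$ defined via the conjugate exponent $q$: one must be careful that the chain of inequalities $\|a-b\|_2 \le (\text{something})\le L\|K_{\boldsymbol\xi}\|_p\|F-G\|_\infty$ uses Hölder / norm-monotonicity in the correct direction, since for $p \ge 2$ we have $\|v\|_p \le \|v\|_2$ rather than the reverse. The statement $L = \sup_{\|y\|_2 = 1}\|y\|_q = 1$ for $q \ge 2$ is exactly what makes this work: it encodes that passing from the $\ell^2$-norm controlling the Gaussian variance to the $\ell^p$-based pseudometric costs only a factor of $1$ when $p\ge 2$. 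Everything else is a routine application of the scalar Gaussian concentration inequality conditionally on $\mathbf Z$. $\hfill\Box$
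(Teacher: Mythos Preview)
Your approach is correct and is genuinely different from the paper's. The paper applies H\"older's inequality to write $|\mathcal{Y}_F(\mathbf Z)-\mathcal{Y}_G(\mathbf Z)|\le N^{-1/2}\|g\|_q\,\|\mathcal{G}_F(\mathbf Z)-\mathcal{G}_G(\mathbf Z)\|_p$, bounds the $\ell^p$ factor via Assumption~\ref{ass:LipschitzinF}, and then invokes Gaussian Lipschitz concentration (Theorem~5.6 in Boucheron et al.) for the $L$-Lipschitz map $g\mapsto\|g\|_q$. You instead observe that, conditionally on $\mathbf Z=\boldsymbol\xi$, the increment is an exact centered Gaussian and apply the scalar Gaussian tail bound directly, which is more elementary and sidesteps the appeal to an external concentration theorem.

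One expository slip to clean up: your final paragraph conflates the regimes. The identity $L=1$ for $q\ge 2$ corresponds to $p\le 2$, not $p\ge 2$; for $p\ge 2$ one has $q\le 2$ and $L=N^{1/q-1/2}=N^{1/2-1/p}>1$. What actually makes your variance bound $\sigma\le L\,d(\mathcal{G}_F(\boldsymbol\xi),\mathcal{G}_G(\boldsymbol\xi))$ go through for \emph{every} $p$ is duality: since $L=\sup_{\|y\|_2=1}\|y\|_q$ is the operator norm of the identity $(\mathbb R^N,\|\cdot\|_2)\to(\mathbb R^N,\|\cdot\|_q)$, it equals the operator norm of the identity $(\mathbb R^N,\|\cdot\|_p)\to(\mathbb R^N,\|\cdot\|_2)$, so $\|K_{\boldsymbol\xi}\|_2\le L\,\|K_{\boldsymbol\xi}\|_p$. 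With that one line made explicit, your argument is complete and arguably cleaner than the paper's.
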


Equiconvegrence property also depends on the geometric spread of the feasible set $\mathcal{F}$. The following result provides the required diameter bound. Recall that $\mathcal F$ is a convex, compact subset of $C$. 
We have the following easy lemma.
\begin{lemma}~\label{ass:diameter}
    $\mathcal{F}$ has a finite diameter. That is, $diam(\mathcal F) := \sup_{F_1, F_2 \in \mathcal{F}} \|F_1 - F_2\|_\infty < +\infty$.
\end{lemma}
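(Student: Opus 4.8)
The plan is to exploit compactness of $\mathcal{F}$ directly: a continuous function on a compact set attains its supremum, and the diameter is exactly the supremum of a continuous function on the compact set $\mathcal{F} \times \mathcal{F}$.

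First I would note that $\mathcal{F}$ is, by hypothesis, a compact subset of the normed space $C$, hence $\mathcal{F} \times \mathcal{F}$ is compact in the product topology (a finite product of compacts is compact). Second, the map $(F_1, F_2) \mapsto \|F_1 - F_2\|_\infty$ is continuous on $C \times C$ (the norm is $1$-Lipschitz, and subtraction is continuous), so its restriction to $\mathcal{F} \times \mathcal{F}$ is continuous. Third, by the extreme value theorem a continuous real-valued function on a nonempty compact set attains a finite maximum; therefore
\begin{align*}
    \mathrm{diam}(\mathcal{F}) = \sup_{F_1, F_2 \in \mathcal{F}} \|F_1 - F_2\|_\infty = \max_{F_1, F_2 \in \mathcal{F}} \|F_1 - F_2\|_\infty < +\infty,
\end{align*}
which is the claim. (If one prefers to avoid invoking compactness of the product, an alternative is: compact subsets of a normed space are bounded, so $\mathcal{F} \subseteq B(0, R)$ for some finite $R$, and then $\|F_1 - F_2\|_\infty \le \|F_1\|_\infty + \|F_2\|_\infty \le 2R$ for all $F_1, F_2 \in \mathcal{F}$, giving $\mathrm{diam}(\mathcal{F}) \le 2R < \infty$.)

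There is essentially no obstacle here; the only thing to be slightly careful about is that compactness in $C$ — which is an infinite-dimensional space — is genuine (sequential/topological) compactness rather than mere closedness and boundedness, and this is exactly what is assumed. The boundedness half of that (total boundedness implies boundedness) is all that is needed, so the lemma is immediate either way. I would present the short product-compactness argument as the main proof and perhaps remark on the elementary boundedness argument as an aside.
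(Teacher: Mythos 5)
Your proof is correct and matches the paper's (implicit) reasoning: the paper states this as an ``easy lemma'' following immediately from the standing assumption that $\mathcal{F}$ is a compact subset of the normed space $C$, and gives no further argument. Either of your two routes --- the extreme value theorem on $\mathcal{F}\times\mathcal{F}$, or the more elementary observation that a compact (hence bounded) set satisfies $\mathcal{F}\subseteq B(0,R)$ so that $\mathrm{diam}(\mathcal{F})\le 2R$ --- suffices, with the second being the minimal argument the paper intends.
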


\begin{proposition}~\label{thm:gauss-complex}
Suppose Assumption~\ref{ass:tildeJ} and Assumption~\ref{ass:covering} hold, then, for any $F_0 \in \mathcal F$, there exists a constant $0 < \mathtt{C} < +\infty$ such that
\begin{align}~\label{eq:gauss-complex1}
    &\mathbb E_{g} \left[ \sup_{F \in \mathcal F} |\mathcal{Y}_F(\mathbf Z) - \mathcal{Y}_{F_0}(\mathbf Z)| \bigg | \mathbf Z =  \boldsymbol\xi  \right] \\\nonumber&\qquad\leq \frac{ \mathtt{C} \|K_{ \boldsymbol\xi }\|_{p}}{\sqrt{N}} \left(\frac{1}{2} diam(\mathcal F) \right)^{\frac{\alpha-1}{\alpha}}.
\end{align}
\end{proposition}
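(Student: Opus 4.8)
The goal is to bound the expected supremum of the centered Gaussian process $\mathcal{Y}_F(\mathbf Z) - \mathcal{Y}_{F_0}(\mathbf Z)$, conditioned on $\mathbf Z = \boldsymbol\xi$. The natural tool is Dudley's entropy integral bound for sub-Gaussian processes: for a process indexed by a (pseudo)metric space $(\mathcal B, d)$ with sub-Gaussian increments with respect to $d$, one has
\[
\mathbb E_g\left[\sup_{a \in \mathcal B} |\mathcal Y_a - \mathcal Y_{a_0}|\right] \;\le\; \mathtt{C}_0 \int_0^{\mathrm{diam}(\mathcal B)} \sqrt{\log N(\eta, \mathcal B, d)}\; d\eta.
\]
Lemma~\ref{lem:sub-gauss} supplies exactly the sub-Gaussian increment hypothesis, with the pseudometric $L\, d(\cdot,\cdot)$ where $d$ is defined in~\eqref{eq:pmetric}; and since $L = 1$ (for $q = 2$, which is the relevant choice), the effective metric is $d$ itself. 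So the plan is: (1) invoke Dudley's bound with respect to $d$; (2) translate covering numbers of $(\mathcal B, d)$ into covering numbers of $(\mathcal F, \|\cdot\|_\infty)$ via the linear relation $d(\mathcal G_{F_a}(\boldsymbol\xi), \mathcal G_{F_b}(\boldsymbol\xi)) = \tfrac{1}{\sqrt N}\|K_{\boldsymbol\xi}\|_p \|F_a - F_b\|_\infty$; (3) plug in Assumption~\ref{ass:covering}, namely $\log N(\epsilon, \mathcal F, \|\cdot\|_\infty) \le \epsilon^{-1/\alpha}$; and (4) evaluate the resulting entropy integral, which converges precisely because $\alpha > 1$, and collect constants.

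\textbf{Carrying out the steps.} For step (2), note that a $\eta$-cover of $\mathcal B$ in the metric $d$ corresponds exactly to an $\epsilon$-cover of $\mathcal F$ in $\|\cdot\|_\infty$ with $\epsilon = \sqrt{N}\,\eta / \|K_{\boldsymbol\xi}\|_p$, so
\[
\log N(\eta, \mathcal B, d) \;\le\; \log N\!\left(\tfrac{\sqrt N \,\eta}{\|K_{\boldsymbol\xi}\|_p}, \mathcal F, \|\cdot\|_\infty\right) \;\le\; \left(\tfrac{\sqrt N\, \eta}{\|K_{\boldsymbol\xi}\|_p}\right)^{-1/\alpha}.
\]
The diameter of $\mathcal B$ in $d$ is $\tfrac{1}{\sqrt N}\|K_{\boldsymbol\xi}\|_p\,\mathrm{diam}(\mathcal F)$, which is finite by Lemma~\ref{ass:diameter}. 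Substituting into Dudley's integral and changing variables to $\epsilon = \sqrt N\,\eta/\|K_{\boldsymbol\xi}\|_p$ (so $d\eta = \tfrac{\|K_{\boldsymbol\xi}\|_p}{\sqrt N}\,d\epsilon$), the bound becomes
\[
\mathtt{C}_0 \cdot \frac{\|K_{\boldsymbol\xi}\|_p}{\sqrt N} \int_0^{\mathrm{diam}(\mathcal F)} \epsilon^{-1/(2\alpha)}\, d\epsilon
\;=\; \mathtt{C}_0 \cdot \frac{\|K_{\boldsymbol\xi}\|_p}{\sqrt N}\cdot \frac{2\alpha}{2\alpha - 1}\,\mathrm{diam}(\mathcal F)^{\frac{2\alpha-1}{2\alpha}}.
\]
Since $\alpha > 1$ we have $1/(2\alpha) < 1$, so the integral is finite; absorbing $\mathtt{C}_0 \cdot \tfrac{2\alpha}{2\alpha-1}$ and the exponent-bookkeeping into a single constant $\mathtt{C}$ and writing the diameter term as $\left(\tfrac12\mathrm{diam}(\mathcal F)\right)^{(\alpha-1)/\alpha}$ up to a constant factor (the two exponents $\tfrac{2\alpha-1}{2\alpha}$ and $\tfrac{\alpha-1}{\alpha}$ differ, but both are controlled by $\mathrm{diam}(\mathcal F)$ which is a fixed finite constant, so the discrepancy is harmless and can be folded into $\mathtt{C}$) yields exactly~\eqref{eq:gauss-complex1}.

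\textbf{Main obstacle.} The one subtlety worth care is matching the exponent in the stated bound. The clean Dudley computation produces the exponent $\tfrac{2\alpha-1}{2\alpha}$, whereas~\eqref{eq:gauss-complex1} advertises $\tfrac{\alpha-1}{\alpha}$; reconciling these requires either a slightly different handling of the entropy integral (e.g.\ bounding $\log N(\eta,\mathcal B,d) \le \log N(\eta,\mathcal B,d)$ more crudely, or splitting the integral at a threshold) or simply accepting that for a bounded diameter the two forms differ only by a constant depending on $\mathrm{diam}(\mathcal F)$, which is legitimate since $\mathtt{C}$ is allowed to depend on the fixed data of the problem. A second minor point is confirming that $L = 1$ is the correct constant to use, i.e.\ that the sub-Gaussian parameter in Lemma~\ref{lem:sub-gauss} with $q = 2$ gives increments sub-Gaussian with respect to $d$ itself with no extra factor; this is immediate from $\sup_{\|y\|_2 = 1}\|y\|_2 = 1$. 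Beyond these bookkeeping matters, the argument is a routine application of Dudley's inequality and the covering-number hypothesis, so I do not anticipate a genuine difficulty.
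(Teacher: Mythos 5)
Your proposal is correct and follows essentially the same route as the paper's own proof: Dudley's entropy integral applied to the sub-Gaussian field of Lemma~\ref{lem:sub-gauss}, translation of the cover of $(\mathcal B,d)$ into a cover of $(\mathcal F,\|\cdot\|_\infty)$ (the paper isolates this as Lemma~\ref{lem:e-cover}), a change of variables, and Assumption~\ref{ass:covering}. The exponent discrepancy you flag is real --- the paper's displayed bound $\frac{\alpha}{\alpha-1}\left(\tfrac12\mathrm{diam}(\mathcal F)\right)^{(\alpha-1)/\alpha}$ corresponds to integrating $\epsilon^{-1/\alpha}$ rather than $\sqrt{\epsilon^{-1/\alpha}}=\epsilon^{-1/(2\alpha)}$, i.e.\ the square root on the metric entropy is silently dropped --- and your fix of absorbing the ratio $\left(\tfrac12\mathrm{diam}(\mathcal F)\right)^{1/(2\alpha)}$ into $\mathtt{C}$ is legitimate because Lemma~\ref{ass:diameter} guarantees the diameter is a fixed finite constant.
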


\newcommand{\dsg}{\Delta_{\text{SG}}}
Recall that the sub-Gaussian diameter for a metric probability space $(\mathcal{X}, d,\pi)$ with metric $d$ and measure $\pi$ is defined as $\dsg^2(\mathcal{X}) := \sigma^*(\mathcal{H})$ where $\sigma^*(\mathcal{H})$ is the smallest $\sigma$ that satisfies 
\(
    \mathbb E \left[e^{\lambda \mathcal{H}} \right]\leq e^{\sigma^2\lambda^2/2},\,\lambda\in\mathbb R,
\)
$\mathcal{H}:= \epsilon d(X,X')$ is the symmetrized distance on the metric space $\mathcal{X}$, $\epsilon = \pm 1$ with probability 1/2 and $X,X'$ are $\mathcal{X}$-valued random variables with measure $\pi$. 

Next, we prove equiconvergence of the SAA objective over the feasible space $\mathcal{F}$ via a sub-Gaussian McDiarmid-type inequality.

\begin{proposition}~\label{thm:equiconv}
    Let $\mathbf{Z} = (Z_1,\cdots,Z_N)$ be $N$ i.i.d. random variables with measure $\pi^x$. Suppose the cost function satisfies Assumption~\ref{ass:LipschitzinZ}. Suppose that the metric probability space $(C,\|\cdot\|_\infty,\pi^x)$ satisfies $\dsg(C) < +\infty$. Then for any $F\in\mathcal F$ and  $\delta>0$, with probability at least $1-\delta$, we have 
    \begin{equation}\label{eq:sample-complex1}
       \begin{aligned}
        J(F) &\leq \frac{1}{N} \sum_{i=1}^N \tilde J(Z_i+F)\\
        &\qquad+\mathbb E \left[\sup_{G \in \mathcal{F}} \left \{J(G) - \frac{1}{N} \sum_{i=1}^N \tilde J(Z_i+G) \right\} \right]\\
        &\qquad+\left(\frac{2\kappa^2\dsg^2(C) \log(1/\delta)}{N}\right)^{1/2}.
    \end{aligned}
    \end{equation}
\end{proposition}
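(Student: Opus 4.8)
The plan is to apply a sub-Gaussian version of the bounded-differences (McDiarmid) inequality to the function $\Phi(\mathbf Z) := \sup_{G\in\mathcal F}\{J(G) - \tfrac1N\sum_{i=1}^N \tilde J(Z_i+G)\}$, and then to observe that $J(F) - \tfrac1N\sum_i\tilde J(Z_i+F)\le\Phi(\mathbf Z)$ for every fixed $F\in\mathcal F$, so a high-probability upper bound on $\Phi$ transfers verbatim to the quantity we care about.

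First I would verify the stability (bounded-differences-in-a-sub-Gaussian-sense) condition for $\Phi$. Replacing the $i$-th coordinate $Z_i=\xi_i$ by $Z_i'=\xi_i'$ changes $\Phi$ by at most $\sup_{G\in\mathcal F}\tfrac1N|\tilde J(\xi_i+G)-\tilde J(\xi_i'+G)|$, which by Assumption~\ref{ass:LipschitzinZ} is at most $\tfrac{\kappa}{N}\|\xi_i-\xi_i'\|_\infty$. Thus the increment of $\Phi$ along coordinate $i$ is controlled by $\tfrac{\kappa}{N}$ times the metric distance on $(C,\|\cdot\|_\infty)$; since that metric probability space has finite sub-Gaussian diameter $\dsg(C)$, each such coordinatewise increment is a $(\kappa\,\dsg(C)/N)$-sub-Gaussian random variable in the sense recalled just before the statement. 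This is the step I expect to be the main obstacle: one must be careful that the ``bounded differences'' are replaced by a genuine sub-Gaussian tail control on the symmetrized coordinate increments, and invoke the correct form of the sub-Gaussian McDiarmid inequality (e.g.\ the entropy-method / Kontorovich-type version for metric spaces), rather than the classical bounded-differences inequality which would require an a.s.\ bound on $\|Z_i-Z_i'\|_\infty$.

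Granting the sub-Gaussian stability with parameter $\kappa\,\dsg(C)/N$ per coordinate, the sub-Gaussian McDiarmid inequality yields
\begin{equation*}
\mathbb P\!\left(\Phi(\mathbf Z) - \mathbb E[\Phi(\mathbf Z)] > t\right) \le \exp\!\left(-\frac{t^2}{2N(\kappa\,\dsg(C)/N)^2}\right) = \exp\!\left(-\frac{Nt^2}{2\kappa^2\dsg^2(C)}\right).
\end{equation*}
Setting the right-hand side equal to $\delta$ gives $t = \big(2\kappa^2\dsg^2(C)\log(1/\delta)/N\big)^{1/2}$, so with probability at least $1-\delta$, $\Phi(\mathbf Z) \le \mathbb E[\Phi(\mathbf Z)] + \big(2\kappa^2\dsg^2(C)\log(1/\delta)/N\big)^{1/2}$. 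Finally, for any fixed $F\in\mathcal F$ we have $J(F) - \tfrac1N\sum_{i=1}^N\tilde J(Z_i+F)\le\Phi(\mathbf Z)$ by definition of the supremum, and combining the last two displays gives exactly~\eqref{eq:sample-complex1}. The expectation term $\mathbb E[\sup_{G\in\mathcal F}\{J(G)-\tfrac1N\sum_i\tilde J(Z_i+G)\}]$ is left as is here; it is the quantity controlled by the Gaussian-complexity bound of Proposition~\ref{thm:gauss-complex} (via a standard symmetrization argument), which is how Theorem~\ref{thm:equiconv2} will ultimately be assembled.
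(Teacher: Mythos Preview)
Your proposal is correct and follows essentially the same route as the paper: define $\Phi(\mathbf Z)=\sup_{G\in\mathcal F}\{J(G)-\tfrac1N\sum_i\tilde J(Z_i+G)\}$, use Assumption~\ref{ass:LipschitzinZ} to show that changing one coordinate perturbs $\Phi$ by at most $\tfrac{\kappa}{N}\|\xi_i-\xi_i'\|_\infty$, and then invoke Kontorovich's sub-Gaussian McDiarmid inequality (the paper's Theorem~\ref{thm:kont}) to get the stated tail bound. The only cosmetic difference is that the paper packages the coordinate-wise bounded differences into a single $\tfrac{\kappa}{N}$-Lipschitz statement for $\varphi$ on $C^N$ with the $\ell_1$-product metric (via a telescoping argument) before appealing to Theorem~\ref{thm:kont}, whereas you state the per-coordinate control directly; the two formulations are equivalent inputs to the same concentration result.
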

\noindent{\it Remark:} We note that the assumption that $\dsg(C) < +\infty$ is reasonable -- for instance, it is satisfied in the case where $\pi^x$ is the Wiener measure.

So far we have stated all supporting results to prove the main theorem.
\begin{proof}{Proof of Theorem~\ref{thm:equiconv2}}
From~\eqref{eq:gauss-complex1} in  Proposition~\ref{thm:gauss-complex}, we have
\begin{align}\label{Y_F-bound}
    &\nonumber\frac{ \mathtt{C} \|K_{ \boldsymbol\xi }\|_{p}}{\sqrt{N}} \left(\frac{1}{2} diam(\mathcal F) \right)^{\frac{\alpha-1}{\alpha}}
    \\&\nonumber\qquad\geq \mathbb E_{g} \left[ \sup_{F \in \mathcal F} |\mathcal{Y}_F(\mathbf Z) - \mathcal{Y}_{F_0}(\mathbf Z)| \bigg | \mathbf Z =  \boldsymbol\xi  \right]
    \\&\nonumber\qquad\geq \mathbb E_{g} \left[ \sup_{F \in \mathcal F}\{\mathcal{Y}_F(\mathbf Z)\} - \mathcal{Y}_{F_0}(\mathbf Z) \bigg | \mathbf Z =  \boldsymbol\xi  \right]
    \\&\qquad = \mathbb E_{g} \left[ \sup_{F \in \mathcal F}\{\mathcal{Y}_F(\mathbf Z)\} \bigg | \mathbf Z =  \boldsymbol\xi  \right].
\end{align}
Recall that the Gaussian complexity $\mathcal{R}_N(\mathcal F)$ of the SAA over the function space $\mathcal{F}$ is defined as
\begin{align}~\label{eq:gc-def}
    \mathcal{R}_N(\mathcal F) := \mathbb E_{g,\pi^x}\left[ \sup_{F \in \mathcal{F}} \left\{ \frac{1}{N} \sum_{i=1}^N g_i \tilde{J}(Z_i+F) \right\}\right],
\end{align}
where the expectation is taken with respect to the Gaussian random vector $g\sim \mathcal{N}(0,I_{N\times N})$, which is independent of the i.i.d. samples $\mathbf{Z} := (Z_1, \cdots, Z_N)$ drawn from $\pi^x$.
It follows from \eqref{Y_F-bound} and \eqref{eq:gc-def} that $\mathcal{R}_N(\mathcal F)$ satisfies:
\begin{align*}
    \mathcal{R}_N(\mathcal F) &:= \mathbb E_{g,\pi^x}\left[ \sup_{F \in \mathcal{F}} \left\{ \frac{1}{N} \sum_{i=1}^N g_i \tilde{J}(Z_i+F) \right\}\right]
    \\&\quad = \mathbb E_{g,\pi^x}\left[ \sup_{F \in \mathcal{F}} \frac{1}{\sqrt{N}}\mathcal{Y}_F(\mathbf Z)\right]
    \\&\quad = \mathbb E_{\pi^x}\left[\frac{1}{\sqrt{N}}\mathbb E_{g} \left[ \sup_{F \in \mathcal F}\{\mathcal{Y}_F(\mathbf Z)\} \bigg | \mathbf Z =  \boldsymbol\xi  \right]\right]
    \\&\quad \leq \mathbb E_{\pi^x}\left[\frac{ \mathtt{C} \|K_{ \boldsymbol\xi }\|_{p}}{N} \left(\frac{1}{2} diam(\mathcal F) \right)^{\frac{\alpha-1}{\alpha}}\right].
\end{align*}
Evaluating the above expectation, we get:
\begin{align}~\label{eq:gc}
    \mathcal{R}_N(\mathcal{F})& < \frac{\mathtt C \, \mathbb E \left[ \|K_{\mathbf Z}\|_{p} \right]}{N}\left(\frac{1}{2} diam(\mathcal F) \right)^{\frac{\alpha-1}{\alpha}}
    \\ &\nonumber\quad = O\left(\frac{1}{N}\right),
\end{align}
provided $\mathbb E_{\pi^x}\left[ \|K_{\mathbf Z}\|_{p}\right] < +\infty$; this is a consequence of Assumption~\ref{ass:LipschitzinF}. By standard considerations (see~\cite{bartlett2002rademacher} for instance), it can be shown that
\(
    \mathbb E\left[ \sup_{F \in \mathcal{F}} \left\{ J(F) - \frac{1}{N} \sum_{i=1}^N \tilde{J}(Z_i+F) \right\} \right] \leq 2 \mathcal{R}_N(\mathcal F).
\) 
Therefore, from~\eqref{eq:sample-complex1} in Proposition~\ref{thm:equiconv}, we have 
\begin{align*} 
    J(F) &\leq \frac{1}{N} \sum_{i=1}^N \tilde J(Z_i+F) + 2 \mathcal{R}_N(\mathcal{F})\\&\qquad + O\left( \sqrt{\frac{\log(1/\delta)}{N}} \right),
\end{align*}
with $\mathcal{R}_N(\mathcal F) = O\left(\frac{1}{N}\right)$, dominated by the last term $O\left( \sqrt{\frac{\log(1/\delta)}{N}} \right)$.
 \end{proof}
Next, recall that the composed functional $\tilde J \circ \Gamma$ is $\kappa L_{\Gamma}$-Lipschitz. Consequently, the consistency result proved in Theorem~\ref{thm:equiconv2} holds for the composed functional as well. 

The equiconvergence result also guarantees the consistency of the optimizer. To see this, observe that as an immediate consequence of Theorem~\ref{thm:equiconv2}, we have~
\(
     | J^* - \tilde{J}_N^*| \stackrel{P}{\to} 0~\text{as}~N \to \infty,
\)
where $J^* := \inf_{F \in \mathcal{F}} J(F)$ and $\tilde J_N^* := \inf_{F \in \mathcal{F}} \frac{1}{N} \sum_{i=1}^N \tilde{J}\circ\Gamma(Z_i+F)$. Furthermore, let $\Pi_n^* := \arg\inf_{F \in \mathcal{F}} \frac{1}{N} \sum_{i=1}^N \tilde{J}\circ\Gamma(Z_i+F)$ and $\nu^* := \arg\inf_{F \in \mathcal{F}} J(F)$. Consider two scenarios, $J^*>\tilde J_N^*$ and $J^*<\tilde J_N^*$. In the former case, $|J^*-\tilde J_N^*|<|J(\Pi_n^* )-\tilde J_N^*|$. In the latter case,  $|J^*-\tilde J_N^*|<|\frac{1}{N} \sum_{i=1}^N \tilde{J}\circ\Gamma(Z_i+\nu^*)-J^*|$. Therefore
\begin{align*}
 &|J^*-\tilde J_N^*|\\
    &\leq\max\Bigg\{ \left|J(\Pi_n^* )-\tilde J_N^* \right|,\\
    &\quad\left|\frac{1}{N} \sum_{i=1}^N \tilde{J}\circ\Gamma(Z_i+\nu^*)-J^* \right|\Bigg\}\\
    &\leq |J(\Pi_n^* )-\tilde J_N^*|+\left|\frac{1}{N} \sum_{i=1}^N \tilde{J}\circ\Gamma(Z_i+\nu^*)-J^* \right|
    \stackrel{P}{\to} 0
\end{align*}
$~\text{as}~N \to \infty$ by Theorem~\ref{thm:equiconv2}.

\section{{Rate of Convergence of Finite SAA}}~\label{sec:rate}
Thus far we have assumed that the measure $\pi^x$ can be sampled without approximation. Furthermore, (MC-OPT) assumes that optimization can be performed over the function space $\mathcal F$. This is a purely theoretical construct, of course. In the next section we relax these conditions to propose an implementable method.

In the spirit of~\citep{stuart2010inverse}, to \textit{``...avoid discretization until the last possible moment...''}, our practicable finite SAA uses the pathwise directional derivative developed above together with mirror descent over a finite dimensional subspace of $\mathcal{F}$, to approximately solve the functional SAA problem. Our main theorem below provides a rate of convergence of the finite SAA solution to that of the functional SAA. We start by describing the various `discretizations' that are necessary for a practicable method.

\subsection{Feasible Space Approximation}
Recall that $\mathcal{F}$ is a compact subspace of the space of continuous functions on $[0,T]$. Let $\mathcal{F}_n$ denote an $n$-dimensional ($n < \infty$) closed subspace of $\mathcal{F}$ such that elements in $\mathcal{F}$ can be approached by a sequence of elements in $\mathcal{F}_n$, that is, for every $F \in \mathcal{F},$ there exists $\{F_n, n \geq 1\}, F_n \in \mathcal{F}_n$ such that $\|F_n -F\| \to 0$. An example of $\mathcal F_n$ is the span of the first $n$ Legendre polynomials~\citep[pp. 176]{1989kre} on the interval $[0,T]$. More generally, $\mathcal{F}_n$ can be chosen as the span of the first $n$ elements of any Schauder basis of $\mathcal{F}$. (Recall that a sequence $\{P_j, j \geq 1\}$ of vectors in a normed space $\mathcal{F}$ is called a \emph{Schauder basis} of $\mathcal{F}$ if for every $F\in \mathcal F$ there is a unique sequence $\{a_j, j \geq 1\}$ of scalars such that $\|F - \sum_{j=1}^{n} a_j P_j \| \to 0$ as $n \to \infty$.) Consequently, we assume that

\begin{assumption}\label{ass:distsubspace}
    The closed finite-dimensional function subspace $\mathcal F_n \subset \mathcal{F}$ is such that 
    \begin{equation} \label{ass:findimconv} \psi(n) := \sup_{F \in \mathcal{F}} \left\| F - \Pi_{\mathcal{F}_n}(F) \right\| = O(g(n)),
    \end{equation} 
where $g(n) \to 0$ as $n \to \infty.$ For example, if $\mathcal{F}_n$ is spanned by Legendre polynomials, we expect
$g(n) = \mathcal{O}\bigl(n^{-\alpha}\bigr)$ for some smoothness parameter~$\alpha$. 
\end{assumption}

\subsection{Stochastic Process Approximation}
We will assume that there exists a method for generating a sequence of stochastic processes $\{Z_h : h > 0\}$ that satisfies the following `weak convergence' condition.

\begin{assumption}~\label{ass:weak-order}
	The method used to generate paths $Z_{h} {\sim} \pi^{x}_{h}$ exhibits weak convergence order $\beta > 0$ to the process $Z\sim \pi^x$ as $h \to 0$, implying that there exists $\ell_1<\infty$ such that \begin{equation}\label{discbd}\sup_{F \in \mathcal{F}} \left|\mathbb{E}\left[\tilde{J}\circ\Gamma(F+Z_h)\right] - J(F)\right| \leq \ell_1 h^\beta.\end{equation} 
\end{assumption}

 In the case where $Z$ is a Brownian motion, $Z_h$ can be generated, for example, in the following ways.

\begin{itemize}
    \item 
    \it{Euler Schemes.} Both the Euler-Maruyama and Euler-Milstein schemes for solving stochastic differential equations~\cite[Ch. X]{Asmussen_2007} exhibit a weak convergence order of $h=1$, over the partition points $0 = t_0 < t_1 < t_2 < \cdots < t_{n-1} = T,$ where $h = h(n):= \max\{t_1 - t_0, t_2-t_1, \ldots, t_n - t_{n-1}\}$.

    \item
    \it{Wong-Zakai Approximation.} Suppose we use a piecewise smooth, path-by-path approximation of the Brownian motion, then the Wong-Zakai theorem~\citep{Twardowska1996} shows that the solution of the corresponding (controlled) ordinary differential equation approximates the Stratonovich solution of a given SDE, with a weak convergence order of $\beta=1$ under sufficient regularity assumptions on the coefficients. In our setting, the Ito and Stratonovich solutions coincide. A piecewise smooth approximation of Brownian motion can be computed, for instance, by using $N = \lceil 1/h\rceil$ terms in the Haar wavelet expansion in L\'evy's construction or more generally using smoother bases such as Daubechies wavelets~\citep{Grebenkov2016} or using a polynomial basis~\citep{foster2020optimal}.
\end{itemize}

With the above notation in place, the SAA problem (MC-$n$-OPT) approximating (OPT) is:
\begin{align}\label{sampathpb}
& \mbox{min.} \left\{ J_{N,h}(F) := \frac{1}{N} \sum_{j=1}^N \tilde{J}\circ \Gamma(Z_{h,j}+F)\right\}, \nonumber \\
& Z_{h,j} \overset{\scriptsize \mbox{iid}}{\sim} \pi^x_{h} \nonumber \\ & \mbox{s.t. } F \in \mathcal{F}_n. \tag{MC-$n$-OPT}
\end{align} 
For brevity, we will write $Z_{h,j}$ as $Z_h$ in the remainder of this section. To ensure theoretical guarantees on the solution quality of~\eqref{sampathpb} as an estimator to the solution to (OPT), we assume that \begin{assumption}\label{ass:Jtildeconv} The random functional 
$F \mapsto \tilde J\circ \Gamma (Z+F)$ is convex in $F$.
\end{assumption} 
We define the following optimal values and optimal solution (sets) corresponding to (OPT) and (MC-$n$-OPT), the existence of which will become evident. \begin{align}\label{morenot} J^* &:= \inf_{F \in \mathcal{F}} \{J(F)\}; \quad \mathcal{F}^* {:=} \underset{F \in \mathcal{F}}{\arg\inf} \{J(F)\} \\
J_{N,n}^* &:= \inf_{F \in \mathcal{F}_n} \{J_{N,h}(F)\}; \quad \mathcal{F}_{N,n}^* {:=} \underset{F \in \mathcal{F}_n}{\arg\inf} \{J_{N,h}(F)\}. \nonumber \end{align}
It is important that the optimization in (MC-$n$-OPT) be performed over a finite-dimensional subspace $\mathcal{F}_n$ of $\mathcal{F}$ so as to allow computation with methods such as gradient descent~\citep{2004nes}. Also, in~\eqref{morenot}, notice that we have suppressed the dependence of $J^*_{N,n}$ and $\mathcal{F}^*_{N,n}$ on $h$ used to generate Monte Carlo samples from the measure $\pi^x_{h}$. 

 We call any solution $F_{N,n}^* \in \mathcal{F}^*_{N,n}$ to~\eqref{sampathpb} an SAA estimator of the solution to (OPT). An SAA estimator cannot be obtained in ``closed analytical form" in general. However, given that~\eqref{sampathpb} is a deterministic convex optimization problem over a closed finite-dimensional subspace, one of various existing iterative techniques, e.g., mirror descent~\citep{2015bub}, can be used to generate a sequence $\{F_{N,n,k}^*, k \geq 1\} \subset \mathcal{F}_{N,n}$ that converges to a point in $\mathcal{F}_{N,n}^*$, that is, $F_{N,n,k}^* \to \mathcal{F}_{N,n}^*$ as $k \to \infty$ for fixed $N,n,h$.  Before we present the main result that characterizes the accuracy of $F^*_{N,n,k}$, we state a lemma that will be invoked. 

\begin{lemma}\label{lem:supvarbd} Let Assumption~\ref{ass:LipschitzinF} hold, and suppose there exists $F_0 \in \mathcal{F}$ such that \begin{equation}\label{finvar} \sigma_0^2(h) := \mbox{Var}(\tilde{J}\circ \Gamma(Z^{}_{h} + F_0)) < \infty; \quad Z_{h} \overset{\mbox{\scriptsize iid}}{\sim} \pi^{x}_{h}.\end{equation} Then, 
\begin{equation} \label{varbd} 
\begin{aligned}
&\sup_{F \in \mathcal{F}} \mbox{Var}(\tilde{J}\circ \Gamma(Z^{}_{h} + F_0))\\
&\qquad\leq \left(\sigma_0(h) + \mbox{diam}(\mathcal{F})\sqrt{\mathbb{E}[K_{Z} L_\Gamma^2]} \right)^2.   
\end{aligned}
\end{equation} 
\end{lemma}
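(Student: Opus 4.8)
The plan is to bound the standard deviation of $\tilde J\circ\Gamma(Z_h+F)$ for an arbitrary $F\in\mathcal F$ by comparing it to the standard deviation at the reference point $F_0$, exploiting that $\mathrm{sd}(\cdot)=\sqrt{\mathrm{Var}(\cdot)}$ is a (semi)norm on the space of square-integrable random variables and therefore satisfies the triangle inequality. Concretely, for fixed $F\in\mathcal F$ write
\[
\mathrm{sd}\bigl(\tilde J\circ\Gamma(Z_h+F)\bigr)\le \mathrm{sd}\bigl(\tilde J\circ\Gamma(Z_h+F_0)\bigr)+\mathrm{sd}\bigl(\tilde J\circ\Gamma(Z_h+F)-\tilde J\circ\Gamma(Z_h+F_0)\bigr),
\]
so the first term is exactly $\sigma_0(h)$ and it remains to control the second term uniformly in $F$.

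For the second term, first I would bound the variance of any random variable by its second moment, $\mathrm{Var}(W)\le \mathbb E[W^2]$, and then apply the pathwise Lipschitz estimate. By Assumption~\ref{ass:LipschitzinF} together with Assumption~\ref{ass:GammaLipschitz} (the consequence noted in the text that $\tilde J\circ\Gamma(z+F)$ is $K_z L_\Gamma$-Lipschitz in $F$ for fixed path $z$), we have pointwise in the sample path
\[
\bigl|\tilde J\circ\Gamma(Z_h+F)-\tilde J\circ\Gamma(Z_h+F_0)\bigr|\le K_{Z_h} L_\Gamma \|F-F_0\|_\infty \le K_{Z_h} L_\Gamma\,\mathrm{diam}(\mathcal F),
\]
using $F,F_0\in\mathcal F$. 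Squaring and taking expectations gives
\[
\mathrm{sd}\bigl(\tilde J\circ\Gamma(Z_h+F)-\tilde J\circ\Gamma(Z_h+F_0)\bigr)\le \mathrm{diam}(\mathcal F)\sqrt{\mathbb E[K_{Z_h}^2 L_\Gamma^2]},
\]
which is finite by the moment condition $\mathbb E[K_{Z_h}^p]<\infty$ in Assumption~\ref{ass:LipschitzinF} (with $p\ge 2$). Combining this with the triangle inequality bound, taking the supremum over $F\in\mathcal F$, and squaring both sides yields exactly the claimed inequality~\eqref{varbd} (matching the stated right-hand side, where $K_Z$ and $L_\Gamma$ play the roles of $K_{Z_h}$ and $L_\Gamma$).

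The only genuine subtlety — and hence the "main obstacle," though it is minor — is the justification that $\sqrt{\mathrm{Var}(\cdot)}$ obeys the triangle inequality on $L^2$; this is standard (it is the $L^2$ norm of the centered random variable, i.e. $\mathrm{sd}(W)=\|W-\mathbb E W\|_{L^2}$, and centering is linear), but one should note that $F\mapsto \tilde J\circ\Gamma(Z_h+F)$ lies in $L^2$ for every $F\in\mathcal F$: this follows because it differs from the $L^2$ random variable $\tilde J\circ\Gamma(Z_h+F_0)$ (square-integrable by~\eqref{finvar}) by a quantity bounded in absolute value by $K_{Z_h}L_\Gamma\,\mathrm{diam}(\mathcal F)$, which is in $L^2$ by the moment assumption. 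Everything else is a direct computation requiring no further elaboration.
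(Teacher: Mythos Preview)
Your proposal is correct and follows essentially the same approach as the paper: decompose $\tilde J\circ\Gamma(Z_h+F)$ as $\tilde J\circ\Gamma(Z_h+F_0)$ plus the difference, bound the difference pathwise via the Lipschitz property by $K_Z L_\Gamma\,\mathrm{diam}(\mathcal F)$, and then combine using the triangle inequality for the standard deviation. The paper's proof is terser (it just says ``use \eqref{initsplit} and \eqref{vardiff} along with \eqref{finvar} to conclude''), whereas you make the triangle-inequality step and the $L^2$-membership check explicit, but the underlying argument is the same.
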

\begin{proof}{Proof}  We can write \begin{align}\label{initsplit}\tilde{J}\circ \Gamma(Z^{}_{h} + F) &= \tilde{J}\circ \Gamma(Z^{}_{h} + F_0) + \tilde{J}\circ \Gamma(Z^{}_{h} + F)\\\nonumber &\qquad\qquad\qquad\quad - \tilde{J}\circ\Gamma(Z^{}_{h} + F_0),\end{align} and due to Assumption~\ref{ass:LipschitzinF} and Assumption~\ref{ass:GammaLipschitz},  \begin{equation}\label{tijtilde}\left|\tilde{J}\circ\Gamma(Z^{}_{h} + F) - \tilde{J}\circ\Gamma(Z^{}_{h} + F_0)\right| \leq K_{Z} L_\Gamma\, \mbox{diam}(\mathcal{F})\end{equation}
where $\mathbb{E}[K_{Z} L_\Gamma^2]<\infty.$ From~\eqref{tijtilde} we see that \begin{align}\label{vardiff}\mbox{Var}&(\tilde{J}\circ\Gamma(Z^{}_{h} + F) - \tilde{J}\circ\Gamma(Z^{}_{h} + F_0)) \\\nonumber &\qquad\qquad\qquad\qquad \leq \mathbb{E}\left[ K_{Z} L_\Gamma^2 \right]  \, \mbox{diam}^2(\mathcal{F}).\end{align} Use~\eqref{initsplit} and~\eqref{vardiff} along with~\eqref{finvar} to conclude that the assertion of the lemma holds. \end{proof}

We now present the main rate result governing the solution estimator $F^*_{N,n,k}$ of~\eqref{sampathpb}. 

\begin{theorem}~\label{thm:saarate} Suppose Assumptions~\ref{ass:LipschitzinF},~\ref{ass:weak-order},~\ref{ass:distsubspace} and~\ref{ass:Jtildeconv} hold. Furthermore, suppose mirror descent~\citep[pp. 80]{2015bub} is executed for $k$ steps on \eqref{sampathpb}: \begin{align} \label{seqest} 
\nabla \psi(G_{N,n,j+1}) &= \nabla \psi(F_{N,n,j}) - \eta \tilde{S}_{J_{N,h}}(F_{N,n,j});\\
&\quad j = 0,1,\ldots,k-1 \nonumber\\ 
F_{N,n,j+1} &= \sup_{w \in \mathcal{F}_n \cap \mathcal{D}} D_{\psi}(w,G_{N,n,j+1}); \nonumber \\
F^*_{N,n,k} &:= \frac{1}{k} \sum_{j=1}^k F_{N,n,j},\end{align}  where $\psi: \mathcal{D} \subset \mathcal{F} \to \mathbb{R}$ is a chosen $\rho$-strongly convex, mirror-map (see Definition~\ref{def:mirror}) with $\mathcal{F}_n \cap \mathcal{D} \neq \emptyset$, the Bregman divergence $D_{\psi}(w,w') := \psi(w) - \left(\psi(w') + \langle \nabla \psi(w'), w-w'\rangle \right),~\forall w,w' \in \mathcal{D},$ and the step size $\eta = \eta_0 \frac{R}{\bar{K}} \sqrt{\frac{2\rho}{k}},~\eta_0 \in (0,1)$ with $R^2 := \sup_{w \in \mathcal{F}_n \cap \mathcal{D}} \psi(w) - \psi(F_{N,n,0})$, and $\bar{K} := N^{-1} \sum_{j=1}^N K_{Z_j}$ is the i.i.d. sample mean of  Lipschitz constants $K_{Z_j}, j=1,2,\ldots,N$ appearing in Assumption~\ref{ass:LipschitzinF} satisfying $\sup_{F \in \mathcal{F}} \| \tilde{S}_{J_{N,h}}(F)\|_{*} \leq \bar{K}; 
\tilde{S}_{J_{N,h}}(F) \in \partial J_{N,h}(F);\quad \mathbb{E}[K^2_{Z_j}] <\infty,$ 
where $\tilde{S}_{J_{N,h}(F)}$ is a subgradient and $\partial J_{N,h}(F)$ the subdifferential of the convex functional $J_{N,h}$ at the point $F$.  Then, for all $k \geq 1$, \begin{equation}
\begin{aligned}
0 &\leq \mathbb{E}\left[ J(F^*_{N,n,k}) - J(F^*)\right]\\
&\quad\leq \frac{c_1}{\sqrt{k}} + \frac{c_2}{\sqrt{N}} + c_3h^{\beta} + c_4 g(n),    
\end{aligned}
\end{equation} 
where
\(
c_1 = \sqrt{\frac{2}{\rho}} \, \left(\mathbb{E}[R^2] \left( \frac{1}{k}\mbox{Var}(K_Z) + \mathbb{E}[K_Z^2]\right)\right)^{1/2},~c_2 = \frac{3 }{\sqrt{N}}\left(\mbox{diam}(\mathcal{F})\sqrt{\mathbb{E}[K_{Z} L_\Gamma^2] } + \sigma_0(h) \right),~ c_3 = \ell_1,~\emph{\mbox{ and }}~c_4 = \mathbb{E}\left[K_Z\right].
\)
\end{theorem}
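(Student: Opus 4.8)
The plan is to decompose the optimality gap $J(F^*_{N,n,k}) - J(F^*)$ into four additive error terms, one for each source of inexactness, and bound each separately. Writing $J^*_{N,n} = \inf_{F \in \mathcal{F}_n} J_{N,h}(F)$ and letting $\bar F_n \in \mathcal{F}_n$ be a near-projection of $F^*$ (so $\|\bar F_n - F^*\| \le \psi(n) = O(g(n))$), I would insert and subtract intermediate quantities as follows:
\begin{align*}
J(F^*_{N,n,k}) - J(F^*) &= \underbrace{\bigl(J(F^*_{N,n,k}) - J_{N,h}(F^*_{N,n,k})\bigr)}_{\text{(a) path disc.\ + sampling}} + \underbrace{\bigl(J_{N,h}(F^*_{N,n,k}) - J^*_{N,n}\bigr)}_{\text{(b) mirror descent}}\\
&\quad + \underbrace{\bigl(J^*_{N,n} - J_{N,h}(\bar F_n)\bigr)}_{\le 0} + \underbrace{\bigl(J_{N,h}(\bar F_n) - J(\bar F_n)\bigr)}_{\text{(c) path disc.\ + sampling at }\bar F_n} + \underbrace{\bigl(J(\bar F_n) - J(F^*)\bigr)}_{\text{(d) feasible-space approx.}}.
\end{align*}
The nonnegativity $0 \le \mathbb{E}[J(F^*_{N,n,k}) - J(F^*)]$ is immediate since $F^*_{N,n,k} \in \mathcal{F}$ and $F^*$ minimizes $J$ over $\mathcal{F}$.

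For term (b), the mirror-descent error, I would quote the standard mirror-descent regret bound (e.g.\ \citep[pp.~80]{2015bub}): with the stated step size $\eta = \eta_0 \frac{R}{\bar K}\sqrt{2\rho/k}$ and $\rho$-strong convexity of $\psi$, running $k$ steps on the convex function $J_{N,h}$ over $\mathcal{F}_n \cap \mathcal{D}$ with subgradients of dual norm at most $\bar K$ yields $J_{N,h}(F^*_{N,n,k}) - J^*_{N,n} \le R\bar K\sqrt{2/(\rho k)}$. Taking expectations and using $\bar K = N^{-1}\sum_j K_{Z_j}$, Cauchy--Schwarz gives $\mathbb{E}[R\bar K] \le (\mathbb{E}[R^2])^{1/2}(\mathbb{E}[\bar K^2])^{1/2}$, and $\mathbb{E}[\bar K^2] = \frac{1}{k}\mathrm{Var}(K_Z) + \mathbb{E}[K_Z]^2 \le \frac{1}{k}\mathrm{Var}(K_Z) + \mathbb{E}[K_Z^2]$ — wait, more carefully $\mathbb{E}[\bar K^2] = \frac{1}{N}\mathrm{Var}(K_Z) + \mathbb{E}[K_Z]^2$; I would reconcile this with the stated $c_1$, which suggests the intended bound uses $\mathbb{E}[\bar K^2]\le \frac1k\mathrm{Var}(K_Z)+\mathbb{E}[K_Z^2]$ or a similar crude estimate. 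This produces the $c_1/\sqrt{k}$ term. For terms (a) and (c), I would split each into a path-discretization piece and a Monte Carlo piece: $|J(F) - J_{N,h}(F)| \le |J(F) - \mathbb{E}J_{N,h}(F)| + |\mathbb{E}J_{N,h}(F) - J_{N,h}(F)|$. The first piece is uniformly $\le \ell_1 h^\beta$ by Assumption~\ref{ass:weak-order}, giving the $c_3 h^\beta$ term. The second piece is a centered sample average; bounding its expected absolute value by the square root of its variance, $N^{-1/2}\sup_{F}\sqrt{\mathrm{Var}(\tilde J\circ\Gamma(Z_h+F))}$, and applying Lemma~\ref{lem:supvarbd} gives a bound $\le N^{-1/2}(\sigma_0(h) + \mathrm{diam}(\mathcal{F})\sqrt{\mathbb{E}[K_Z L_\Gamma^2]})$, and collecting the (at most three) such terms from (a), (c) and any cross term yields $c_2/\sqrt{N}$. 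Finally term (d): by the $\mathbb{E}[K_Z]$-Lipschitz property of $J$ (from Assumption~\ref{ass:LipschitzinF} via $|J(F_1)-J(F_2)| \le \mathbb{E}[K_Z]\|F_1-F_2\|$) and $\|\bar F_n - F^*\| \le \psi(n) = O(g(n))$, we get $J(\bar F_n) - J(F^*) \le \mathbb{E}[K_Z]\,\psi(n) = c_4\, g(n)$.

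The main obstacle I anticipate is making term (a), $\mathbb{E}[J(F^*_{N,n,k}) - J_{N,h}(F^*_{N,n,k})]$, rigorous, because $F^*_{N,n,k}$ is itself a random function of the same samples $\{Z_{h,j}\}$ that define $J_{N,h}$, so one cannot simply pass the expectation inside. This is where the uniform control from Section~\ref{sec:equi} is essential: one must bound $\mathbb{E}[\sup_{F\in\mathcal{F}}(J(F) - J_{N,h}(F))]$ rather than the gap at a fixed point, and then invoke the Gaussian/Rademacher complexity bound $\mathcal{R}_N(\mathcal{F}) = O(1/N)$ from Proposition~\ref{thm:gauss-complex} together with the weak-order bound — the sup over $\mathcal{F}$ absorbs the data-dependence of $F^*_{N,n,k}$. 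The same uniform device handles the data-dependence hidden in term (b) through $R$ and $\bar K$. A secondary technical point is ensuring the chain of intermediate points ($F^*$, $\bar F_n$, $F^*_{N,n,k}$) all lie in the appropriate domains ($\mathcal{F}_n \cap \mathcal{D}$, etc.) so the mirror-descent guarantee and the Lipschitz/convexity inequalities genuinely apply; this is routine given Assumptions~\ref{ass:distsubspace} and~\ref{ass:Jtildeconv} and the definition of the mirror map, but should be stated. Assembling the four bounds and collecting constants into $c_1,\dots,c_4$ as displayed completes the proof.
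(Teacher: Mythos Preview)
Your decomposition and treatment of the four error terms match the paper's proof almost exactly; the paper inserts the same intermediate points (writing $F_n^*$ for your $\bar F_n$) and bounds the optimization, sampling, discretization, and projection errors just as you outline, arriving at the same constants. The one substantive difference is in how the sampling error at the data-dependent points $F^*_{N,n,k}$ and $F^*_{N,n}$ is handled: you flag the data-dependence and propose invoking the Gaussian-complexity equiconvergence of Section~\ref{sec:equi} to absorb the randomness via a $\sup_{F\in\mathcal F}$, whereas the paper simply applies Lemma~\ref{lem:supvarbd} (the uniform variance bound) at each of the three points and multiplies by $3/\sqrt{N}$, without passing to a supremum. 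Your concern is legitimate---Lemma~\ref{lem:supvarbd} alone does not control $\mathbb{E}\bigl[\sup_F|J_{N,h}(F)-\mathbb E J_{N,h}(F)|\bigr]$---so your route is the more careful one; but the paper's intended argument, and the stated $c_2$ with its factor of $3$, reflect the direct pointwise application of Lemma~\ref{lem:supvarbd}, not the machinery of Section~\ref{sec:equi}.
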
 

\begin{proof}{Proof} 

Observe that  
\begin{equation}
\begin{aligned} \label{fintimebd} 
0 &\leq J(F^*_{N,n,k}) - J(F^*) 
\\ & = J(F^*_{N,n,k}) - J_{N,h}(F^*_{N,n,k}) + J_{N,h}(F^*_{N,n,k})\\
&\qquad- J_{N,h}(F^*_{N,n}) + J_{N,h}(F^*_{N,n}) - J(F_{n}^*)\\
&\qquad+ J(F_{n}^*) - J(F^*) \\ 
& \leq
J_{N,h}(F^*_{N,n,k}) - J_{N,h}(F^*_{N,n}) \\
&\qquad +\sum_{F \in \{F^*_{N,n,k}, F^*_{N,n}, F_{n}^*\}} |J_{N,h}(F) - J(F)|\\
&\qquad + J(F_{n}^*) - J(F^*) \\ 
& \leq
\underbrace{J_{N,h}(F^*_{N,n,k}) - J_{N,h}(F^*_{N,n})}_{\mbox{\scriptsize opt. error}}\\
&\qquad+ \sum_{F \in \{F^*_{N,n,k}, F^*_{N,n}, F_{n}^*\}} \underbrace{|J_{N,h}(F) - \mathbb{E}\left[J_{N,h}(F)\right]|}_{\mbox{\scriptsize sampling error}}   \\
&\qquad+ \sum_{F \in \{F^*_{N,n,k}, F^*_{N,n}, F_{n}^*\}} \underbrace{|\mathbb{E}\left[J_{N,h}(F)\right] - J(F)|}_{\mbox{\scriptsize approx. error}}\\
&\qquad-\underbrace{S_J(F^*_{n}) \, \|F_{n}^* - F^*\|}_{\mbox{\scriptsize proj. error}},
\end{aligned}    
\end{equation}

where the penultimate inequality in~\eqref{fintimebd} follows from rearrangement of terms 
and the last inequality follows upon using the sub-gradient inequality \eqref{eq:subgrad} for the convex functional $J(\cdot)$. 
Now we quantify (in expectation) each of the error terms appearing on the right-hand side of~(\ref{fintimebd}). Applying mirror descent's complexity bound~\citep[pp. 80]{2015bub} on the $\bar{K}$-smooth function $J_{N,h}(\cdot)$ and taking expectation, we get 
\begin{equation}\label{expgradbound} 
\begin{aligned}
0 &\leq \mathbb{E}\left[ J_{N,h}(F^*_{N,n,k}) - J_{N,h}(F^*_{N,n})\right]\\&\leq \frac{1}{\sqrt{k}} \, \sqrt{\frac{2}{\rho}} \, \left(\mathbb{E}[R^2] \left( \frac{1}{k}\mbox{Var}(K_{z}) + \mathbb{E}[K_{z}^2]\right)\right)^{1/2}.    
\end{aligned}
\end{equation} Next, 
using Lemma~\ref{lem:supvarbd} we get the bound on approximation error in~(\ref{fintimebd}):  
\begin{equation}\label{mcerror} 
\begin{aligned}
&\mathbb{E}\left[\sum_{F \in \{F^*_{N,n,k}, F^*_{N,n}, F_{n}^*\}} \left | J_{N,h}(F) - \mathbb{E}\left[J_{N,h}(F)\right]\right | \right]\\
&\qquad\leq \frac{3 }{\sqrt{N}}\left(\mbox{diam}(\mathcal{F})\sqrt{\mathbb{E}[K_{Z} L_\Gamma^2]} + \sigma_0(h) \right).    
\end{aligned}
\end{equation}
Due to the assumption in~\eqref{discbd}, we have \begin{equation}\label{discbd2}\sup_{F \in \mathcal{F}} |\mathbb{E}\left[J_{N,h}(F)\right] - J(F)| \leq \ell_1 h^\beta.\end{equation} And since $J$ is convex, we see that
\begin{equation}\label{subspacebd} 
\begin{aligned}
J(F^*_n) - J(F^*) & \leq  \|S_J(F^*_n)\|_{*} \, \|F^*_n - F^*\| \\&\leq \sup_{F \in \mathcal{F}} \, \|S_J(F)\|_{*} \, \|F^*_n - F^*\|\\
&\leq \mathbb{E}\left[K_{z}\right] \, g(n),
\end{aligned}     
\end{equation}

where the last inequality in~\eqref{subspacebd} is from Assumption~\ref{ass:LipschitzinF}.  Now use~\eqref{expgradbound},~\eqref{mcerror},~\eqref{discbd2}, and~\eqref{subspacebd} to conclude.
 \end{proof}
Next, we illustrate this result by considering the computation of the pathwise directional derivative in Proposition~\ref{prop:Sko-pathwise}.

\begin{corollary}\label{cor:saarate}
In the setting of Theorem~\ref{thm:saarate}, suppose in addition that  the regulator is the Skorokhod map. If $\tilde{S}_{J_{N,h}(F)}$ is chosen to be the sample average gradient of the convex functional $J_{N,h}$ at the point $F$, i.e., for basis functions $\{P_j, 1\leq j\leq n\}$ spanning the subspace of choice, $\mathcal{F}_n$, let
\begin{align*}
&\tilde{S}_{J_{N,h}(F)}\\
&=\nabla \hat{\mathbb E}[\tilde{J}\circ\Gamma(Z_h + F)] \\
&=\Big[{D}_{P_1} \hat{\mathbb E}[\tilde{J}\circ\Gamma(Y)],\cdots,{D}_{P_n} \hat{\mathbb E}[\tilde{J}\circ\Gamma(Y)]\Big]^T,
\end{align*}
where
\begin{align*}
    {D}_u \hat{\mathbb E}[\tilde{J}\circ\Gamma(Y)] = \frac{1}{N}\sum_{i=1}^N D_u\tilde{J}\circ\Gamma(Y_i),
\end{align*}
\end{corollary}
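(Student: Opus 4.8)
The plan is to check that the explicit sample-average gradient proposed here is a legitimate instance of the subgradient $\tilde S_{J_{N,h}}$ demanded by Theorem~\ref{thm:saarate} — that it lies in $\partial J_{N,h}(F)$ and obeys the dual-norm bound $\sup_{F\in\mathcal F}\|\tilde S_{J_{N,h}}(F)\|_*\le\bar K$ — so that the rate estimate of Theorem~\ref{thm:saarate} applies verbatim to the iterate $F^*_{N,n,k}$ produced by~\eqref{seqest}; simultaneously Proposition~\ref{prop:Sko-pathwise} is invoked to exhibit the closed form of each summand $D_u\tilde J\circ\Gamma(Y_i)$, which is what renders the recursion implementable. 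All other hypotheses (weak convergence order, the finite-dimensional subspace condition, the finite-variance point $F_0$ of Lemma~\ref{lem:supvarbd}) are inherited from the setting of Theorem~\ref{thm:saarate}, so only the conditions on $\tilde S_{J_{N,h}}$ need verification.

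First I would establish that $J_{N,h}$ is convex and G\^ateaux differentiable on $\mathcal F_n$ and identify its gradient. Convexity is Assumption~\ref{ass:Jtildeconv} combined with the fact that $J_{N,h}$ is a finite average. For differentiability, Assumptions~\ref{ass:tildeJ} and~\ref{ass:gamma} feed Lemma~\ref{lem:chain-rule}, giving $D_u\tilde J\circ\Gamma(Z_{h,j}+F)=(D_{D_u\Gamma(Z_{h,j}+F)}\tilde J)(\Gamma(Z_{h,j}+F))$ for every direction $u$; averaging and using linearity of the G\^ateaux derivative in $u$ yields $D_uJ_{N,h}(F)=\frac1N\sum_{j=1}^N D_u\tilde J\circ\Gamma(Z_{h,j}+F)$. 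Since a convex, G\^ateaux-differentiable functional has its G\^ateaux derivative as the unique element of its subdifferential (Definition~\ref{def:subgradsubdiff}), writing $F=\sum_{i=1}^n a_iP_i$ and differentiating in the coordinates $(a_1,\dots,a_n)$ shows that the vector $[D_{P_1}\hat{\mathbb E}[\tilde J\circ\Gamma(Y)],\dots,D_{P_n}\hat{\mathbb E}[\tilde J\circ\Gamma(Y)]]^T$ equals $\nabla J_{N,h}(F)$ and hence belongs to $\partial J_{N,h}(F)$, as required.

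Next I would verify the norm bound. For the Skorokhod map $\Gamma$ is $L_\Gamma$-Lipschitz (Assumption~\ref{ass:GammaLipschitz}), so, as noted in the text following that assumption, $F\mapsto\tilde J\circ\Gamma(Z_{h,j}+F)$ is $K_{Z_j}L_\Gamma$-Lipschitz; therefore $\|\nabla(\tilde J\circ\Gamma)(Z_{h,j}+F)\|_*=\sup_{\|v\|=1}|D_v\tilde J\circ\Gamma(Z_{h,j}+F)|\le K_{Z_j}L_\Gamma$, and by the triangle inequality in $C^*$, $\|\nabla J_{N,h}(F)\|_*\le\frac1N\sum_{j=1}^N K_{Z_j}L_\Gamma$ uniformly in $F$, which is of the form $\sup_F\|\tilde S_{J_{N,h}}(F)\|_*\le\bar K$ required by Theorem~\ref{thm:saarate} once the constant Lipschitz factor $L_\Gamma$ is folded into the $K_{Z_j}$'s; the moment condition $\mathbb E[K_{Z_j}^2]<\infty$ is part of Assumption~\ref{ass:LipschitzinF}. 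It then remains to substitute the closed form of Proposition~\ref{prop:Sko-pathwise}: on the $\pi^x_h$-a.e.\ ``case~1'' event $D_u\Gamma(y)(t)=u(t)+\sup_{s\in\Phi_t(y)}\{-u(s)\}$ and on ``case~2'' $D_u\Gamma(y)(t)=u(t)$, with $\Phi_t(y)$ the argmin set of $y$ on $[0,t]$; feeding this into $D_u\tilde J\circ\Gamma(y)=(D_{D_u\Gamma(y)}\tilde J)(\Gamma(y))$ and then into $D_u\hat{\mathbb E}[\tilde J\circ\Gamma(Y)]=\frac1N\sum_{i=1}^N D_u\tilde J\circ\Gamma(Y_i)$ gives a sample-path-computable gradient for~\eqref{seqest}. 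With every hypothesis of Theorem~\ref{thm:saarate} verified for this concrete recursion, its conclusion $0\le\mathbb E[J(F^*_{N,n,k})-J(F^*)]\le c_1/\sqrt k+c_2/\sqrt N+c_3h^\beta+c_4g(n)$ follows immediately.

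I expect the only delicate step to be the identification between the abstract subgradient $\tilde S_{J_{N,h}}\in C^*$ of Theorem~\ref{thm:saarate} and the finite-dimensional coordinate gradient on $\mathcal F_n$: one must check that restricting the unique subgradient of $J_{N,h}$ to $\mathcal F_n$ and expressing it in the basis $\{P_i\}$ does not inflate the dual norm past $\bar K$ — which uses that $\mathcal F_n\subset\mathcal F$ carries the same sup-norm — and that the ``case~1/case~2'' dichotomy of Proposition~\ref{prop:Sko-pathwise} holds for $\pi^x_h$-almost every path, so the closed-form gradient is valid under the sampling measure. The remaining work (the differentiation above and the Lipschitz/moment bookkeeping) is routine.
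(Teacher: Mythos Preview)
Your verification that the sample-average gradient is a legitimate subgradient of $J_{N,h}$ and satisfies the dual-norm bound $\|\tilde S_{J_{N,h}}(F)\|_*\le\bar K$ is correct, and it does place you squarely inside the hypotheses of Theorem~\ref{thm:saarate}. But applying Theorem~\ref{thm:saarate} verbatim yields only the $c_1/\sqrt{k}$ optimization term, and that is \emph{not} the conclusion of the corollary. The corollary's conclusion (which, owing to a formatting slip in the paper, sits just after the \texttt{\textbackslash end\{corollary\}}) is the sharper bound
\[
0\le\mathbb E[J(F^*_{N,n,k})-J(F^*)]\le \frac{c_1}{k}+\frac{c_2}{\sqrt N}+c_3h^\beta+c_4 g(n),
\]
with a new constant $c_1=\mathbb E[R^2]/\rho$. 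The whole point of specializing to the Skorokhod regulator and the explicit pathwise gradient is that $J_{N,h}$ becomes genuinely \emph{differentiable} (via Lemma~\ref{lem:chain-rule} and Proposition~\ref{prop:Sko-pathwise}), not merely subdifferentiable, and this smoothness buys you a faster mirror-descent rate.

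The paper's proof therefore does \emph{not} invoke Theorem~\ref{thm:saarate} as a black box. It re-enters the decomposition~\eqref{fintimebd} and replaces the optimization-error estimate~\eqref{expgradbound} --- which uses the $O(1/\sqrt{k})$ rate for Lipschitz convex objectives --- by the $O(1/k)$ smooth-convex mirror-descent bound (\cite[Theorem~4.4]{2015bub}); the sampling, approximation, and projection error bounds~\eqref{mcerror}--\eqref{subspacebd} are unchanged. Your subgradient and norm-bound checks are fine preliminary steps, but the missing ingredient is precisely the appeal to smoothness and the corresponding improved optimization rate. Without it you have reproved (a special case of) Theorem~\ref{thm:saarate}, not the corollary.
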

then Then, for all $k \geq 1$,
\begin{equation}
\begin{aligned}
0 &\leq \mathbb{E}\left[ J(F^*_{N,n,k}) - J(F^*)\right]\\
&\quad\leq \frac{c_1}{k} + \frac{c_2}{\sqrt{N}} + c_3h^{\beta} + c_4 g(n),    
\end{aligned}
\end{equation}
with $c_1 = \frac{\mathbb{E}[R^2]}{\rho}$ and the same set of constants $c_2$ to $c_4$ as in Theorem~\ref{thm:saarate}.
\begin{proof}{Proof}
The result follows from the proof of Theorem~\ref{thm:saarate} with additional application of the $O(1/k)$ smooth convex mirror descent bound\citep[Theorem 4.4]{2015bub}.
 \end{proof}

\section{Asymptotically Optimal Allocation}~\label{sec:allocate}
In the previous section, we characterized the optimality gap between the true optimal solution to (OPT) and the solution to the computational objective~\eqref{sampathpb}:
for all $k \geq 1$, 
\begin{equation}\label{saafinbd} 
\begin{aligned}
0 &\leq \mathbb{E}\left[ J(F^*_{N,n,k}) - J(F^*)\right]\\
&\leq \frac{c_1}{\sqrt{k}} + \frac{c_2}{\sqrt{N}} + c_3h^{\beta} + c_4 g(n),    
\end{aligned}
\end{equation}
Given a finite computational budget of size $B$ (this could represent computation time or number of resources that can be translated into time units), there are tradeoff's to be made in allocating this budget across the various computations that need to be performed. We first provide a break down of the computational burden of each of the computations that need to be performed. 

\subsection{Computational Burden}
Observe that the total computational burden of our method consists of three parts: the sampling step performed at the beginning, the gradient computation step at each iteration, and the mirror step at each iteration. 

\begin{itemize}
\item {\bf Stochastic simulation.} 
In general, each sample path $Z_h$ requires the generation of $O\left( \frac{1}{h} \right)$ random variates at approximation level $h > 0$. For example, if an Euler scheme is used to generate $Z_h$ approximating standard Brownian motion $Z$, then $\lceil\frac{T}{h}\rceil$ random variates per sample path are required; if a Levy-type construction with wavelets is used, then $\lceil\frac{T}{h}\rceil$ standard Gaussian random variates are necessary per sample path. Therefore, the total sampling complexity for sampling $N$ paths is $O\left(\frac{N}{h}\right)$. 

\item {\bf Sub-gradient computation.} Observe that the computation of the sub-gradient $\tilde S_{J_{N,h}}$ requires the pathwise derivative for each simulation path. To compute the latter, we need to first regenerate the sample paths $\{Z_{h,i} + F_{N,n,j},\,i=1,\ldots,N\}$ with the updated control $F_{N,n,j}$, which requires $O\left(\frac{N}{h}\right)$ basic operations. We next  compute the pathwise derivative for each of the $N$ paths, which requires $O\left(\frac{N}{h} \right)$ operations, in general. For instance, considering the directional derivative of the Skorokhod regulated path in~Proposition~\ref{prop:Sko-pathwise}, we need to track the stopping time process $\Phi(t)(\cdot)$ for each of the $N$ paths to compute the pathwise gradient, which requires $O\left(\frac{N}{h}\right)$ operations in total. Finally, we repeatedly evaluate the directional derivative along $n$ basis directions to estimate the gradient. Therefore, the  complexity of gradient approximation is of order $O\left(\frac{nN}{h}\right)$. This is further repeated for each iteration of the mirror descent procedure, resulting in a total complexity of $O\left(\frac{knN}{h}\right)$

\item {\bf Mirror descent.} The mirror step has the complexity of a standard Euclidean projection step~\citep{2015bub}, which is of order $O(n)$ at each iteration, and the total complexity is $O(kn)$ over $k$ iterations.
\end{itemize}
Thus, the total computational complexity is $O\left(\frac{N}{h} + \frac{knN}{h} + kn\right)=O\left(\frac{knN}{h}\right)$. 

\subsection{Budget Allocation Problem}
Recall that $B$ denotes the total computational budget. The following optimization problem models the computational tradeoffs and seeks an (asymptotically) optimal budget allocation across the computational steps listed above:
\begin{align*}
&\min_{n, N, k\in \mathbb{Z}^+, h>0} \frac{c_1}{\sqrt{k}} + \frac{c_2}{\sqrt{N}} + c_3h^{\beta} + {c_4 } g(n)\\
&\text{s.t.}\qquad\frac{knN}{h}=B.
\end{align*}
Eliminating the constraint by expressing $h$ as a function of other variables, yields
\begin{align*}
\min_{n, N, k\in \mathbb{Z}^+} \frac{c_1}{\sqrt{k}} + \frac{c_2}{\sqrt{N}} + c_3\left(\frac{nkN}{B}\right)^{\beta} + {c_4} g(n).
\end{align*}
The optimal allocation clearly depends on $g(n)$, which is problem specific. To further the analysis, assume that the function space approximation rate satisfies $g(n)={n^{-\alpha}}$ for some $\alpha>0$. For example, the piecewise linear approximation is of convergence order $o(n)$, equivalent to $\alpha=1$. While the objective is non-convex for arbitrary choices of $\beta >0$, it is necessarily convex for any $\beta\geq 1$ and $\alpha >0$. That is, we are asking for a simulation scheme with a weak convergence order greater than or equal to $1$. As noted before, in the case where $Z$ is Brownian motion, for example, the Euler–Maruyama and Euler-Milstein schemes satisfy $\beta = 1$.  

Now, straightforward calculations, show that the optimal solution is
\(
\log_{B} k^* = \Xi_1 + {\frac{2\alpha\beta}{\alpha+\beta+4\alpha\beta}},~
\log_B N^* = \Xi_2 + {\frac{2\alpha\beta}{\alpha+\beta+4\alpha\beta}},
\log_B n^* = \Xi_3 + {\frac{\beta}{\alpha+\beta+4\alpha\beta}}~\text{and}~   
\log_B h^* = (\Xi_1+\Xi_2+\Xi_3) {-\frac{\alpha}{\alpha+\beta+4\alpha\beta}},
\)
where $\Xi_1$, $\Xi_2$ and $\Xi_3$ are constants that depend on $c_1$, $c_2$, $c_3$, $c_4$, $\alpha$ and $\beta$.
The exact form of the constants is complex, however we can illustrate a specific case by fixing $\beta=1$ and $\alpha=1$, and solve the corresponding optimization problem to obtain
\(
k^* = (2^{-\frac{2}{3}}c_1^{\frac{4}{3}}  c_2^{-\frac{2}{3}} c_3^{-\frac{1}{3}}  c_4^{-\frac{1}{3}} )B^{\frac{1}{3}},~
N^* = (2^{-\frac{2}{3}}c_1^{-\frac{2}{3}}  c_2^{\frac{4}{3}} c_3^{-\frac{1}{3}}  c_4^{-\frac{1}{3}} )B^{\frac{1}{3}},~\text{and}~
n^* = (2^{\frac{2}{3}}c_1^{-\frac{1}{3}}  c_2^{-\frac{1}{3}} c_3^{-\frac{1}{6}}  c_4^{\frac{5}{6}} )B^{\frac{1}{6}}.
\)
Note that the row echelon form of the Hessian matrix at the critical point is an identity matrix, implying that the second-order optimality condition is satisfied. Thus, this critical point is a global minimum. Substituting the solutions, we get
\(
h^* = (2^{-\frac{2}{3}}c_1^{\frac{1}{3}}  c_2^{\frac{1}{3}} c_3^{-\frac{5}{6}}  c_4^{\frac{1}{6}} )B^{-\frac{1}{6}}.
\)

\subsection{Discussion}
The optimal parameter choices shed light on how to design the simulation when implementing our SAA framework. 
{It is instructive to consider the exponent for $k^*~\text{and}~N^*$, 
\(
\gamma := \frac{2\alpha\beta}{\alpha+\beta+4\alpha\beta}.
\)
Dividing by $4\alpha\,\beta$ in the numerator and denominator, we get
\(
\gamma = \frac{1}{2}{\left(\frac{1}{4}\left(\frac{1}{\beta} + \frac{1}{\alpha} \right) + 1\right)^{-1}}.
\)
Recall that the canonical budget allocation rates for mirror descent and SAA in the finite-dimensional setting is $\gamma = 1/2$.
~The denominator in $\gamma$ is the `price' we pay to compute the approximations in the infinite-dimensional setting (with the resulting expressions for $n^*,k^*$), limiting our ability to perform estimation and optimization with a given budget $B$.}

Further illustration is available by considering asymptotic regimes for the function approximation rate. Suppose the feasible domain is a “nice” function space for which subspace convergence to the original space is fast (i.e., $\alpha\rightarrow\infty$). In this limit, we have
\(
\log_B k^* = \Xi_1 + {\frac{2\beta}{1+4\beta}},~
\log_B N^* = \Xi_2 + {\frac{2\beta}{1+4\beta}},~
\log_B n^* = \Xi_3,~\text{and}~
\log_B h^* = \Xi_1+\Xi_2+\Xi_3 {-\frac{1}{1+4\beta}}.
\)
Observe that $n^*$ is constant; thus, more computational effort should be allocated to sampling, gradient estimation, and optimization. In contrast, when $\alpha\to 0$,  we obtain
\(
\log_B k^* = \Xi_1,~
\log_B N^* = \Xi_2,~
\log_B n^* = \Xi_3 + {1},~\text{and}~   
\log_B h^* = \Xi_1+\Xi_2+\Xi_3.
\)
This result indicates that (in the large budget limit) we should allocate almost the entire computational budget to the function space approximation itself, and only allocate a limited/fixed amount to sampling and gradient estimation, and optimization. This highlights a key takeaway from this analysis: the approximation rate parameter $\alpha$ reflects the smoothness of the function space, and the ``rougher'' the function space, the larger the share of the computational budget that needs to be allocated to the function approximation alone, thereby limiting optimization and estimation accuracy. 

\section{Conclusions and Future Directions}~\label{sec:conc}
This paper introduces a drift optimization framework for a broad class of functional stochastic optimization problems driven by regulated stochastic processes. Central to our approach is the sample average approximation (SAA) methodology, which we extended to infinite-dimensional function spaces through a careful integration of pathwise discretization, function-space approximation, and Monte Carlo sampling.

We derive the pathwise directional derivative for the Skorokhod regulator using Danskin’s theorem. This enabled an unbiased gradient estimator suitable for simulation-based optimization schemes such as mirror descent. We establish consistency and convergence rates for our proposed SAA method, providing insight into the optimal budget allocation that minimizes the aggregate error under a large, but finite computational budget. Furthermore, we discuss the asymptotic behavior of the optimal budget allocation, highlighting how the smoothness of the feasible function space informs the prioritization of computational effort.

By delaying discretization until necessary, our methodology respects the functional structure of the problem by leveraging the function form pathwise directional derivative, while remaining practical for implementation. This work paves the way for future research in stochastic optimization with regulated dynamics, particularly in settings that demand scalable, structure-aware simulation methods. Promising directions include extensions to feedback (closed-loop) control, richer classes of regulators and constraints arising in queuing systems, variational inference, and optimal transport.

\section{Acknowledgements}
The authors gratefully acknowledge the following grant support. ZZ is supported by the National Science Foundation (NSF) through grant DMS/22036385. HH is partially supported by NSF through grants DMS/22036385 and CMMI/22014426.

\bibliographystyle{unsrtnat}
\bibliography{arXiv_submittion}

\newpage
\appendix
\section{Drift Optimization as a Control-theoretic Problem}~\label{sec:doss-sussmann}
 Let us recall the Doss-Sussmann transformation (\cite[Ch. V, Th. 25]{Protter},~\cite[Sec. 3.5.1]{Han_2017}]), as applied to a scalar diffusion.

    \begin{theorem}[Doss-Sussmann]
        Suppose the SDE
        \begin{align}\tag{$\star$}
        X_t = x + \int_0^t f(s,X_s,\theta_s) ds + \sigma B_t
        \end{align} 
        has a strong solution, and let $O_t$ denote the stationary, zero mean Ornstein-Uhlenbeck diffusion process satisfying the SDE $dO_t = - O_t dt + \sigma dB_t$. Then, the solution of the SDE $X_t$ is equivalent to the randomized ordinary differential equation (ODE),
        \begin{align*}
            \frac{d Z_t}{dt} = f(t,Z_t+O_t,\theta_t) + O_t,~Z_0 = x.
        \end{align*}
    \end{theorem}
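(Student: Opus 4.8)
```latex
\textbf{Proof proposal for the Doss--Sussmann theorem.}

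The plan is to verify directly that the process defined through the randomized ODE, once shifted by the Ornstein--Uhlenbeck process $O_t$, satisfies the original SDE $(\star)$, so that uniqueness of the strong solution finishes the argument. First I would introduce $Y_t := Z_t + O_t$, where $Z$ solves the pathwise ODE
\[
\frac{dZ_t}{dt} = f(t, Z_t + O_t, \theta_t) + O_t, \qquad Z_0 = x,
\]
and $O$ solves $dO_t = -O_t\,dt + \sigma\,dB_t$ with $O_0 = 0$. Note the ODE is to be read pathwise: for each fixed realization of the Brownian path (hence of $O$), the right-hand side is a genuine function of $t$ and $Z_t$, and under the stated regularity on $f$ (Lipschitz in the state variable, continuity in time) classical ODE theory gives a unique absolutely continuous solution $t \mapsto Z_t(\omega)$ for a.e.\ $\omega$. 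This is the step that legitimizes the ``randomized ODE'' object.

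Next I would compute the dynamics of $Y_t = Z_t + O_t$. Since $Z$ is (pathwise) differentiable and $O$ is an It\^o process, additivity of the differentials gives
\[
dY_t = dZ_t + dO_t = \bigl(f(t, Y_t, \theta_t) + O_t\bigr)\,dt + \bigl(-O_t\,dt + \sigma\,dB_t\bigr) = f(t, Y_t, \theta_t)\,dt + \sigma\,dB_t,
\]
where I used $Z_t + O_t = Y_t$ inside $f$, and the two $O_t\,dt$ terms cancel. In integrated form, with $Y_0 = Z_0 + O_0 = x$,
\[
Y_t = x + \int_0^t f(s, Y_s, \theta_s)\,ds + \sigma B_t,
\]
which is exactly equation $(\star)$. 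Hence $Y$ is a strong solution of $(\star)$, and by the assumed uniqueness of the strong solution, $Y_t = X_t$ almost surely; equivalently $X_t = Z_t + O_t$, which is the asserted equivalence between $(\star)$ and the randomized ODE.

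I expect the main obstacle to be the rigorous justification of the pathwise ODE step rather than the (elementary) cancellation computation: one must argue that $f(t, Z_t(\omega) + O_t(\omega), \theta_t)$ is, for a.e.\ $\omega$, a sufficiently regular function of $(t, Z_t)$ that the Carath\'eodory/Picard--Lindel\"of existence and uniqueness theory applies with a solution that is absolutely continuous in $t$ (so that ``$dZ_t/dt$'' makes sense and the fundamental theorem of calculus can be invoked when integrating). A secondary technical point is measurability: one should check that the pathwise solution map $\omega \mapsto Z_\cdot(\omega)$ is adapted, so that $Y = Z + O$ is itself an adapted process and therefore a legitimate candidate for the strong solution to which uniqueness is applied. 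Both points are standard under the hypothesis that $(\star)$ has a strong solution together with mild Lipschitz/growth conditions on $f$, so I would state them as the operative regularity assumptions and cite the Doss--Sussmann references (\cite[Ch.\ V, Th.\ 25]{Protter}, \cite[Sec.\ 3.5.1]{Han_2017}) for the detailed verification.
```
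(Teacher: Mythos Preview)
The paper does not supply its own proof of this statement: the Doss--Sussmann theorem is simply quoted in the appendix with citations to \cite[Ch.\ V, Th.\ 25]{Protter} and \cite[Sec.\ 3.5.1]{Han_2017}, and then applied. So there is no in-paper argument to compare against.

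Your direct verification is the standard one and is correct: set $Y_t = Z_t + O_t$, differentiate, watch the $\pm O_t\,dt$ terms cancel, and invoke strong uniqueness. The technical caveats you flag (pathwise well-posedness of the randomized ODE via Carath\'eodory/Picard--Lindel\"of, and adaptedness of $\omega\mapsto Z_\cdot(\omega)$) are exactly the right ones, and deferring their details to the cited references is appropriate here.

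One small inconsistency worth noting: you take $O_0=0$, which is what the computation requires so that $Y_0 = Z_0 + O_0 = x$ matches the initial condition of $(\star)$. However, the theorem as stated in the paper calls $O$ the \emph{stationary} Ornstein--Uhlenbeck process, for which $O_0$ is a nondegenerate Gaussian, not zero. This appears to be an imprecision in the paper's statement rather than a flaw in your argument; the cancellation mechanism forces $O_0=0$ (equivalently, one works with the OU process started at the origin, or adjusts the ODE's initial condition to $Z_0 = x - O_0$). You may want to flag this explicitly.
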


    Since we have assumed that the diffusion process has a strong solution, it follows that $X$ has a pathwise unique solution. Consequently, every sample path of $(\star)$ is represented by a solution  $Z$ of the randomized ODE, for each $\omega \in \Omega$. It follows that OPT can be equivalently expressed in a control problem formulation as
    \begin{align*}
        &\text{minimize}~\mathbb E_{\mathbb P} \left[ \tilde J\circ\Gamma(Z) \right]\\
        &\text{subject to:}\\
        &\frac{d Z_t(\omega)}{dt} = f(t,Z_t(\omega)+O_t(\omega),\theta_t) + O_t(\omega),\\
        &Z_0 = x,\\
        &dO_t(\omega) = - O_t(\omega) dt + \sigma dB_t(\omega).
    \end{align*}

 \section{Proofs from Section~\ref{sec:dir-der}}
 \subsection{Proof of Proposition~\ref{prop:Sko-pathwise}}
 \begin{proof}{Proof}
We first observe that $$\Gamma(y)(t) = y(t) + \sup_{0\leq s\leq t}\{-y(s)\}_+ = \sup_{0\leq s\leq t}\max\{y(t)-y(s), y(t)\}.$$
In the view of Lemma~\ref{lem:5.0}, consider $\max\{y(t)-y(s), y(t)\}$ as $M(u,v)$ where $C$ is $U$ in Lemma~\ref{lem:5.0} and $[0, t]$ is $V$ in Lemma~\ref{lem:5.0}. $[0, t]$ is indeed compact.  $\sup_{0\leq s\leq t}\max\{y(t)-y(s), y(t)\}$ is $\Bar{M}(u)$. $\max\{y(t)+\delta u(t)-y(s) -\delta u(s), y(t)+\delta u(t)\}$ is continuous in s, implying upper semi-continuity. Fixing s, each term inside the maximum function is continuous in $\delta$, implying that the maximum function is also continuous in $\delta$ and hence upper semi-continuous. To verify the third assumption in Lemma~\ref{lem:5.0}, we compute the directional derivative of $\max\{y(t)+\delta u(t)-y(s)-\delta u(s), y(t)+\delta u(t)\}$ along direction $u$, which is given by

\begin{align*}
    &D_u \Bigg\{ y(t) + 
        \max\Big\{\delta u(t) - y(s) - \delta u(s),\delta u(t) \Big\} 
    \Bigg\} \\
    &= \lim_{\epsilon \to 0^+} \frac{1}{\epsilon} \Bigg( 
        y(t) + (\delta + \epsilon) \max\Big\{ (u(t) - u(s)), u(t) \Big\} \\
    &\qquad\quad - \max\Big\{ y(t)- y(s) + \delta (u(t)  - u(s)), \\
    &\qquad \qquad\quad y(t) + \delta u(t) \Big\} 
    \Bigg).
\end{align*}
Observe the two maximum functions is the directional derivative expression, consider four cases:
\begin{enumerate}
    \item 
    \begin{align*}
        &y(t)+(\delta+\epsilon)u(t)-y(s)-(\delta+\epsilon)u(s)\\&\geq y(t)+(\delta+\epsilon)u(t),\\
        &y(t)+\delta u(t)-y(s)-\delta u(s)\geq y(t)+\delta u(t).
    \end{align*}
    \item 
    \begin{align*}
        &y(t)+(\delta+\epsilon)u(t)-y(s)-(\delta+\epsilon)u(s)\\&\geq y(t)+(\delta+\epsilon)u(t),\\
        &y(t)+\delta u(t)-y(s)-\delta u(s)< y(t)+\delta u(t).
    \end{align*}\item 
    \begin{align*}
        &y(t)+(\delta+\epsilon)u(t)-y(s)-(\delta+\epsilon)u(s)\\&< y(t)+(\delta+\epsilon)u(t),\\
        &y(t)+\delta u(t)-y(s)-\delta u(s)\leq y(t)+\delta u(t).
    \end{align*}\item 
    \begin{align*}
        &y(t)+(\delta+\epsilon)u(t)-y(s)-(\delta+\epsilon)u(s)\\&< y(t)+(\delta+\epsilon)u(t),\\
        &y(t)+\delta u(t)-y(s)-\delta u(s)> y(t)+\delta u(t).
    \end{align*}
\end{enumerate}
In the first two cases, $D_u \max\{y(t)+\delta u(t)-y(s)-\delta u(s), y(t)+\delta u(t)\}=u(t)-u(s)$. In the other two cases, $D_u \max\{y(t)+\delta u(t)-y(s)-\delta u(s), y(t)+\delta u(t)\}=u(t)$. In all four cases, $D_u \max\{y(t)+\delta u(t)-y(s)-\delta u(s), y(t)+\delta u(t)\}$ is continuous in $s$.
Now we have verified all the assumptions in Lemma~\ref{lem:5.0}.
To get the final expression for $D_u\Gamma(y)(t)$, consider
\begin{align*}
    &D_u \max\{y(t)-y(s), y(t)\}\\
    &=\lim_{\epsilon\to 0^+}\frac{1}{\epsilon}\Bigg(\max\Big\{y(t)+\epsilon u(t)-y(s)-\epsilon u(s),\\& y(t)+\epsilon u(t)\Big\}- \max\Big\{y(t)-y(s), y(t)\Big\}\Bigg).
\end{align*}
Observe that when $y(s)<0$,  we can choose $\epsilon$ sufficiently small such that $\epsilon|u(s)|< -y(s)$, ensuring that the first argument inside the max function is dominant. In this case, $D_u \max\{y(t)-y(s), y(t)\}=u(t)-u(s)$. On the other hand, when $y(s)>0$, we can choose $\epsilon$ sufficiently small such that $\epsilon|u(s)|< y(s)$, ensuring that the second argument inside the max function is dominant. Thus, $D_u \max\{y(t)-y(s), y(t)\}=u(t)$. When $y(s)=0$, if $u(s)\geq 0$, we have $D_u \max\{y(t)-y(s), y(t)\}=u(t)$. If $u(s)<0$, then $D_u \max\{y(t)-y(s), y(t)\}=u(t)-u(s)$. 

By Lemma~\ref{lem:5.0},
\begin{align*}
    D_u\Gamma(y)(t) &= \sup_{s\in \Phi_t(y)} D_u \max\{y(t)-y(s), y(t)\}.
\end{align*}
Recall that 
\begin{align*}
    \Phi_t(y) :&= \{0\leq s \leq t : \Gamma(y)(t) = y(t)-y(s)\} \\
    &= \{0\leq s \leq t : \Psi(y)(t) = -y(s)\}\\
    &= \{0\leq s \leq t : \sup_{0\leq r \leq t} \{-y(r)\}_+ = -y(s)\}.
\end{align*}
The last expression implies that, for any $s\in \Phi_t(y)$, we must have $y(s)\leq 0$. Hence, any s where $y(s)>0$ will never be included in $\Phi_t(y)$.
For sample path $y(\cdot)$ defined on $[0, T]$ that starts from 0, if the path ever revisits 0, it will drop below 0 with probability 1. Thus, in such case $\Phi_t(y)$ contains only time points such that $y(s)<0$ and
\begin{align*}
    D_u\Gamma(y)(t) &= \sup_{s\in \Phi_t(y)} D_u \max\{y(t)-y(s), y(t)\}\\
    &= \sup_{s\in \Phi_t(y)} \{u(t) - u(s)\}\\
    &= u(t) + \sup_{s\in \Phi_t(y)} \{-u(s)\}.
\end{align*}
If the path remains strictly positive after the starting point, then $\Phi_t(y)=\{0\}$. In this case, when $u(0)\geq 0$, we obtain
\begin{align*}
    D_u\Gamma(y)(t) &= \sup_{s\in \Phi_t(y)} D_u \max\{y(t)-y(s), y(t)\}\\
    &= \sup_{s\in \Phi_t(y)} \{u(t)\}\\
    &= u(t).
\end{align*}
When $u(0)< 0$,
\begin{align*}
    D_u\Gamma(y)(t) &= \sup_{s\in \Phi_t(y)} D_u \max\{y(t)-y(s), y(t)\}\\
    &= u(t) + \sup_{s\in \Phi_t(y)} \{-u(s)\}.
\end{align*}
For sample path $y(t):0\leq t\leq T$ that starts below 0, $\Phi_t(y)=\{0\}$ is guaranteed to only contain $s$ such that $y(s)<0$, therefore,
\begin{align*}
    D_u\Gamma(y)(t) &= \sup_{s\in \Phi_t(y)} D_u \max\{y(t)-y(s), y(t)\}\\
    &= u(t) + \sup_{s\in \Phi_t(y)} \{-u(s)\}.
\end{align*}
For sample path $y(t):0\leq t\leq T$ that starts above 0, again it is a $0$ probability event that the path hits 0 but never becomes negative. For those paths that start above 0 and stay positive over $t: 0\leq t\leq T$, we have
\begin{align*}
    D_u\Gamma(y)(t) &= \sup_{s\in \Phi_t(y)} D_u \max\{y(t)-y(s), y(t)\}\\
    &= u(t).
\end{align*}
For paths that start above 0 and ever go below 0, we have
\begin{align*}
    D_u\Gamma(y)(t) &= \sup_{s\in \Phi_t(y)} D_u \max\{y(t)-y(s), y(t)\}\\
    &= u(t) + \sup_{s\in \Phi_t(y)} \{-u(s)\}.
\end{align*}
Combining all the six cases above, the result follows.

 \end{proof}
\subsection{Proof of Lemma~\ref{lem:5.2}}
\begin{proof}{Proof}
Consider the case where $y(s):0< s\leq t$ attains negative values at least once. Let $T^*(t)$ denote the first time at which $\sup_{s\in\Phi_t(y)}$ is achieved. Then, we compute
\begin{align*}
    &\Psi(y+u)(t)-\Psi(y)(t)\\
    &=\sup_{0\leq s\leq t}\{-y(s)-u(s)\}_+-\sup_{0\leq s\leq t}\{-y(s)\}_+\\
    &= \left(-Z(T^*(t))-u(T^*(t))\right)_+-\sup_{0\leq s\leq t}\{-y(s)\}_+\\
    &\geq (-u(s)-y(s))-\sup_{0\leq s\leq t}\{-y(s)\}_+.
\end{align*}
For each $s \in \Phi_t(y)$, we have $(-u(s)-y(s))-\sup_{0\leq s\leq t}\{-y(s)\}_+ = -u(s)$, implying that
\begin{align}\label{eq:fr-lb}
	\Psi(y+u)(t)-\Psi(y)(t) \geq \sup_{s \in \Phi_t(y)}\{-u(s)\}.
\end{align}
On the other hand, 
\begin{align*}
    &\Psi(y+u)(t)-\Psi(y)(t)\\
    &=\sup_{0\leq s\leq t}\{-y(s)-u(s)\}_+-\sup_{0\leq s\leq t}\{-y(s)\}_+\\
    &\leq \sup_{0\leq s\leq t}\{(-y(s)-u(s))_+-(-y(s))_+\},
\end{align*}
where the inequality follows from the fact that for any $\mathbb R^d$-valued functions $f,g$, $$\sup_{0\leq s \leq t}\{f(s)\} \leq \sup_{0 \leq s \leq t}\{f(s)-g(s)\} + \sup_{0 \leq s \leq t}\{g(s)\}.$$
Now, since
\(
   (-y(s)-u(s))_+
   =\max\{0,-y(s)-u(s)\}
   \leq\max\{0,-y(s)\}+\max\{0,-u(s)\},
\)
by the subadditivity property of the maximum function, it follows that 
\begin{align}
    \nonumber &\Psi(y+u)(t)-\Psi(y)(t)-\sup_{s\in\Phi_t(y)}\{-u(s)\}\\
    \nonumber &\leq \sup_{0\leq s\leq t}\{(-y(s)-u(s))_+-(-y(s))_+\}\\
    \nonumber &\qquad-\sup_{s\in\Phi_t(y)}\{-u(s)\}\\
    &\leq \sup_{0\leq s\leq t}\left\{-u(s)\right\}_+-\sup_{s\in\Phi_t(y)}\left\{-u(s)\right\}
    \label{eq:fr-ub}
    \leq 2||u||
\end{align}
Combining~\eqref{eq:fr-lb} and~\eqref{eq:fr-ub} we obtain
\begin{align*}
    -2||u||&\leq 0\\
    &\leq\Psi(y+u)(t)-\Psi(y)(t)\\
    &-\sup_{s\in\Phi_t(y)}\{-u(s)\} \leq 2||u||, 
\end{align*}
it follows that
\begin{align*}
    &|\Psi(y+u)(t)-\Psi(y)(t)-\sup_{s\in\Phi_t(y)}\{-u(s)\}|\\&
    =o( ||u||).
\end{align*}
Let us now consider when $y(0)=0$, $y(s)>0\,\forall \,s\in(0,t]$. Then, we compute
\begin{align*}
    &\Psi(y+u)(t)-\Psi(y)(t)\\
    &=\sup_{0\leq s\leq t}\{-y(s)-u(s)\}_+-\sup_{0\leq s\leq t}\{-y(s)\}_+\\
    &= \sup_{0\leq s\leq t}\{-y(s)-u(s)\}_+.
\end{align*}
This follows since the second supremum term is zero, given that $y(s)>0\,\forall \,s\in(0,t]$.
Since $y(s): 0<s\leq t$ is strictly positive,
we have $\sup_{0< s\leq t}\{-y(s)-u(s)\}_+ =0$ for sufficiently small $\|u\|$. On the other hand, if $u(0)<0$, $\{-y(0)-u(0)\}_+=\{-u(0)\}_+$ could attain a positive value. Thus, using the same argument in the last case, we have 
$|\Psi(y+u)(t)-\Psi(y)(t)-\sup_{s\in\Phi_t(y)}\{-u(s)\}|=o(||u||)$
for  $y(0)=0$, $y(s)>0\,\forall \,s\in(0,t]$ and $u(0)<0$.
In all the remaining cases, $\Psi(y+u)(t) = \Psi(y)(t) = 0$ for sufficiently small $\|u\|$ and we have
$|\Psi(y+u)(t)-\Psi(y)(t)|=o(||u||).$  
\end{proof}
\section{Proofs from Section~\ref{sec:equi}}\label{sec:equi-proofs}
\subsection{Proof of Lemma~\ref{lem:sub-gauss}}
\begin{proof}{Proof}
Fix $F,G \in \mathcal{F}$ such that $F \neq G$. By H\"older's inequality we have
\(
    |\mathcal{Y}_{F}(\mathbf Z) - \mathcal{Y}_G(\mathbf Z)| \leq \frac{1}{\sqrt{N}} \|g\|_{q} \|\mathcal{G}_F(\mathbf Z) - \mathcal{G}_G (\mathbf Z)\|_{p},
\)
where $\frac{1}{p} + \frac{1}{q} = 1$ and $q \geq 2$. Next, following Assumption~\ref{ass:LipschitzinF}, we have
\begin{align*}
    &\|\mathcal{G}_F - \mathcal{G}_G\|_{p}\\ 
    &\qquad= \left( \sum_{i=1}^N \left(\tilde{J}(Z_i+F) - \tilde{J}(Z_i+G)\right)^p\right)^{1/p}
    \\&\qquad\leq \left( \sum_{i=1}^N \left| K_{Z_i} \right|^p \|F-G\|_{\infty}^p\right)^{1/p}
    \\&\qquad= \|F - G\|_{\infty} \| K_{\mathbf{Z}} \|_{p}.
\end{align*}
It follows that
\begin{align}
\label{eq:cond-bound}
&\mathbb P\left(|\mathcal{Y}_F(\mathbf Z) - \mathcal{Y}_G(\mathbf Z)| > u \bigg| \mathbf{Z} =  \boldsymbol\xi  \right) \\ \nonumber &\qquad\leq \mathbb P\left( \|g\|_{q} > \frac{u \sqrt{N}}{\|K_{\mathbf{Z}}\|_{p} \|F-G\|_{\infty}} \bigg| \mathbf{Z}= \boldsymbol\xi \right).
\end{align}
It is straightfoward to see that $x \mapsto \|x\|_{q}$ is a Lipschitz function from $\mathbb R^N$ to $\mathbb R$. Then, by \citep[Theorem 5.6]{Boucheron}, $\|g\|_{q}$ satisfies the sub-Gaussian concentration inequality
\(
    \mathbb P\left( \|g\|_{q} > \epsilon\right) \leq 2 \exp\left(-\frac{\epsilon^2}{2 L^2} \right),
\)
 where $L$ is defined above. Applying this to~\eqref{eq:cond-bound} completes the proof.
 \end{proof}

\subsection{Proof of Proposition~\ref{thm:gauss-complex}}
\begin{proof}{Proof}
It is straightforward to see that $\{\mathcal{Y}_F(\mathbf Z) : F \in \mathcal F\}$ is a separable random field. Further, by Lemma~\ref{lem:sub-gauss} $\{\mathcal{Y}_F(\mathbf Z) : F \in \mathcal F\}$ is sub-Gaussian. By Lemma~\ref{ass:diameter}, and the definition of the pseudometric $d$, we have $ D := \sup_{\xi_1,\xi_2 \in \mathcal{B}} d(\xi_1,\xi_2) < +\infty$. By Dudley's Theorem for separable random fields~\citep[Ch. 11]{Ledoux_1991} it follows that there exists a constant $0 < \mathtt{C}' < + \infty$ such that
\(
    \mathbb E_{g} \left[ \sup_{F \in \mathcal F} |\mathcal{Y}_F(\mathbf Z) - \mathcal{Y}_{F_0}(\mathbf Z)| \bigg | \mathbf Z =  \boldsymbol\xi  \right] \leq \mathtt{C}' \int_0^{D/2} \sqrt{ \log N(\epsilon,\mathcal{B},d)} d\epsilon.
\)
By Lemma~\ref{lem:e-cover} in the Appendix (see Section~\ref{sec:coverproof}), it follows that $N(\epsilon,\mathcal{B},d) = N(\epsilon',\mathcal{F},\|\cdot\|_{\infty})$, where $\epsilon' = \epsilon \frac{\sqrt{N}}{\|K_{ \boldsymbol\xi }\|_{p}}$. Then, changing variables in the metric entropy integral above to $\epsilon'$, we have $D' = D \sqrt{N}/\|K_{\mathbf{Z}}\|_{p} = diam(\mathcal F)$ and
\begin{align*}
    &\int_0^{D/2} \sqrt{ \log N(\epsilon,\mathcal{B},d)} \,d\epsilon\\&\qquad= \frac{\|K_{ \boldsymbol\xi }\|_{p}}{\sqrt{N}} \int_0^{D'/2} \sqrt{\log N(\epsilon,\mathcal{F},\|\cdot\|_{\infty})}\, d \epsilon\\
     & \qquad \leq
    \frac{\|K_{ \boldsymbol\xi }\|_{p}}{\sqrt{N}} \frac{\alpha}{\alpha-1} \left(\frac{1}{2}diam(\mathcal F)\right)^{\frac{\alpha-1}{\alpha}}.
\end{align*}
Note that by Lemma~\ref{ass:diameter} it follows that the right hand side above is finite. Setting $\mathtt{C} = \mathtt{C}' \frac{\alpha}{\alpha - 1}$ completes the proof.
 
\end{proof}

\subsection{Proof of Proposition~\ref{thm:equiconv}}
 Consider the following generalization of McDiarmid's inequality.
\begin{theorem}[Theorem 1~\citep{kontorovich2014}]~\label{thm:kont}
Let $(\mathcal{X},d,\pi)$ be a metric space that satisfies $\dsg(\mathcal{X}) <+\infty$, and $\varphi : \mathcal{X}^N \to \mathbb R$ is $1$-Lipschitz, then $E_\pi[\varphi(\mathbf Z)] < +\infty$, and
\(
    \mathbb \pi\left( |\varphi(\mathbf Z) - \mathbb E_\pi[\varphi(\mathbf Z)]| > t \right) \leq 2 \exp\left( -\frac{t^2}{2N \dsg^2(\mathcal X)} \right),
\)
where $\mathbf Z = (Z_1,\cdots,Z_N)$ is an independent sample drawn from $\pi$.
\end{theorem}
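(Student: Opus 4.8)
The plan is to prove this as a sub-Gaussian sharpening of McDiarmid's bounded-differences inequality, via the Doob-martingale (Azuma) method in which the usual bounded increments are replaced by increments that are \emph{sub-Gaussian} with parameter controlled by $\dsg(\mathcal{X})$. Throughout I take the product metric on $\mathcal{X}^N$ to be additive, $d(\mathbf{x},\mathbf{y}) = \sum_{i=1}^N d(x_i,y_i)$, so that $1$-Lipschitzness of $\varphi$ on $\mathcal{X}^N$ delivers coordinatewise $1$-Lipschitzness, the property I actually use.

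First I would set up the Doob martingale. Define $V_i := \mathbb{E}[\varphi(\mathbf{Z}) \mid Z_1,\ldots,Z_i]$, so that $V_0 = \mathbb{E}[\varphi(\mathbf{Z})]$, $V_N = \varphi(\mathbf{Z})$, and let $D_i := V_i - V_{i-1}$. Conditioning on the past $Z_1,\ldots,Z_{i-1}$ and integrating out the future coordinates, write $g(z) := \mathbb{E}[\varphi(Z_1,\ldots,Z_{i-1},z,Z_{i+1},\ldots,Z_N)\mid Z_1,\ldots,Z_{i-1}]$; coordinatewise Lipschitzness of $\varphi$ makes $g$ a $1$-Lipschitz function of $z$, and $D_i = g(Z_i) - \mathbb{E}[g(Z_i)\mid Z_1,\ldots,Z_{i-1}]$, so each increment is a centered image of a single fresh sample under a $1$-Lipschitz map.

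The heart of the argument is a conditional sub-Gaussian bound: for any $1$-Lipschitz $g:\mathcal{X}\to\mathbb{R}$ and $Z\sim\pi$,
\[
\mathbb{E}\!\left[e^{\lambda(g(Z)-\mathbb{E}g(Z))}\right] \leq e^{\lambda^2\dsg^2(\mathcal{X})/2},\qquad \lambda\in\mathbb{R}.
\]
I would prove this by symmetrization. Introducing an independent copy $Z'\sim\pi$ and using Jensen to replace $\mathbb{E}g(Z)$ by $g(Z')$ in the exponent gives $\mathbb{E}[e^{\lambda(g(Z)-\mathbb{E}g(Z))}] \leq \mathbb{E}[e^{\lambda(g(Z)-g(Z'))}]$. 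Since $g(Z)-g(Z')$ is symmetric, I insert a Rademacher sign $\epsilon$ without changing its law, then condition on $(Z,Z')$: with $a:=g(Z)-g(Z')$ and $b:=d(Z,Z')$, Lipschitzness yields $|a|\leq b$, so evenness and monotonicity of $\cosh$ give $\mathbb{E}_\epsilon[e^{\lambda\epsilon a}] = \cosh(\lambda a) \leq \cosh(\lambda b) = \mathbb{E}_\epsilon[e^{\lambda\epsilon b}]$. Taking the outer expectation identifies the right side with $\mathbb{E}[e^{\lambda\mathcal{H}}]$ for the symmetrized distance $\mathcal{H}=\epsilon d(Z,Z')$, which is at most $e^{\lambda^2\dsg^2(\mathcal{X})/2}$ by the very definition of the sub-Gaussian diameter. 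Applied conditionally on $Z_1,\ldots,Z_{i-1}$, this shows $\mathbb{E}[e^{\lambda D_i}\mid Z_1,\ldots,Z_{i-1}] \leq e^{\lambda^2\dsg^2(\mathcal{X})/2}$.

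To finish I would tensorize over the increments using the tower property, peeling off $D_N$, then $D_{N-1}$, and so on, to obtain $\mathbb{E}[e^{\lambda(\varphi-\mathbb{E}\varphi)}] \leq e^{N\lambda^2\dsg^2(\mathcal{X})/2}$; a Chernoff bound optimized at $\lambda = t/(N\dsg^2(\mathcal{X}))$ yields the one-sided tail $e^{-t^2/(2N\dsg^2(\mathcal{X}))}$, and applying the same to $-\varphi$ gives the stated two-sided bound with the factor $2$. The integrability claim $\mathbb{E}[\varphi(\mathbf{Z})]<\infty$ follows because finite $\dsg(\mathcal{X})$ forces $\mathbb{E}[d(Z,Z')]<\infty$, hence (by Fubini) $\mathbb{E}[d(Z,z_0)]<\infty$ for some reference $z_0$, and Lipschitzness bounds $|\varphi|$ by an affine function of $\sum_i d(Z_i,z_0)$. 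The main obstacle is the symmetrization step: the entire argument hinges on the clean $\cosh$-monotonicity comparison that converts the Lipschitz increment $g(Z)-g(Z')$ into the symmetrized distance $\mathcal{H}$, which is precisely the quantity the sub-Gaussian diameter was engineered to control.
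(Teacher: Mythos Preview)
The paper does not supply a proof of this statement: it is quoted verbatim as Theorem~1 of \cite{kontorovich2014} and invoked as a black box in the proof of Proposition~\ref{thm:equiconv}. There is therefore no in-paper proof to compare your proposal against.

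That said, your sketch is correct and is essentially the argument in Kontorovich's original paper: a Doob martingale decomposition whose increments are shown, via symmetrization and the $\cosh$-monotonicity comparison, to be conditionally sub-Gaussian with parameter $\dsg(\mathcal{X})$, followed by the standard tower-property tensorization and a Chernoff bound. The integrability of $\mathbb{E}[\varphi(\mathbf{Z})]$ is handled exactly as you indicate.
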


Observe that this result significantly loosens the requirements in McDiarmid's inequality from boundedness to Lipschitz continuity. 

\sloppy
\begin{proof}{Proof}
We start by considering the functional $ \varphi : C^N \to \mathbb R$ defined as 
\(
 \varphi( \boldsymbol\xi ) := \sup_{F \in \mathcal{F}} \left \{J(F) - \frac{1}{N} \sum_{i=1}^N \tilde J(\xi_i+F) \right\},   
\)
for any $ \boldsymbol\xi  \in C^N$. Let $ \boldsymbol\xi =(\xi_1,\cdots, \xi_N) \in C^N$, $ \boldsymbol\xi '=(\xi_1',\cdots, \xi_N') \in C^N$; the metric distance between these vectors of functions is given by $ \| \boldsymbol\xi - \boldsymbol\xi '\|=\sum_{i=1}^N \|\xi_i-\xi_i'\|_{\infty}$. Also, define the tuples $ \boldsymbol\xi ^1 = (\xi_1',\xi_2,\cdots,\xi_N)$, $ \boldsymbol\xi ^2 = (\xi_1',\xi_2', \xi_3,\cdots,\xi_N),~\ldots,~  \boldsymbol\xi ^N = (\xi_1',\xi_2',\ldots,\xi_N') \equiv  \boldsymbol\xi '$. Using the triangle inequality, it is straightforward to see that
\begin{align}
    |\varphi( \boldsymbol\xi )-\varphi( \boldsymbol\xi ')|
    \label{eq:varphi-metric}
    &\leq |\varphi( \boldsymbol\xi )-\varphi( \boldsymbol\xi ^1)|+|\varphi( \boldsymbol\xi ^1)-\varphi( \boldsymbol\xi ^2)|\notag\\
    &\qquad+\cdots+|\varphi( \boldsymbol\xi ^{N-1})-\varphi( \boldsymbol\xi ')|,
\end{align}
where each pair of $ \boldsymbol\xi ^{k-1}$ and $ \boldsymbol\xi ^k$ differs only by the $k$th element. Let $ \boldsymbol\xi ^k(i)$ represent the $i^{\text{th}}$ element of the $k^{\text{th}}$ tuple and $F^* \in \mathcal{F}$ be the function that achieves the supremum in $\varphi( \boldsymbol\xi )$. For any such pair of vectors, we have 
\begin{align*}
&\left|\varphi( \boldsymbol\xi ^{k-1})-\varphi( \boldsymbol\xi ^{k})\right|\\
    =&\bigg| \sup_{F \in \mathcal{F}} \left \{J(F) - \frac{1}{N} \sum_{i=1}^N \tilde J( \boldsymbol\xi ^{k-1}(i) +F) \right\}\\ 
    &\qquad- \sup_{F \in \mathcal{F}} \left \{ \left(J(F) - \frac{1}{N} \sum_{i=1}^N \tilde J( \boldsymbol\xi ^{k-1}(i) +F) \right) \right. \\
    &\qquad\quad \left.+ \frac{1}{N} \left( \tilde J(\xi_k+F) - \tilde{J}(\xi_k'+F) \right)  \right\}\bigg|\\
     \leq&\,\frac{1}{N}\left|\left( \tilde J(\xi_k+F^*)-\tilde J(\xi_k'+F^*) \right)\right| \leq \frac{\kappa}{N}\|\xi_k-\xi_k'\|_{\infty},
\end{align*}
where the last inequality follows from Assumption~\ref{ass:LipschitzinZ}. Consequently, substituting this into~\eqref{eq:varphi-metric} we have
\begin{equation}
\begin{aligned}  
|\varphi( \boldsymbol\xi )-\varphi( \boldsymbol\xi ')|
\label{eq:varphi-lip}
&\leq\frac{\kappa}{N}\Big(\|\xi_1-\xi_1'\|_{\infty}+\|\xi_2-\xi_2'\|_{\infty}\\
&\qquad+\cdots+\|\xi_N-\xi_N'\|_{\infty}\Big)\\
&=\frac{\kappa}{N} \| \boldsymbol\xi - \boldsymbol\xi '\|.
\end{aligned}     
\end{equation}
In other words, the functional $\varphi$ is $\frac{\kappa}{N}$-Lipschitz continuous. 
\sloppy Now, by hypothesis we have $\dsg^2(C)<+\infty$, and therefore applying Theorem~\ref{thm:kont} we have
\(
    \mathbb P\left(\varphi-\mathbb E(\varphi)>t\right) = \mathbb P\left(\frac{N}{\kappa}\left(\varphi-\mathbb E(\varphi) \right)>\frac{N}{\kappa}t\right)
    \leq \exp\left(-\frac{Nt^2}{2\kappa^2\dsg^2(C)} \right).
\)
Now, for any $\delta>0$, $\exp\left(-\frac{Nt^2}{2\kappa^2\dsg^2(C)}\right)\leq \delta$ implies that $t\geq \left(\frac{2\kappa^2\dsg^2(C)\log(1/\delta)}{N}\right)^{1/2}$. Hence, with probability at least $1-\delta$, we have
\(
     \varphi<\mathbb E(\varphi)+ \left(\frac{2\kappa^2\dsg^2(C)\log(1/\delta)}{N}\right)^{1/2},
\)
~which yields the final expression in~\eqref{eq:sample-complex1}.
 \end{proof}

\section{Translating a Cover Under the Pseudometric to a Supremum‐Norm Cover}\label{sec:coverproof}
In the proof of Proposition~\ref{thm:gauss-complex}, we used the following result that an $\epsilon$-cover under the pseudometric can be ``translated'' into a corresponding $\epsilon$-cover under the supremum-norm.
\begin{lemma}~\label{lem:e-cover}
Fix $\epsilon > 0$. Let $ \boldsymbol\xi  = (\xi_1,\cdots,\xi_N)\in C^N$ and suppose that $B_1, \cdots, B_l \subset \mathbb R^N$ is an $\epsilon$-cover of 
$\mathcal{B} = \{\mathcal{G}_F( \boldsymbol\xi ) : F \in \mathcal{F}\}$ under the pseudometric~\eqref{eq:pmetric}. Then, there exist subsets $B_1',\cdots, B_l'$ that form an $\epsilon'$-cover of $\mathcal{F}$ under the supremum norm $\|\cdot\|_\infty$, with $\epsilon' = \frac{\epsilon \sqrt N}{ \|K_{ \boldsymbol\xi }\|_{p}}$.
\end{lemma}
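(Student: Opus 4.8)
The plan is to construct the cover $\{B_i'\}$ by pulling the given cover back through the evaluation map $F \mapsto \mathcal{G}_F(\boldsymbol\xi)$, and then to read off the resulting radius directly from the definition of the pseudometric $d$ in~\eqref{eq:pmetric}. Concretely, for $i = 1,\ldots,l$ I would set $B_i' := \{F \in \mathcal{F} : \mathcal{G}_F(\boldsymbol\xi) \in B_i\}$. Since $\mathcal{G}_F(\boldsymbol\xi) \in \mathcal{B}$ for every $F \in \mathcal{F}$ and the $B_i$ cover $\mathcal{B}$, the sets $B_1',\ldots,B_l'$ cover $\mathcal{F}$ and there are $l$ of them, so the only thing left to check is the radius estimate.

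For the radius, the key observation is that, by construction, $d$ is just the sup-norm on the index set rescaled by $c := \|K_{\boldsymbol\xi}\|_{p}/\sqrt{N}$: for all $F, F' \in \mathcal{F}$ one has $d\big(\mathcal{G}_F(\boldsymbol\xi), \mathcal{G}_{F'}(\boldsymbol\xi)\big) = c\,\|F - F'\|_\infty$, taking $F$ itself as the representative of $\mathcal{G}_F(\boldsymbol\xi)$. Hence, if $F, F' \in B_i'$, then $\mathcal{G}_F(\boldsymbol\xi), \mathcal{G}_{F'}(\boldsymbol\xi)$ both lie in $B_i$, which has $d$-diameter at most $\epsilon$ (up to a harmless constant depending on the convention used for ``$\epsilon$-cover''), so $\|F - F'\|_\infty = c^{-1} d\big(\mathcal{G}_F(\boldsymbol\xi), \mathcal{G}_{F'}(\boldsymbol\xi)\big) \le \epsilon/c = \epsilon\sqrt{N}/\|K_{\boldsymbol\xi}\|_{p} = \epsilon'$. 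Thus each $B_i'$ has $\|\cdot\|_\infty$-diameter at most $\epsilon'$, i.e.\ $\{B_i'\}$ is an $\epsilon'$-cover of $\mathcal{F}$, which is exactly the assertion; in particular $N(\epsilon', \mathcal{F}, \|\cdot\|_\infty) \le N(\epsilon, \mathcal{B}, d)$, and pushing a sup-norm cover of $\mathcal{F}$ forward through $\mathcal{G}_\cdot(\boldsymbol\xi)$ yields the matching inequality, hence the equality of metric entropies invoked in the proof of Proposition~\ref{thm:gauss-complex}.

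The one point that needs care — and is really the only obstacle — is the well-definedness of the phrase ``$F_a$ corresponds to $a$ through the map $\mathcal{G}_\cdot$'' when $\mathcal{G}_\cdot(\boldsymbol\xi)$ fails to be injective. This is harmless provided $d$ is understood as the pseudometric induced on the index set $\mathcal{F}$, i.e.\ the representative of $\mathcal{G}_F(\boldsymbol\xi)$ is taken to be the particular $F$ under consideration; with that reading the identity $d(\mathcal{G}_F(\boldsymbol\xi), \mathcal{G}_{F'}(\boldsymbol\xi)) = c\|F-F'\|_\infty$ holds by definition and the argument above goes through verbatim. Equivalently, one may carry out Section~\ref{sec:equi} on the quotient of $\mathcal{F}$ by the relation $F \sim F' \iff \mathcal{G}_F(\boldsymbol\xi) = \mathcal{G}_{F'}(\boldsymbol\xi)$ — legitimate because the random fields $\mathcal{Y}_F(\mathbf{Z})$ depend on $F$ only through $\mathcal{G}_F(\mathbf{Z})$ — and none of the Gaussian-complexity estimates are affected. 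Everything else is a one-line unwinding of definitions and involves no analysis.
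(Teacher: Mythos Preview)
Your proposal is correct and follows essentially the same route as the paper: pull the cover back through $F\mapsto\mathcal{G}_F(\boldsymbol\xi)$ to define $B_i'=\{F\in\mathcal F:\mathcal{G}_F(\boldsymbol\xi)\in B_i\}$, then use the identity $d(\mathcal{G}_F(\boldsymbol\xi),\mathcal{G}_{F'}(\boldsymbol\xi))=c\|F-F'\|_\infty$ to convert the $\epsilon$-radius into $\epsilon'=\epsilon/c$. The paper phrases the radius estimate via a fixed center $F_{y_i}$ rather than diameter, but this is the same computation; your additional remarks on the injectivity/well-definedness of $a\mapsto F_a$ and on the reverse inequality are welcome clarifications that the paper leaves implicit.
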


\begin{proof}{Proof}
By definition, $B_i = \{y \in \mathcal{B} : d(y_i,y) \leq \epsilon\}$ for some $y_i \in \mathcal{B}$. Consider the set $\{F \in \mathcal F : \mathcal{G}_F( \boldsymbol\xi )\in B_i\} =: \tilde{B}_i$. For any $F \in \tilde{B}_i$, we have
 \(   d(y_i, \mathcal{G}_F( \boldsymbol\xi )) = \frac{1}{\sqrt{N}} \|K_{\mathbf{Z}}\|_{p} \|F_{y_i} - F\|_{\infty} \leq \epsilon.
 \)
It follows that
\(
\|F_{y_i} - F\|_{\infty} \leq \frac{\epsilon \sqrt{N}}{\|K_{\mathbf{Z}}\|_{p}} = \epsilon'.
\)

Now, let $F' \in \mathcal{F}\setminus\cup_{i=1}^l \tilde{B}_i$. It follows that
\(
    \min_{1 \leq i \leq l} \|F_{y_i} - F'\| > \epsilon',
\)
 implying that $d(y_i,\mathcal{G}_{F'}( \boldsymbol\xi )) > \epsilon$. Therefore, $\mathcal{G}_{F'}( \boldsymbol\xi ) \not\in \cup_{i=1}^l B_i$. But, this is a contradiction since $B_1,\cdots,B_l$ is an $\epsilon$-cover of $\mathcal{B}$, implying that $\mathcal{F}\setminus\cup_{i=1}^l \tilde{B}_i = \emptyset$.
 
\end{proof}
\end{document}